\DeclareMathOperator{\Gal}{Gal}
\DeclareMathOperator{\Num}{Num}
\DeclareMathOperator{\Jac}{Jac}
\DeclareMathOperator{\Gram}{Gram}
\def \Q {\mathbb Q}
\def \F {\mathbb F}
\renewcommand{\phi}{\varphi}
\newtheorem{theorem}{Theorem}[section]
\newtheorem*{thm}{Theorem}
\newtheorem*{prop}{Proposition}
\newtheorem{proposition}[theorem]{Proposition}
\newtheorem{corollary}[theorem]{Corollary}
\theoremstyle{definition}
\newtheorem{remark}[theorem]{Remark}
\newtheorem{definition}[theorem]{Definition}
\def\cqfd{
{\hfill
\kern 6pt\penalty 500
\raise -1pt\hbox{\vrule\vbox to 5pt{\hrule width 4pt
\vfill\hrule}\vrule}}
\break}
\font\tengoth=eufm10
\font\sevengoth=eufm7
\font\fivegoth=eufm5
\title[Maximal curves
 with respect to
  quadratic extensions]{Maximal curves with respect to quadratic extensions over
  finite fields
 }
\author[Aubry]{Yves Aubry}
\address[Aubry]{Institut de Math\'ematiques de Toulon - IMATH, Universit\'e de Toulon, France}
\email{yves.aubry@univ-tln.fr}
\address[Aubry]{Institut de Math\'ematiques de Marseille - I2M, Aix Marseille Univ, UMR 7373 CNRS, France}
\email{yves.aubry@univ-amu.fr}
\author[Herbaut]{Fabien Herbaut}
\address[Herbaut]{INSPE Nice-Toulon, Universit\'e C\^ote d'Azur, France}
\address[Herbaut]{Institut de Math\'ematiques de Toulon - IMATH, Universit\'e de Toulon, France}
\email{fabien.herbaut@univ-cotedazur.fr}
\author[Monaldi]{Julien Monaldi}
\address[Monaldi]{Institut de Math\'ematiques de Toulon - IMATH, Universit\'e de Toulon, France}
\email{julien.monaldi@ac-nice.fr}
\begin{document}

\begin{abstract}
We propose a detailed study
of 
a canonical bound
which relates  the numbers of rational points of a curve over a finite field 
with that over its quadratic extension.
Alternative proofs which 
make a connection  
with the variance
enable to 
obtain optimal refinements.

We focus on the curves reaching the bound,
which we call Hallouin-Perret-maximal curves.
We  provide different characterizations 
and stress natural links with the curves which attain the Ihara bound.
As consequences, we establish 
the list of 
such curves with low 
genus
and we 
outline a maximality result which involves the Suzuki curves.

At last
we determine 
which polynomials correspond to the Jacobian of a Hallouin-Perret-maximal curve of genus 2.
\end{abstract}

\date{\today}

\subjclass[2010]{Primary 11G20, 11G25, 11G10; Secondary 14G15, 14H25}

\keywords{Algebraic curves, finite fields, maximal curves, Diophantine stability, Abelian varieties}

\thanks{This work is partially supported by the French Agence Nationale de la Recherche through the Barracuda project under Contract ANR-21-CE39-0009-BARRACUDA}

\maketitle

\section{Introduction}

Throughout the whole paper we consider
an absolutely irreducible smooth projective algebraic curve $X$
(just called curve from now on) 
of genus $g$ and
defined over the finite field ${\mathbb F}_q$.
In the context of estimating the number  $\sharp X({\mathbb F}_{q^n})$ of rational points of $X$ over
${\mathbb F}_{q^n}$, 
we propose a
detailed study of an inequality
implicit in the work of Ihara in \cite{Ihara} and
hightlighted by Hallouin and Perret in \cite{Hallouin-Perret}.
Indeed, this inequality canonically appears in \cite{Hallouin-Perret}
among a series 
of meaningful bounds
obtained as consequences of non-negativity of
a series of Gram determinants.

Let us sketch the method developped in \cite{Hallouin-Perret}.
The Neron-Severi group 
 of the surface $X \times X$
can be quotiented by numerical equivalence
and thus tensorised
to obtain a real vector space $\Num(X \times X)_{\mathbb{R}}$
equipped by the intersection pairing.
As a consequence of the Hodge-index theorem
the intersection pairing is negative definite
 on the vector space orthogonal to the plane 
 generated by the classes
of the horizontal and vertical fibres.
We denote by $\wp$ the orthogonal projection onto this subspace.
For an integer $k$
we thus consider $\gamma_k$  the push-down by $\wp_*$
of the class
of the graph of the $k$-th iterated Frobenius that we normalize by $\sqrt{q}^k$.
The non-negativity of the determinant of the Gram matrix $\Gram(\gamma_0,\gamma_1)$
expresses exactly the Weil inequality, as explained in Subsection 2.2 in \cite{Hallouin-Perret}:
\begin{thm} (Weil, \cite{Weil})
Let $X$ be a curve defined over $\mathbb{F}_q$ of genus $g$. We have
\begin{equation}\label{Weilbound}
\mid \sharp X({\mathbb F}_q) - (q+1) \mid \ \leq 2g\sqrt{q}.
\end{equation}
\end{thm}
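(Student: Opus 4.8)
The plan is to make fully explicit the computation announced in the sentence preceding the statement, i.e.\ to turn the non-negativity of $\det\Gram(\gamma_0,\gamma_1)$ into the Weil inequality. As recalled above, the Hodge index theorem makes the intersection pairing negative definite on the orthogonal complement in $\Num(X\times X)_{\mathbb R}$ of the plane $P$ spanned by the classes of a vertical ruling $v=\{x_0\}\times X$ and a horizontal ruling $h=X\times\{y_0\}$; hence the Gram matrix $\Gram(\gamma_0,\gamma_1)$ of any two classes $\gamma_0,\gamma_1$ lying in that complement is negative semidefinite, so its determinant is non-negative. Everything then reduces to evaluating the three numbers $\gamma_0\cdot\gamma_0$, $\gamma_0\cdot\gamma_1$ and $\gamma_1\cdot\gamma_1$.

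First I would record the intersection data on the surface $X\times X$: $h^2=v^2=0$ and $h\cdot v=1$; for the diagonal $\Delta$ one has $\Delta\cdot h=\Delta\cdot v=1$ and, by adjunction, $\Delta^2=2-2g$ (the self-intersection equals the Euler characteristic of $X$); for the graph $\Gamma_\phi$ of the $q$-power Frobenius $\phi$ one has $\Gamma_\phi\cdot v=1$, $\Gamma_\phi\cdot h=\deg\phi=q$ and $\Delta\cdot\Gamma_\phi=\sharp X(\mathbb{F}_q)$, the last equality because the fixed points of $\phi$ are exactly the $\mathbb{F}_q$-rational points and the intersection is transverse there (since $\mathrm d\phi=0$ the tangent line to $\Gamma_\phi$ is $\{(u,0)\}$, which differs from the tangent line $\{(u,u)\}$ to $\Delta$). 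The one value calling for care is $\Gamma_\phi^2$: as $\Gamma_\phi\cong X$ is a smooth curve on the surface, $\Gamma_\phi^2=\deg N_{\Gamma_\phi/X\times X}$; the normal directions point along the second factor, so this normal bundle is $\phi^{*}T_X$ and $\Gamma_\phi^2=q(2-2g)$ — valid despite $\mathrm d\phi=0$, because the self-intersection of a smooth curve in a smooth surface is always its normal-bundle degree.

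Next I would compute the two projected classes. For a divisor class $D$ with $D\cdot v=a$ and $D\cdot h=b$, the $P$-component of $D$ is $ah+bv$, so $\wp_*(D)=D-(D\cdot v)\,h-(D\cdot h)\,v$; hence $\gamma_0=\Delta-h-v$ and $\gamma_1=\tfrac{1}{\sqrt q}\,(\Gamma_\phi-h-qv)$. Expanding the pairing with the numbers above gives
\[
\gamma_0\cdot\gamma_0=-2g,\qquad \gamma_1\cdot\gamma_1=-2g,\qquad \gamma_0\cdot\gamma_1=\frac{\sharp X(\mathbb{F}_q)-q-1}{\sqrt q},
\]
so that the non-negativity of $\det\Gram(\gamma_0,\gamma_1)$ reads
\[
(2g)^2-\frac{\bigl(\sharp X(\mathbb{F}_q)-q-1\bigr)^2}{q}\;\ge\;0,
\]
which is precisely $\mid \sharp X(\mathbb{F}_q)-(q+1)\mid\ \leq 2g\sqrt q$. (When $g=0$ one has $\gamma_0=\gamma_1=0$ and the bound collapses to the identity $\sharp X(\mathbb{F}_q)=q+1$, as it must for $\mathbb{P}^1$; the same computation recovers Hasse's bound when $g=1$.)

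The main obstacle is not really in this argument: the genuine content has been spent on the Hodge index theorem, which supplies the negative-definiteness and which we take as granted. Granting it, the proof is bookkeeping, and the only points deserving attention are the tally of intersection numbers on $X\times X$ — above all the value $\Gamma_\phi^2=q(2-2g)$ and the transversality of $\Delta\cap\Gamma_\phi$ — together with the harmless normalization by $\sqrt{q}^{\,k}$ built into the definition of the classes $\gamma_k$.
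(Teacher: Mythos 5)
Your computation is correct and follows exactly the route the paper points to: it cites the Weil bound and attributes it to the non-negativity of $\det\Gram(\gamma_0,\gamma_1)$ in the Hallouin--Perret framework, and your intersection-theoretic bookkeeping ($\Delta^2=2-2g$, $\Gamma_\phi^2=q(2-2g)$, transversality at fixed points giving $\Delta\cdot\Gamma_\phi=\sharp X(\mathbb{F}_q)$, then $\gamma_0^2=\gamma_1^2=-2g$ and $\gamma_0\cdot\gamma_1=(\sharp X(\mathbb{F}_q)-q-1)/\sqrt{q}$) correctly fleshes out that argument. No gaps.
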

Next, the non-negativity of the Gram matrix $\Gram(\gamma_0,\gamma_1,\gamma_2)$
together with the arithmetic constraint 
$\sharp X({\mathbb F}_{q^2}) \geq \sharp X({\mathbb F}_q)$ yields to the Ihara bound,
as explained in Subsection 3.3 in \cite{Hallouin-Perret}:
\begin{thm} (Ihara, \cite{Ihara})
Let $X$ be a curve defined over $\mathbb{F}_q$ of genus $g \geq 1$. We have
\begin{equation}\label{Iharabound}
\sharp X({\mathbb F}_q) - (q+1) \ \leq \frac{ \sqrt{(8q+1)g^2+(4q^2-4q)g}-g}{2}.
\end{equation}
\end{thm}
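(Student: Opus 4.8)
The plan is to follow the Hallouin--Perret strategy sketched above: first extract from the non-negativity of the Gram matrix $\Gram(\gamma_0,\gamma_1,\gamma_2)$ a purely numerical relation between $\sharp X(\mathbb{F}_q)$ and $\sharp X(\mathbb{F}_{q^2})$, then feed in the arithmetic inequality $\sharp X(\mathbb{F}_{q^2}) \geq \sharp X(\mathbb{F}_q)$ to eliminate the second quantity, and finally solve the resulting quadratic inequality in $\sharp X(\mathbb{F}_q)$. Concretely, let $\alpha_1,\overline{\alpha_1},\dots,\alpha_g,\overline{\alpha_g}$ be the Frobenius eigenvalues of $X$, write $\alpha_j = \sqrt{q}\,e^{i\theta_j}$, and set $S_1 = \sum_{j=1}^{g}\cos\theta_j$ and $S_2 = \sum_{j=1}^{g}\cos^2\theta_j$. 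From $\sharp X(\mathbb{F}_{q^n}) = q^n + 1 - \sum_{j=1}^{g}(\alpha_j^n + \overline{\alpha_j}^n)$ with $n=1$ and $n=2$ one gets
\[
q + 1 - \sharp X(\mathbb{F}_q) = 2\sqrt{q}\, S_1, \qquad q^2 + 1 - \sharp X(\mathbb{F}_{q^2}) = 4q\, S_2 - 2qg ;
\]
up to normalisation by powers of $\sqrt{q}$ these two quantities are the off-diagonal entries of $\Gram(\gamma_0,\gamma_1,\gamma_2)$, a matrix whose three diagonal entries all equal $2g$.

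The first key step is the inequality $S_1^{2} \leq g\, S_2$. One way to obtain it is the Cauchy--Schwarz inequality applied in $\mathbb{R}^g$ to the vectors $(1,\dots,1)$ and $(\cos\theta_1,\dots,\cos\theta_g)$; equivalently, expanding the condition $\det\Gram(\gamma_0,\gamma_1,\gamma_2) \geq 0$ and using that $q^2 + 1 - \sharp X(\mathbb{F}_{q^2}) = 4qS_2 - 2qg \leq 2qg$ (since each $\cos^2\theta_j \leq 1$), one is left with exactly $S_1^{2} \leq g S_2$. Substituting the two identities above, this becomes
\[
\bigl(q + 1 - \sharp X(\mathbb{F}_q)\bigr)^{2} \ \leq\ g\,\bigl(q^{2} + 1 + 2qg - \sharp X(\mathbb{F}_{q^2})\bigr).
\]

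Next comes the arithmetic input: every $\mathbb{F}_q$-rational point of $X$ is $\mathbb{F}_{q^2}$-rational, hence $\sharp X(\mathbb{F}_{q^2}) \geq \sharp X(\mathbb{F}_q)$, and the right-hand side above only grows when $\sharp X(\mathbb{F}_{q^2})$ is replaced by $\sharp X(\mathbb{F}_q)$. Writing $t = \sharp X(\mathbb{F}_q) - (q+1)$, the inequality turns into $t^{2} \leq g\bigl(q^{2} - q + 2qg - t\bigr)$, that is,
\[
t^{2} + g\,t - g\bigl(q^{2} - q + 2qg\bigr) \ \leq\ 0 .
\]
This quadratic in $t$, whose constant term $-g(q^{2}-q+2qg)$ is negative, has one negative root and one positive root, and $t$ must lie between them; keeping the positive one yields
\[
t \ \leq\ \frac{-g + \sqrt{\,g^{2} + 4g(q^{2}-q+2qg)\,}}{2} \ =\ \frac{\sqrt{(8q+1)g^{2} + (4q^{2}-4q)g}\ -\ g}{2},
\]
which is precisely \eqref{Iharabound}.

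The step I expect to require the most care is orienting the two inequalities correctly. The Gram (equivalently Cauchy--Schwarz) inequality bounds $\bigl(q+1-\sharp X(\mathbb{F}_q)\bigr)^{2}$ by an expression that \emph{decreases} as $\sharp X(\mathbb{F}_{q^2})$ grows, so it is essential both that the arithmetic bound reads $\sharp X(\mathbb{F}_{q^2}) \geq \sharp X(\mathbb{F}_q)$ and that we seek only an upper bound for $\sharp X(\mathbb{F}_q)$; used the other way round the two inequalities say nothing. One should also dispose separately of the degenerate configuration where all Frobenius eigenvalues are real (every $\cos^2\theta_j = 1$), in which $\Gram(\gamma_0,\gamma_1,\gamma_2)$ is singular and the inequality $S_1^{2} \leq g S_2$ has to be read off by hand. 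Everything else is a routine manipulation of a quadratic.
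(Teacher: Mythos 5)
Your proof is correct, and it follows exactly the route the paper attributes to Ihara and to Hallouin--Perret: the Cauchy--Schwarz inequality $S_1^2\leq gS_2$ (equivalently the non-negativity of $\det\Gram(\gamma_0,\gamma_1,\gamma_2)$, i.e.\ inequality~(\ref{DM-bound})), combined with $\sharp X(\mathbb{F}_{q^2})\geq \sharp X(\mathbb{F}_q)$ and the resolution of the resulting quadratic in $t=\sharp X(\mathbb{F}_q)-(q+1)$. The algebra checks out, and the degenerate all-real-eigenvalue case you flag is harmless since your primary argument uses Cauchy--Schwarz directly rather than the Gram determinant.
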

Meanwhile, Hallouin and Perret
 have also noticed that the non-negativity of the  determinant of the Gram matrix 
$\Gram(\gamma_0,\gamma_1,\gamma_2)$ leads to the following inequality:
\begin{thm} (Hallouin and Perret, Proposition 12 in \cite{Hallouin-Perret})
Let $X$ be a curve defined over $\mathbb{F}_q$ of genus $g \geq 1$. We have
\begin{equation}\label{HP-bound}
\sharp X({\mathbb F}_{q^2}) - (q^2+1) \leq 2gq-\frac{1}{g}\bigl(\sharp X({\mathbb F}_{q}) - (q+1)\bigr)^2.
\end{equation}
\end{thm}

This is the inequality we aim to study in our work.
A first interpretation provided in \cite{Hallouin-Perret} is that the inequality (\ref{HP-bound})
improves the Weil bound for $\sharp X({\mathbb F}_{q^2})$
all the more as $\sharp X({\mathbb F}_q)$ is far from $\sharp \mathbb{P}^1({\mathbb F}_q)=q+1$.
Our first 
contribution in this paper
is to provide alternative and elementary proofs of the inequality (\ref{HP-bound}).
In particular we make a link with the statistical variance
of the real parts of the reciprocal roots of the $L$-polynomial of $X$
(that is of the numerator polynomial of the zeta function of $X$).
This way, the inequality (\ref{HP-bound}) appears as a consequence of the
 positivity
  of the variance.
As another consequence,
we can complete inequality (\ref{HP-bound})
with a 
lower bound.
\begin{thm}(Theorem \ref{delta}, Corollary \ref{intervalle} and Proposition \ref{improve_HP})
Let $X$ be a curve of genus $g\geq 2$ defined over ${\mathbb F}_q$.
We denote by $\alpha_1,\ldots, \alpha_g$  the real parts of its Frobenius eigenvalues,
that we consider  as a statistical sample
whose mean is given by
 $\displaystyle{{\overline{\alpha}}:=\frac{1}{g} \sum_{j=1}^g \alpha_j}$ and  whose variance equals
 $\displaystyle{V({\alpha}):=\frac{1}{g} \sum_{j=1}^g \left(\alpha_j-\overline{\alpha} \right)^2}$.
\begin{enumerate}[label=(\roman*)]
\item
The difference between the right hand side and the left hand side of the inequality (\ref{HP-bound})
 is a multiple 
of the variance of the $\alpha_j$'s:
\begin{equation*}
2gq-\frac{1}{g}\bigl(\sharp X({\mathbb F}_{q}) - (q+1)\bigr)^2 - \sharp X({\mathbb F}_{q^2}) + (q^2+1) = 4g V({\alpha}).
\end{equation*}
\item 
We deduce
\begin{equation*}
\left|  \# X(\mathbb{F}_{q^2}) - (q^2+1) + \frac{1}{g} \left( \# X(\mathbb{F}_q) -(q+1) \right)^2 \right|  \leq 2qg.
\end{equation*}
\item If $g$ is odd then we have
\begin{equation*} 
-2q \left(g-\frac{2}{g} \right) \leq  \# X(\mathbb{F}_{q^2}) - (q^2+1) + \frac{1}{g} \left( \# X(\mathbb{F}_q) -(q+1) \right)^2  .
\end{equation*} 
\end{enumerate}
\end{thm}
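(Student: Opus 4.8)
\emph{Proof strategy.} The plan is to rewrite all three statements in terms of the real parts $\alpha_i$ and then to reduce them to elementary bounds on the variance $V(\alpha)$. First I would record the two basic translations. Since the Frobenius eigenvalues split into conjugate pairs $\omega_i,\overline{\omega_i}$ with $\omega_i\overline{\omega_i}=q$, one has $\omega_i+\overline{\omega_i}=2\alpha_i$ and $\omega_i^{2}+\overline{\omega_i}^{2}=4\alpha_i^{2}-2q$; substituting into $\#X(\mathbb F_{q^n})=q^n+1-\sum_j\omega_j^{n}$ gives
\begin{equation*}
\#X(\mathbb F_q)-(q+1)=-2\sum_{i=1}^{g}\alpha_i=-2g\,E(\alpha),\qquad
\#X(\mathbb F_{q^2})-(q^2+1)=2gq-4\sum_{i=1}^{g}\alpha_i^{2}.
\end{equation*}
Combining this with $\sum_i\alpha_i^2=g\bigl(V(\alpha)+E(\alpha)^2\bigr)$, which is just the definition of the variance rearranged, item (i) becomes a one-line substitution: on both sides of (\ref{DM-bound}) a term $4gE(\alpha)^2$ appears and cancels, leaving exactly $4gV(\alpha)$.

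For (ii), the same computation shows that the quantity inside the absolute value equals $2gq-4gV(\alpha)$, so the statement is equivalent to $0\le V(\alpha)\le q$. The inequality $V(\alpha)\ge 0$ is nothing but the Hallouin--Perret bound (\ref{DM-bound}) re-read through (i); the inequality $V(\alpha)\le q$ follows from $V(\alpha)\le\tfrac1g\sum_i\alpha_i^2\le q$, using $|\alpha_i|\le\sqrt q$.

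It remains to prove (iii), which amounts to improving the upper bound to $V(\alpha)\le q\bigl(1-1/g^{2}\bigr)$ when $g$ is odd. Viewed as a function of a single coordinate $\alpha_j$ with the others fixed, $V(\alpha)$ has the coefficient $\tfrac{g-1}{g^{2}}>0$ in front of $\alpha_j^{2}$, hence is convex in each variable; therefore $V(\alpha)$ attains its maximum over the cube $[-\sqrt q,\sqrt q]^{g}$ at a vertex. At a vertex having $k$ coordinates equal to $\sqrt q$ and $g-k$ equal to $-\sqrt q$ one computes $V(\alpha)=q\bigl(1-(2k-g)^{2}/g^{2}\bigr)$, and when $g$ is odd the integer $2k-g$ is odd, so $|2k-g|\ge 1$ and thus $V(\alpha)\le q\bigl(1-1/g^{2}\bigr)$. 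Feeding this back into the identity of (ii) gives $2gq-4gV(\alpha)\ge 2gq-4gq\bigl(1-1/g^{2}\bigr)=-2q\bigl(g-\tfrac2g\bigr)$, which is (iii).

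The only genuinely non-formal point is this last maximisation of the variance, and even there the argument reduces to the convexity remark together with the parity of $2k-g$; everything else is bookkeeping with the explicit formulas for $\#X(\mathbb F_q)$ and $\#X(\mathbb F_{q^2})$.
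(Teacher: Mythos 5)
Your proof is correct, and for items (i) and (ii) it follows essentially the same route as the paper: translate $\#X(\mathbb F_q)-(q+1)=-2\sum_i\alpha_i$ and $\#X(\mathbb F_{q^2})-(q^2+1)=2gq-4\sum_i\alpha_i^2$, observe that the discrepancy in (\ref{DM-bound}) is $4gV(\alpha)$, and then bound $0\le V(\alpha)\le q$ via $V\le E(\alpha^2)\le q$ and $|\alpha_i|\le\sqrt q$. (Minor quibble: $V(\alpha)\ge 0$ is immediate from its definition as a mean of squares, so you need not invoke the Hallouin--Perret bound there; in the paper the logic runs the other way, with $V\ge 0$ giving an elementary reproof of (\ref{DM-bound}).) The genuine difference is in (iii): the paper simply cites a lemma of Kaiblinger and Spangl giving $V(\alpha)\le\frac14\bigl(1-\frac1{g^2}\bigr)(\alpha_{\max}-\alpha_{\min})^2$ for odd sample size, whereas you prove the needed inequality $V(\alpha)\le q\bigl(1-\frac1{g^2}\bigr)$ from scratch: $V$ is strictly convex in each coordinate (coefficient $\frac{g-1}{g^2}>0$ of $\alpha_j^2$), so its maximum over the cube $[-\sqrt q,\sqrt q]^g$ is attained at a vertex, where $V=q\bigl(1-\frac{(2k-g)^2}{g^2}\bigr)$, and parity of $2k-g$ for odd $g$ forces $|2k-g|\ge 1$. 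This makes the argument self-contained and, as a bonus, your vertex computation recovers the equality analysis (equality exactly when $|2k-g|=1$, i.e.\ $(g\pm1)/2$ of the $\alpha_j$ equal $-\sqrt q$ and the rest $\sqrt q$), which the paper obtains from the same cited lemma; the citation, on the other hand, gives the slightly sharper statement in terms of the actual range $\alpha_{\max}-\alpha_{\min}$ rather than the full interval $[-\sqrt q,\sqrt q]$.
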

Let us now stress a link with a 
series of inequalities 
involving the  coefficients $a_1$ and $a_2$
 of a $q$-Weil polynomial
 $T^{2g}+a_1T^{2g-1}+a_2T^{2g-2}+\cdots+a_2 q^{g-2} T^{2} + a_1 q^{g-1} T+q^g $.
We recall that 
to be a $q$-Weil polynomial 
is a well-known necessary condition
to be the characteristic polynomial of an abelian variety defined over $\mathbb{F}_q$
as explained in Section \ref{subsection:qWeil}.

When  $g=2$
(respectively $g=3$, $g=4$, $g=5$)
R\"uck 
(respectively Haloui, Haloui and Singh, Sohn)
have stated an upper bound on the coefficient $a_2$
in \cite{Ruck} 
(respectively \cite{Haloui_JNT},
 \cite{Haloui-Singh},
 \cite{Sohn}).
One can also consult the work of Marseglia (see \cite{Marseglia})
for a survey and recent progresses on the subject. 
Actually, 
computations naturally related to the inequality (\ref{HP-bound})
lead to the following generalization of these four upper bounds for any value of $g \geq 2$
as well as to a lower bound on $a_2$.

\begin{thm}(Theorem \ref{th:bound_a_2} and Proposition \ref{somme_des_a_i})
We consider $g \geq 2$.
\begin{enumerate}[label=(\roman*)]
\item If $T^{2g}+a_1T^{2g-1}+a_2T^{2g-2}+\cdots+a_2 q^{g-2} T^{2} + a_1 q^{g-1} T+q^g $
is a $q$-Weil polynomial then
\begin{equation}\label{Borne_sur_a_2}
a_2 \leq \frac{a_1^2(g-1)}{2g} +gq \text{ and}
\end{equation}
\begin{equation}\label{equation:lower_bound_a2_intro}
a_2 \geq
\begin{cases}
\frac{g-1}{2g} a_1^2 -gq  &\  \text{when }  g \geq 2 \text{ is even}, \\
\frac{g-1}{2g} a_1^2 +\frac{(2-g^2)}{g}q  & \text{ when }  g  \geq 3 \text{ is odd}.
\end{cases}
\end{equation}
\item Let $X$ be a curve defined over ${\mathbb F}_q$ of genus $g\geq 2$ 
and
$L_X(T)=1+a_1T+a_2T^2+\cdots+a_2 q^{g-2} T^{2g-2} + a_1 q^{g-1} T^{2g-1}+q^g T^{2g}$
be its $L$-polynomial.
Then the inequalities (\ref{Borne_sur_a_2}) 
and
(\ref{equation:lower_bound_a2_intro}) obviously hold true. \\

\noindent Moreover the bound (\ref{Borne_sur_a_2}) is reached
 if and only (\ref{HP-bound}) is  an equality.
\end{enumerate}
\end{thm}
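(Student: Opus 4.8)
The plan is to rewrite both bounds on $a_2$ purely in terms of the real parts $\alpha_1,\dots,\alpha_g$ of the Frobenius eigenvalues, and then to reduce everything to elementary bounds on the variance $V(\alpha)$.

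First I would record the algebraic identity underlying the whole statement. Writing the $2g$ roots of the $q$-Weil polynomial as $\omega_1,q/\omega_1,\dots,\omega_g,q/\omega_g$ with $|\omega_j|=\sqrt q$, and setting $\alpha_j=\mathrm{Re}(\omega_j)=\tfrac12(\omega_j+q/\omega_j)\in[-\sqrt q,\sqrt q]$, Vieta's formulas give $a_1=-\sum_{j=1}^{2g}(\text{roots})=-2\sum_{j=1}^{g}\alpha_j$ and $a_2=e_2(\text{roots})=\tfrac12\bigl(a_1^2-p_2\bigr)$ with $p_2=\sum(\text{roots})^2=\sum_{j=1}^{g}(\omega_j^2+q^2\omega_j^{-2})=4\sum_{j=1}^g\alpha_j^2-2gq$, using $\omega_j^2+\overline{\omega_j}^2=2(2\alpha_j^2-q)$. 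Substituting $\sum\alpha_j^2=gV(\alpha)+\tfrac1g(\sum\alpha_j)^2$ and $(\sum\alpha_j)^2=a_1^2/4$ then yields
\[
a_2=\frac{g-1}{2g}\,a_1^2+gq-2g\,V(\alpha).
\]
(This is the algebraic shadow of Theorem~\ref{delta}(i): comparing, the right side minus the left side of (\ref{DM-bound}) equals $4gV(\alpha)$.) Since $V(\alpha)\ge 0$, the upper bound (\ref{Borne_sur_a_2}) follows at once, with equality precisely when $V(\alpha)=0$.

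For the lower bound (\ref{equation:lower_bound_a2_intro}) it suffices, by the displayed identity, to prove $V(\alpha)\le q$ when $g$ is even and $V(\alpha)\le q\bigl(1-\tfrac1{g^2}\bigr)$ when $g$ is odd, since substituting these gives exactly the two asserted inequalities. I would view $V$ as a function on the box $[-\sqrt q,\sqrt q]^g$; as a function of a single variable $\alpha_j$ (the others fixed) it is the upward parabola $\tfrac{g-1}{g^2}\alpha_j^2+(\text{affine})$, so for $g\ge 2$ its maximum on $[-\sqrt q,\sqrt q]$ is attained at an endpoint. Sweeping through the coordinates one at a time shows that the maximum of $V$ over the box is attained at a vertex, i.e. when $k$ of the $\alpha_j$ equal $-\sqrt q$ and $g-k$ equal $\sqrt q$. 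At such a point $\sum\alpha_j^2=gq$, hence $V(\alpha)=q-E(\alpha)^2=q-\tfrac{(g-2k)^2}{g^2}q$, and minimizing $(g-2k)^2$ over $k\in\{0,\dots,g\}$ gives $0$ for $g$ even and $1$ for $g$ odd (since then $g-2k$ is odd), which is exactly the pair of variance bounds needed.

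Finally, for part (ii): the reciprocal polynomial $T^{2g}L_X(1/T)=T^{2g}+a_1T^{2g-1}+\cdots+q^g$ is the characteristic polynomial of the Frobenius endomorphism of $\Jac(X)$, hence a $q$-Weil polynomial, so (\ref{Borne_sur_a_2}) and (\ref{equation:lower_bound_a2_intro}) are immediate from part (i). For the last assertion, equality in (\ref{Borne_sur_a_2}) is equivalent to $V(\alpha)=0$ by the displayed identity; by Theorem~\ref{delta}(i) the difference between the right and left sides of (\ref{DM-bound}) equals $4gV(\alpha)$, and since $g\ge 2$ this vanishes if and only if $V(\alpha)=0$, i.e. if and only if (\ref{DM-bound}) is an equality. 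The main obstacle is the variance optimization in the odd-genus case — obtaining the sharp constant $q(1-1/g^2)$ rather than merely the Popoviciu-type bound $q$ — which is precisely what the coordinate-wise reduction to vertices, combined with the parity of $g-2k$, resolves.
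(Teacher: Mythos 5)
Your proof is correct, and for the identity and the upper bound it follows essentially the paper's route: you derive $a_2=\frac{g-1}{2g}a_1^2+gq-2gV(\alpha)$ (the paper's relation $2ga_2+\delta_{DM}=(g-1)a_1^2+2g^2q$ with $\delta_{DM}=4g^2V(\alpha)$, obtained in Proposition \ref{somme_des_a_i} from Theorem \ref{delta}), then use $V(\alpha)\geq 0$ for (\ref{Borne_sur_a_2}) and note that equality means $V(\alpha)=0$, i.e.\ equality in (\ref{DM-bound}); part (ii) is handled exactly as in the paper via the characteristic polynomial of $\Jac(X)$. Where you genuinely diverge is the lower bound (\ref{equation:lower_bound_a2_intro}): the paper imports the sharp variance inequalities $V(\alpha)\leq\frac14(\alpha_{\max}-\alpha_{\min})^2$ and, for odd sample size, $V(\alpha)\leq\frac14(1-\frac1{g^2})(\alpha_{\max}-\alpha_{\min})^2$ from Kaiblinger--Spangl \cite{KaiSpan} (via Theorem \ref{improve_HP}), whereas you prove them from scratch by observing that $V$ is convex in each coordinate on the box $[-\sqrt q,\sqrt q]^g$, hence maximized at a vertex, and then computing $V=q-\frac{(g-2k)^2}{g^2}q$ at a vertex with $k$ coordinates equal to $-\sqrt q$; the parity of $g-2k$ yields the two constants. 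This makes the argument self-contained and also recovers the equality configurations stated in Theorem \ref{improve_HP}, at the modest cost of redoing a known statistical lemma; it also applies uniformly to arbitrary $q$-Weil polynomials, which the paper handles by remarking that the same statistical bounds apply to any sample in $[-\sqrt q,\sqrt q]$.
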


The case of equality in (\ref{HP-bound})
will thus receive special attention in our work.
Let us point out another meaningful motivation
to study this  case of equality.
One could relate with the 
words of Serre on page 96 in \cite{Serre}:
``it would be natural for curves to ask for many points, 
not only over ${\mathbb F}_q$,
 but also over several given extensions of ${\mathbb F}_q$''.
 Indeed, 
if we fix the values of $g$, $q$ and $N_1$,
 among the curves of genus $g>0$ with 
 $N_1$ points
 over $\mathbb{F}_q$,
 those which reach equality in (\ref{HP-bound})
have the largest number of points over ${\mathbb F}_{q^2}$.

\begin{definition}
A curve $X$ defined over ${\mathbb F}_q$ of genus $g>0$ is said to be a Hallouin-Perret-maximal curve (or HP-maximal curve for short) if
\begin{equation}\label{HP-equality}
\sharp X({\mathbb F}_{q^2}) - (q^2+1) = 2gq-\frac{1}{g}\bigl(\sharp X({\mathbb F}_{q}) - (q+1)\bigr)^2.
\end{equation}
\end{definition}

We will thus provide different characterizations 
of HP-maximal curves.
These characterizations will  prove useful
to identify families of HP-maximal curves
such as the Weil-maximal or Weil-minimal curves (see Example \ref{Weil-max-min}),
to establish the stability of 
the notion of HP-maximality
by coverings (see Proposition \ref{prop:coverings})
or to obtain an upper bound of the genus of a HP-maximal curve
depending only on $q$ (see Proposition \ref{prop:bound_g}).

\begin{prop}(Proposition \ref{Alpha} and Proposition \ref{prop:carac_geom})
Let $X$ be a curve of genus $g \geq 1$ defined over ${\mathbb F}_q$.
We denote by $\alpha_1,\ldots, \alpha_g$
the real parts of the reciprocal roots of the $L$-polynomial of $X$.
The following assertions are equivalent
\begin{enumerate}[label=(\roman*)]
\item  $X$  is a Hallouin-Perret-maximal curve, 
\item all the $\alpha_j$'s are equal, that is 
the zeta function of $X$ is of the form
$$Z_X(T)=\frac{(1-2\alpha T +qT^2)^g}{(1-T)(1-qT)}$$
(in this case, $2\alpha$ is an integer and we have $2\alpha=\frac{q+1-\sharp X({\mathbb F}_q)}{g}$),
\item the number of rational points of the Jacobian $\Jac(X)$
 of $X$ attains the upper bound (\ref{Bound_AHL}) given in \cite{AHL} 
$$\sharp \Jac(X)({\mathbb F}_q)= (q+1+\tau_{\Jac(X)}/g)^g$$
where $\tau_{\Jac(X)}$ stands for the opposite of the trace of the Jacobian of $X$, that is
 $
 \displaystyle{
 \tau_{\Jac(X)}:=-2\sum_{j=1}^g \alpha_j}$.
\end{enumerate}
\end{prop}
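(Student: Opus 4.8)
The plan is to establish the cycle of equivalences $(i) \Rightarrow (ii) \Rightarrow (iii) \Rightarrow (i)$, with the core work being the passage between the Diophantine-maximality equality and the statement that all the $\alpha_j$'s coincide. First I would write everything in terms of the real parts $\alpha_1, \dots, \alpha_g$ of the reciprocal roots of $L_X$. Pairing each reciprocal root $\omega_j$ with its conjugate $\overline{\omega_j}$ (so $\omega_j + \overline{\omega_j} = 2\alpha_j$ and $\omega_j \overline{\omega_j} = q$), the standard identities give
\begin{equation*}
\sharp X(\mathbb{F}_q) = q + 1 - \sum_{j=1}^g 2\alpha_j, \qquad \sharp X(\mathbb{F}_{q^2}) = q^2 + 1 - \sum_{j=1}^g \bigl( (2\alpha_j)^2 - 2q \bigr),
\end{equation*}
where the second follows from $\omega_j^2 + \overline{\omega_j}^2 = (2\alpha_j)^2 - 2q$. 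Substituting these into the defining equation (\ref{DM-equality}) and simplifying, the DM-condition becomes exactly
\begin{equation*}
\sum_{j=1}^g (2\alpha_j)^2 = \frac{1}{g}\Bigl( \sum_{j=1}^g 2\alpha_j \Bigr)^2,
\end{equation*}
which by the equality case of Cauchy--Schwarz (or equivalently, since this says $V(\alpha) = 0$ in the notation of the Theorem above, using part (i) of that Theorem which identifies the DM-defect as $4gV(\alpha)$) holds if and only if all the $\alpha_j$ are equal to a common value $\alpha$. In that case $2\alpha = \frac{1}{g}\sum 2\alpha_j = \frac{q+1-\sharp X(\mathbb{F}_q)}{g}$ is visibly a rational number, hence an algebraic integer that is rational, hence an ordinary integer; and the zeta function takes the claimed form $Z_X(T) = (1 - 2\alpha T + qT^2)^g / \bigl((1-T)(1-qT)\bigr)$ since $L_X(T) = \prod_{j=1}^g (1 - \omega_j T)(1 - \overline{\omega_j} T) = (1 - 2\alpha T + qT^2)^g$. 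This gives $(i) \Leftrightarrow (ii)$.

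For $(ii) \Leftrightarrow (iii)$ I would invoke the bound from \cite{AHL}: $\sharp \Jac(X)(\mathbb{F}_q) = \prod_{j=1}^g (1 - \omega_j)(1 - \overline{\omega_j}) = \prod_{j=1}^g (q + 1 - 2\alpha_j) \leq \bigl( q + 1 + \tau_{\Jac(X)}/g \bigr)^g$, the inequality being the AM--GM inequality applied to the positive reals $q + 1 - 2\alpha_j$ (positive since $|2\alpha_j| \leq 2\sqrt q < q+1$), whose arithmetic mean is $q + 1 + \tau_{\Jac(X)}/g$ by the definition $\tau_{\Jac(X)} = -2\sum_j \alpha_j$. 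Equality in AM--GM holds precisely when all the factors $q + 1 - 2\alpha_j$ are equal, i.e.\ when all the $\alpha_j$ are equal, which is condition $(ii)$. The parenthetical formula $2\alpha = (q+1-\sharp X(\mathbb{F}_q))/g$ is then just the $n=1$ point count rearranged.

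The main obstacle — such as it is — is bookkeeping: making sure the substitution of the point-count formulas into (\ref{DM-equality}) is carried out correctly so that the DM-condition collapses cleanly to $\sum (2\alpha_j)^2 = \frac1g(\sum 2\alpha_j)^2$, and being careful that this is genuinely an ``if and only if'' rather than just one direction. I would lean on part (i) of the Theorem stated earlier (the identity expressing the DM-defect as $4gV(\alpha)$) to make this step immediate and to avoid redoing the algebra. A minor point requiring a word of justification is the integrality of $2\alpha$: it is a symmetric rational expression in the conjugate algebraic integers $\omega_j$, hence a rational algebraic integer, hence in $\mathbb{Z}$. Everything else is Cauchy--Schwarz / AM--GM and the standard dictionary between zeta functions, $L$-polynomials, and $\sharp \Jac(X)(\mathbb{F}_q)$.
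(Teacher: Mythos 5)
Your proof is correct, and the first half is exactly the paper's argument: the paper proves (i)$\Leftrightarrow$(ii) by substituting $\sharp X(\mathbb{F}_q)=q+1-\sum_j 2\alpha_j$ and $\sharp X(\mathbb{F}_{q^2})=q^2+1-\sum_j\bigl((2\alpha_j)^2-2q\bigr)$ into the DM-equality, reducing it to $g\sum_j\alpha_j^2=\bigl(\sum_j\alpha_j\bigr)^2$, invoking the equality case of Cauchy--Schwarz, and concluding the integrality of $2\alpha$ from it being both rational and a sum of algebraic integers --- precisely your steps (your alternative use of the variance identity $\delta_{DM}=4g^2V(\alpha)$ is also in the paper's Theorem~\ref{delta}). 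Where you diverge is (ii)$\Leftrightarrow$(iii): the paper does not reprove anything here, it simply quotes Theorem~2.1 of \cite{AHL}, which states both the bound $\sharp A(\mathbb{F}_q)\leq(q+1+\tau_A/g)^g$ and that equality holds if and only if the $\alpha_j$'s are all equal, and then combines this with Proposition~\ref{Alpha}. You instead give a self-contained proof of that equality case via $\sharp\Jac(X)(\mathbb{F}_q)=L_X(1)=\prod_{j=1}^g(q+1-2\alpha_j)$ and AM--GM on the positive factors $q+1-2\alpha_j\geq(\sqrt{q}-1)^2>0$, whose arithmetic mean is $q+1+\tau_{\Jac(X)}/g$; equality in AM--GM forces all $\alpha_j$ equal. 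This is essentially how the cited bound is proved in \cite{AHL}, so your route buys independence from the reference (and makes the equality criterion transparent) at the cost of redoing a known computation, while the paper's choice keeps the proposition a two-line corollary of Proposition~\ref{Alpha} plus the cited theorem. No gaps.
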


The inequality (\ref{HP-bound}) 
is strongly linked to the Ihara bound.
For instance, one can guess an elementary proof of (\ref{HP-bound})
in the original proof of Ihara of inequality (\ref{Iharabound})
based upon Cauchy-Schwarz inequality.
So it is quite natural that we can relate 
the curves reaching the two bounds.
To this end,
we recall that we have obviously 
$\sharp X({\mathbb F}_{q^2})\geq \sharp X({\mathbb F}_{q})$. 
We will follow \cite{M-R} 
and we will call
Diophantine-stable curve
(with respect to the extension 
${\mathbb F}_{q^2} / {\mathbb F}_q$)
a curve $X$ such that 
$\sharp X({\mathbb F}_{q^2})= \sharp X({\mathbb F}_{q})$. 

\begin{prop} (Proposition \ref{Ihara-max})
Let $X$ be a curve of genus $g \geq 1$. 
The curve $X$ is Ihara-maximal
(i.e. 
the Ihara inequality (\ref{Iharabound}) becomes an equality)
if and only if
$X$ is both a Hallouin-Perret-maximal curve and a Diophantine-stable curve with respect to the extension 
${\mathbb F}_{q^2} / {\mathbb F}_q$.
\end{prop}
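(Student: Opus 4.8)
The plan is to reverse-engineer the derivation of the Ihara bound (\ref{Iharabound}) carried out in \cite{Hallouin-Perret}: there it is obtained by combining exactly two ingredients, the inequality (\ref{DM-bound}) and the arithmetic constraint $\sharp X(\mathbb{F}_{q^2})\geq \sharp X(\mathbb{F}_q)$, so Ihara-maximality ought to be equivalent to equality in both. Put $t:=\sharp X(\mathbb{F}_q)-(q+1)$, so that $\sharp X(\mathbb{F}_q)-(q^2+1)=t-q(q-1)$. The constraint reads $\sharp X(\mathbb{F}_{q^2})-(q^2+1)\geq t-q(q-1)$, while (\ref{DM-bound}) reads $\sharp X(\mathbb{F}_{q^2})-(q^2+1)\leq 2gq-\frac1g t^2$. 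Chaining these two,
\begin{equation*}
t-q(q-1)\ \leq\ \sharp X(\mathbb{F}_{q^2})-(q^2+1)\ \leq\ 2gq-\frac1g t^2 ,
\end{equation*}
and the outer inequality is equivalent to $P(t)\leq 0$, where $P(T):=T^2+gT-gq(q-1+2g)$ has roots $t_\pm=\frac{-g\pm\sqrt{(8q+1)g^2+(4q^2-4q)g}}{2}$. In particular it forces $t\leq t_+$, which is exactly (\ref{Iharabound}); being Ihara-maximal means $t=t_+$.

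For the direct implication, assume $t=t_+$. As $t_+$ is a root of $P$ we get $\frac1g t^2=q(q-1+2g)-t$, hence $2gq-\frac1g t^2=t-q(q-1)=\sharp X(\mathbb{F}_q)-(q^2+1)$. The two extremities of the displayed chain therefore coincide, so both inequalities in it become equalities: the left-hand one gives $\sharp X(\mathbb{F}_{q^2})=\sharp X(\mathbb{F}_q)$, i.e. $X$ is Diophantine-stable, and the right-hand one is equality in (\ref{DM-bound}), i.e. $X$ is a DM-curve.

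For the converse, assume $X$ is a DM-curve and Diophantine-stable. Equality in (\ref{DM-bound}) gives $\sharp X(\mathbb{F}_{q^2})-(q^2+1)=2gq-\frac1g t^2$, and stability gives $\sharp X(\mathbb{F}_{q^2})=\sharp X(\mathbb{F}_q)$; together these yield $t-q(q-1)=2gq-\frac1g t^2$, that is $P(t)=0$, so $t=t_+$ or $t=t_-$. The only substantial step is to discard the spurious root $t=t_-$ (note $t_-<0$). From $\sharp X(\mathbb{F}_q)=q+1+t\geq 0$ one would need $t_-\geq -(q+1)$, i.e. $\sqrt{(8q+1)g^2+(4q^2-4q)g}\leq 2q+2-g$; the right-hand side must then be nonnegative (otherwise $\sharp X(\mathbb{F}_q)<0$ outright), and squaring reduces the inequality to $g\bigl(2qg+q^2+1\bigr)\leq (q+1)^2$. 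The left-hand side equals $(q+1)^2$ when $g=1$ and is strictly larger (and increasing in $g$) when $g\geq 2$, so necessarily $g=1$ and $\sharp X(\mathbb{F}_q)=0$; but the Weil bound forces $\sharp X(\mathbb{F}_q)\geq q+1-2\sqrt q=(\sqrt q-1)^2>0$, a contradiction. (Alternatively, for $g=1$ the characterization of DM-curves in Proposition~\ref{Alpha} forces the common real part of the Frobenius eigenvalues to be $\frac{q+1-\sharp X(\mathbb{F}_q)}{2}=\frac{q+1}{2}>\sqrt q$, again contradicting Weil.) Hence $t=t_+$ and $X$ is Ihara-maximal.

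I expect this exclusion of $t_-$ to be the only genuinely delicate point; everything else is bookkeeping with the quadratic $P$ and with the pair of inequalities from which (\ref{Iharabound}) was built — which, incidentally, is in keeping with the Cauchy--Schwarz flavour of Ihara's original argument recalled in the excerpt.
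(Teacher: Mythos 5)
Your proof is correct and follows essentially the same route as the paper: the Ihara bound is the chain of the arithmetic constraint $\sharp X(\mathbb{F}_{q^2})\geq \sharp X(\mathbb{F}_q)$ with the bound (\ref{DM-bound}), so equality in the chained bound forces equality in each constituent, and conversely DM plus DS makes $t=\sharp X(\mathbb{F}_q)-(q+1)$ a root of the quadratic $P$. The only notable difference is in presentation and in one point of rigor: the paper's forward implication appeals to the structure of Ihara's original proof (quadratic inequality, $N_1\leq N_2$, Cauchy--Schwarz all becoming equalities), and its converse simply selects ``the non-negative solution'' of the quadratic in $N_1$, which silently discards the root $q+1+t_-$; you instead work directly with the Hallouin--Perret inequality and explicitly rule out $t=t_-$, including the borderline case $g=1$ where $t_-=-(q+1)$ gives $N_1=0$ (nonnegative!) and must be excluded via the Weil bound or Proposition \ref{Alpha}. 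That extra care closes a small gap the paper leaves to the reader; otherwise the two arguments coincide.
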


We will obtain in Section \ref{section:Ihara}
the list of Ihara-maximal curves 
of low genera $g$ and also for low values of $q$.
Precisely, we give in Table \ref{table:ihara_g} the complete list up to isomorphism
of Ihara-maximal curves
when $g\leq 18$,
and in 
Remark \ref{Ihara_max_q_petit}
when $q\leq 13$,
except (in both cases) 
when $g=q=7$.
For these values, 
we know  
that there exists at least one Ihara-maximal curve,
but we do not know if there is unicity. \\

Whereas it is essentially 
a reformulation 
of a result of 
Fuhrmann and Torres (Theorem 2 in \cite{F-T})
we think it is worthwhile to 
highlight the
following theorem which provides
an analogue of a theorem of R\" uck and Stichtenoth 
where the Suzuki curves play the role of Hermitian curves
as explained in Subsection \ref{subsection:analog}.
\begin{thm} (Theorem \ref{theorem:suzuki_maximal}, reformulation of Theorem 2 in \cite{F-T})
We consider $t\geq1$ and $q=2^{2t+1}$. 
Let $X$ be a curve defined over ${\mathbb F}_q$.
Suppose that $X$ has genus $g=\frac{\sqrt{q}(q-1)}{\sqrt{2}}$.
Then
$X$ is Ihara-maximal 
if and only if $X$ is ${\mathbb F}_q$-isomorphic to the Suzuki curve $S$
which is the non-singular model of the curve of
 equation 
$y^q-y=x^{q_0}(x^q-x)$
where $q_0=2^t$.
\end{thm}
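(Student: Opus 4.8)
The plan is to deduce this theorem from the characterization of Ihara-maximal curves as curves that are simultaneously Diophantine-maximal and Diophantine-stable (Proposition \ref{Ihara-max}), combined with the characterization of DM-curves in terms of the zeta function (the Proposition referring to Proposition \ref{Alpha}). First I would observe that the Suzuki curve $S$ of equation $y^q-y=x^{q_0}(x^q-x)$ with $q_0=2^t$, $q=2^{2t+1}$, has genus $g=q_0(q-1)=\frac{\sqrt{q}(q-1)}{\sqrt 2}$ and is well known to be an optimal curve: it has $q^2+1$ rational points over $\mathbb{F}_q$, while its defining field of maximality is $\mathbb{F}_{q^4}$ rather than $\mathbb{F}_{q^2}$. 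Concretely $\sharp S(\mathbb{F}_q)=q^2+1$, so $\sharp S(\mathbb{F}_q)-(q+1)=q^2-q$, and one checks that the Frobenius eigenvalues of $S$ all have the same real part, namely $\alpha=-q_0\sqrt{q}=-\sqrt{2}\,q_0^2$ (equivalently $2\alpha=\frac{q+1-\sharp S(\mathbb{F}_q)}{g}=\frac{-(q^2-q)}{q_0(q-1)}=-2q_0$). By the cited Proposition this makes $S$ a DM-curve; and a direct computation from the zeta function $Z_S(T)=\frac{(1+2q_0 T+qT^2)^g}{(1-T)(1-qT)}$ shows $\sharp S(\mathbb{F}_{q^2})=\sharp S(\mathbb{F}_q)$, so $S$ is also Diophantine-stable. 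Hence $S$ is Ihara-maximal, giving the ``if'' direction once we know such an $S$ exists and is unique up to $\mathbb{F}_q$-isomorphism — but that existence/uniqueness is precisely the content of Theorem 2 of Fuhrmann–Torres \cite{F-T}, which we are invoking.

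For the ``only if'' direction, suppose $X$ has genus $g=\frac{\sqrt{q}(q-1)}{\sqrt 2}=q_0(q-1)$ and is Ihara-maximal. By Proposition \ref{Ihara-max}, $X$ is then both DM and Diophantine-stable, so by the zeta-function characterization all the $\alpha_j$ are equal to a common value $\alpha$ with $2\alpha=\frac{q+1-\sharp X(\mathbb{F}_q)}{g}\in\mathbb{Z}$, and Diophantine-stability forces $\sharp X(\mathbb{F}_{q^2})=\sharp X(\mathbb{F}_q)$. Writing $N_1=\sharp X(\mathbb{F}_q)$ and using $\sharp X(\mathbb{F}_{q^2})=q^2+1-\sum_j(\text{trace of }\mathrm{Frob}^2\text{ on eigenvalue }j)$, the condition $\sharp X(\mathbb{F}_{q^2})=N_1$ together with $\sum \alpha_j=g\alpha$ gives a quadratic relation in $\alpha$ and $N_1$; plugging in $g=q_0(q-1)$ and the Weil/Ihara bound should pin down $N_1=q^2+1$ and $\alpha=-q_0\sqrt q$, i.e. $X$ attains the Ihara bound exactly at the value for which it coincides with the maximality bound $\sharp X(\mathbb{F}_{q^4})=q^4+1$ over the quartic extension. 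At that point $X$ is an $\mathbb{F}_{q^4}$-maximal curve of genus $q_0(q-1)$, and Theorem 2 of \cite{F-T} identifies the unique such curve (up to $\mathbb{F}_{q}$-isomorphism) as the Suzuki curve $S$.

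The main obstacle — and the reason the theorem is phrased as a ``reformulation'' — is that the substantive mathematical content is entirely contained in the Fuhrmann–Torres classification; our job is the bookkeeping translation between three equivalent ``maximality'' conditions: Ihara-maximality, the pair (DM $+$ Diophantine-stable), and $\mathbb{F}_{q^4}$-maximality for a curve of this special genus. The one place requiring genuine care is verifying that the numerical constraints (the value of $g$, integrality of $2\alpha$, the Ihara equality, and Diophantine-stability) are jointly rigid enough to force $\alpha=-q_0\sqrt q$ and $N_1=q^2+1$ rather than some other solution of the quadratic; this is a short but not entirely mechanical computation, since one must rule out the ``other root'' using the Hasse–Weil bound $|N_1-(q+1)|\le 2g\sqrt q$ and the genus formula. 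Once that is done, the appeal to \cite{F-T} closes both directions.

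I would close by remarking (in a sentence after the proof, or in Subsection \ref{subsection:analog}) that this exactly parallels the Rück–Stichtenoth characterization of the Hermitian curve as the unique $\mathbb{F}_{q}$-maximal curve of genus $\frac{q-\sqrt q}{2}\cdot\sqrt q$ — there the roles of ``quadratic extension'' and ``Hermitian'' are played here by ``quartic extension'' and ``Suzuki'', which is the analogy advertised in the introduction.
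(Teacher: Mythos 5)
Your overall strategy is the same as the paper's: reduce Ihara-maximality at genus $g=q_0(q-1)$ to the single condition $\sharp X(\mathbb{F}_q)=q^2+1$, using Proposition \ref{Ihara-max} (Ihara-maximal $=$ DM $+$ DS) and Proposition \ref{Alpha} for the bookkeeping, and then let Theorem 2 of Fuhrmann--Torres carry the substantive content. Three points need attention. First, a numerical slip: the common real part of the Frobenius eigenvalues of the Suzuki curve is $\alpha=-q_0$, not $-q_0\sqrt{q}$ (the latter has absolute value $q/\sqrt{2}>\sqrt{q}$ and would contradict the Riemann Hypothesis); your own parenthetical $2\alpha=-2q_0$ and the $L$-polynomial $(1+2q_0T+qT^2)^g$ are the correct statements, so this is an internal inconsistency to clean up.

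Second, the step you leave as ``should pin down $N_1=q^2+1$'' is exactly the computation on which the whole reformulation rests (the paper's ``tedious but straightforward computation''), and it must be carried out rather than promised: with $g=q_0(q-1)$ and $q=2q_0^2$ one checks
$(8q+1)g^2+4q(q-1)g=q_0^2(q-1)^2\bigl(16q_0^2+8q_0+1\bigr)=\bigl(q_0(q-1)(4q_0+1)\bigr)^2$,
so the Ihara bound equals $q+1+\tfrac{1}{2}\bigl(q_0(q-1)(4q_0+1)-q_0(q-1)\bigr)=q+1+q(q-1)=q^2+1$. Once this identity is in place there is no ``other root'' to rule out: an Ihara-maximal curve has $N_1$ equal to this explicit value, and conversely the Suzuki curve attains it. Third, the detour through $\mathbb{F}_{q^4}$-maximality is unnecessary and imports an extra unproved claim; Theorem 2 of \cite{F-T}, as invoked in the paper, says that any curve of genus $q_0(q-1)$ with $q^2+1$ rational points over $\mathbb{F}_q$ is $\mathbb{F}_q$-isomorphic to the Suzuki curve, so once $N_1=q^2+1$ is established you can apply it directly; likewise, for the ``if'' direction uniqueness is not needed, only the known counts $\sharp S(\mathbb{F}_q)=\sharp S(\mathbb{F}_{q^2})=q^2+1$ (equivalently the stated $L$-polynomial) together with the identity above.
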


At last, it is noteworthy 
that the Jacobian of a HP-maximal curve is a power of a simple abelian variety
(see Proposition \ref{Power_Simple_Abelian_Variety}).
In the same direction,
we will determine 
among the polynomials $(T^2+aT+q)^2 \in \mathbb{Z}[T]$
which ones correspond to the Jacobian of a genus-$2$ HP-maximal curve $X$,
and we will characterize
when the Jacobian of $X$ is simple
or splits into the power of an ordinary or supersingular elliptic curve
(Theorem \ref{HP_curves_of_genus_2}).
This classification work will be the aim of
 Section \ref{section:structure}.
As a consequence we will deduce the following existence result.
\begin{prop}(Proposition \ref{prop:HPexistence})
Over any finite field   there exists a non-elliptic Hallouin-Perret-maximal curve.
\end{prop}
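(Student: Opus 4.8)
The plan is to produce, for every finite field $\mathbb{F}_q$, a DM-curve of genus $2$; since a curve of genus $2$ is not elliptic, this is enough. By Proposition \ref{Alpha}, a curve $X/\mathbb{F}_q$ of genus $2$ is a DM-curve precisely when its $L$-polynomial has the shape $(1-2\alpha T+qT^2)^2$ with $2\alpha\in\mathbb{Z}$; passing to the reversed polynomial, this says exactly that the characteristic polynomial of the Frobenius of $\Jac(X)$ is $(T^2+aT+q)^2$ for some integer $a$ with $a^2\le 4q$. Hence everything reduces to the assertion that the list produced by Theorem \ref{DM_curves_of_genus_2}, which describes precisely which polynomials $(T^2+aT+q)^2$ actually occur as the characteristic polynomial of the Jacobian of a smooth genus-$2$ curve over $\mathbb{F}_q$, is nonempty for every value of $q$.

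To check this I would go through candidates, case by case on $q=p^e$. The most uniform choice is $a=0$, i.e. the polynomial $(T^2+q)^2$: then $\sharp X(\mathbb{F}_q)=q+1$ while $X$ becomes Weil-maximal over $\mathbb{F}_{q^2}$, which is exactly what the DM-equality predicts for $g=2$. When $T^2+q$ fails to be the characteristic polynomial of an elliptic curve over $\mathbb{F}_q$ (a short, explicit list of pairs $(p,e)$ by Waterhouse's theorem), Honda--Tate still provides a \emph{simple} abelian surface with characteristic polynomial $(T^2+q)^2$, and a simple principally polarized abelian surface over $\mathbb{F}_q$ is the Jacobian of a genus-$2$ curve by Weil's criterion together with Torelli; when $T^2+q$ does come from an elliptic curve $E$, one works inside the isogeny class of $E^2$. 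In the residual cases where Theorem \ref{DM_curves_of_genus_2} shows $a=0$ is not realized, I would instead pick a small admissible value, for instance $a=\pm 1$ (so that $\gcd(a,p)=1$, giving an ordinary isogeny class $E^2$), or, over fields of even degree, one of the supersingular values that theorem allows, and read off directly from its statement that the corresponding square polynomial lies on the list.

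The genuine obstacle is not the existence of a suitable Weil polynomial, since $(T^2+aT+q)^2$ is a Weil polynomial for every integer $a$ with $|a|\le 2\sqrt q$, but rather the passage from an isogeny class of abelian surfaces to the Jacobian of an honest \emph{curve}: a priori the relevant isogeny class might contain only products of elliptic curves (which are never Jacobians) or no principally polarized variety at all. Ruling out this product obstruction for the square classes $(T^2+aT+q)^2$ is exactly the work carried out in Section \ref{section:structure} via the criteria of Howe and of Howe--Nart--Ritzenthaler; once that classification is in hand, the nonemptiness of the list for every $q$, hence the existence of a non-elliptic DM-curve over every finite field, follows at once.
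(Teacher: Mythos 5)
There is a genuine gap: your reduction ``it suffices to show the list of Theorem \ref{DM_curves_of_genus_2} is nonempty for every $q$'' fails at $q=2$, and in fact the paper proves that \emph{no} genus-$2$ DM-curve exists over ${\mathbb F}_2$. For $q=2$ the only candidates are $a\in\{0,\pm1,\pm2\}$: the ordinary choice $a=\pm1$ gives $a^2-4q=-7$, which is exactly one of the excluded discriminants in case (2) (the isogeny class of $E\times E$ contains no Jacobian), and the supersingular choices $a=0,\pm2$ are ruled out because case (3.1) requires $n>1$; equivalently, the Maisner--Nart--Howe classification of genus-$2$ curves over ${\mathbb F}_2$ shows no curve has $a_2=a_1^2/4+2q$. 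So a proof that only ever produces genus-$2$ curves cannot work uniformly; the paper handles $q=2$ separately by exhibiting a genus-$3$ DM-curve (a pointless curve over ${\mathbb F}_2$ with $14$ points over ${\mathbb F}_4$, found in LMFDB), and for $q>2$ it argues essentially as you do, with explicit admissible values ($a=2$ in odd characteristic, $a=3$ for $q=2^n$, $n\geq3$, and a table lookup for $q=4$).

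Two secondary points. First, your ``most uniform choice'' $a=0$ also fails for $q=3$ (case (3.2)(i) requires $n\geq3$), so even away from $q=2$ you must fall back on other values of $a$; this is harmless but shows the case analysis cannot be avoided. Second, the claim that Honda--Tate ``still provides a simple abelian surface with characteristic polynomial $(T^2+q)^2$'' whenever $T^2+q$ is not elliptic is not correct as stated: a simple surface with this characteristic polynomial exists only in the restricted situations of case (1) of Theorem \ref{DM_curves_of_genus_2} ($n$ even with congruence conditions on $p$), and even then one still needs the isogeny class to contain a principally polarized variety that is a Jacobian, which is precisely the nontrivial content of the Howe and Howe--Nart--Ritzenthaler criteria rather than a formal consequence of Torelli.
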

\section{Different characterizations of Hallouin-Perret-maximality}
Let $X$ be a curve defined over ${\mathbb F}_q$ of genus $g\geq 1$
and $N_n=\sharp X({\mathbb F}_{q^n})$ be its number of rational points over ${\mathbb F}_{q^n}$.
It is well-known that the zeta function of $X$
$$Z_{X,{\mathbb F}_q}(T)
:=
\exp\left(\sum_{n=1}^{\infty}\frac{N_nT^n}{n}\right)$$
is a rational function
$$Z_{X,{\mathbb F}_q}(T)=\frac{L_X(T)}{(1-T)(1-qT)}$$
where $L_X(T)$, called the $L$-polynomial of $X$, is a polynomial in ${\mathbb Z}[T]$ of degree $2g$.
It has the form
$$L_X(T)=\prod_{j=1}^g(1-\omega_j T)(1-\overline{\omega}_jT)$$
where the inverse roots $\omega_j$ of $L_X(T)$
are algebraic integers
such that $\vert \omega_j\vert = \sqrt{q}$ (the so-called Riemann Hypothesis for curves over finite fields, proved by Weil, see \cite{Weil}).
We also denote by $\alpha_j$ the real part of $\omega_j$, 
so that
 the $L$-polynomial of $X$ can be written 
$$L_X(T)=\prod_{j=1}^g(1-2\alpha_jT+qT^2)$$
where $\vert \alpha_j\vert \leq \sqrt{q}$.

\medskip

Let us recall now the link with the characteristic polynomial of the Jacobian of $X$.
Let $A$ be an abelian variety of dimension $g$ defined over ${\mathbb F}_q$.
The absolute Galois group 
$\Gal({\overline{\mathbb F}_q}/{\mathbb F}_q)$ 
is topologically generated by the automorphism $x\mapsto x^q$ 
and corresponds to
 the so-called
Frobenius endomorphism of $A$.
It induces an endomorphism on the Tate module $T_{\ell}(A)$ of $A$ (where $\ell$ is any prime number distinct from the characteristic of ${\mathbb F}_q$).
 The characteristic polynomial $f_A$ of $A$ 
 is defined as the characteristic polynomial of the Frobenius
  seen as an endomorphism on the ${\mathbb Q}_{\ell}$-vector space
   $T_{\ell}(A)\otimes_{{\mathbb Z}_{\ell}} {\mathbb Q}_{\ell}$.
It is a classic result that 
$f_A$ is a monic polynomial of degree $2g$ 
with integer coefficients whose roots 
have all modulus $\sqrt{q}$ (the Riemann Hypothesis for abelian varieties
 also proved by Weil, see \cite{Weil}).

It is also a standard result that 
the $L$-polynomial 
 of 
$X$ is 
the reciprocal polynomial of the characteristic polynomial 
of the Jacobian $\Jac(X)$ of $X$:

$$f_{\Jac(X)}(T)=T^{2g}L_X(1/T)=\prod_{j=1}^{2g}(T-\omega_j)(T-\overline{\omega}_j)=
\prod_{j=1}^g(T^2-2\alpha_j T+q).$$
 So we will sometimes refer to the $\omega_j$'s as  
 the Frobenius eigenvalues of  $X$.

\subsection{Characterization by the zeta function}
An important point in our work is the following proposition
which characterizes the HP-maximal curves
in terms of the real parts
of their Frobenius eigenvalues.
\begin{proposition}\label{Alpha}
Let $X$ be a curve of genus $g$ defined over ${\mathbb F}_q$.
We denote by $\alpha_1,\ldots, \alpha_g$
the real parts of
its  Frobenius eigenvalues.
The curve $X$  is a Hallouin-Perret-maximal curve 
if and only if  $\alpha_1=\cdots=\alpha_g$,
that is
if and only if
its zeta function is of the form
$$Z_X(T)=\frac{(1-2\alpha T +qT^2)^g}{(1-T)(1-qT)}$$
where $\alpha$ is the common value of all the $\alpha_j$.
In this case, $2\alpha$ is an integer and we have $2\alpha=\frac{q+1-\sharp X({\mathbb F}_q)}{g}$.   
\end{proposition}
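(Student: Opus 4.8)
The plan is to express everything in terms of the real parts $\alpha_1,\dots,\alpha_g$ of the Frobenius eigenvalues and then invoke a variance computation. First I would record the two standard point-count formulas: $\sharp X(\mathbb{F}_q) = q+1 - \sum_{j=1}^g(\omega_j+\overline{\omega}_j) = q+1 - 2\sum_{j=1}^g \alpha_j$, and $\sharp X(\mathbb{F}_{q^2}) = q^2+1 - \sum_{j=1}^g(\omega_j^2+\overline{\omega}_j^2)$. Since $|\omega_j| = \sqrt q$, writing $\omega_j = \alpha_j + i\beta_j$ with $\alpha_j^2+\beta_j^2 = q$ gives $\omega_j^2+\overline{\omega}_j^2 = 2(\alpha_j^2-\beta_j^2) = 4\alpha_j^2 - 2q$. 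Hence
\begin{equation*}
\sharp X(\mathbb{F}_{q^2}) - (q^2+1) = -\sum_{j=1}^g(4\alpha_j^2 - 2q) = 2gq - 4\sum_{j=1}^g \alpha_j^2,
\end{equation*}
while $\bigl(\sharp X(\mathbb{F}_q) - (q+1)\bigr)^2 = 4\bigl(\sum_{j=1}^g \alpha_j\bigr)^2$.

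Next I would substitute these into the defining equality \eqref{DM-equality}. The difference between the right-hand side and left-hand side of \eqref{DM-bound} becomes
\begin{equation*}
\Bigl(2gq - \tfrac{4}{g}\bigl(\textstyle\sum_j \alpha_j\bigr)^2\Bigr) - \Bigl(2gq - 4\textstyle\sum_j \alpha_j^2\Bigr) = 4\sum_{j=1}^g \alpha_j^2 - \frac{4}{g}\Bigl(\sum_{j=1}^g \alpha_j\Bigr)^2 = 4g\,V(\alpha),
\end{equation*}
using the elementary identity $V(\alpha) = \frac{1}{g}\sum_j \alpha_j^2 - E(\alpha)^2$. Since $V(\alpha)\geq 0$ always, this both reproves \eqref{DM-bound} and shows that equality \eqref{DM-equality} holds if and only if $V(\alpha)=0$, i.e.\ if and only if $\alpha_1=\cdots=\alpha_g$. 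Conversely, if all $\alpha_j$ are equal to a common value $\alpha$, then each factor $1-2\alpha_j T+qT^2$ equals $1-2\alpha T+qT^2$, so $L_X(T) = (1-2\alpha T+qT^2)^g$ and the stated form of $Z_X(T)$ follows immediately.

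It remains to verify the final assertion that $2\alpha$ is an integer with $2\alpha = \frac{q+1-\sharp X(\mathbb{F}_q)}{g}$. The value of $\sum_j 2\alpha_j = q+1-\sharp X(\mathbb{F}_q)$ is an integer; when all $\alpha_j$ agree this sum is $2g\alpha$, so $2\alpha = \frac{q+1-\sharp X(\mathbb{F}_q)}{g}$ is rational. To see it is an integer, note that $2\alpha$ is the coefficient (up to sign) appearing in $L_X(T) = (1-2\alpha T+qT^2)^g$, and since $L_X\in\mathbb{Z}[T]$, its degree-one coefficient $-2g\alpha$ is an integer while its degree-two coefficient $\binom{g}{2}(2\alpha)^2 + gq$ is also an integer; combining these forces $2\alpha\in\mathbb{Z}$ (for instance the $T$-coefficient of $L_X$ is $-\sum_j 2\alpha_j = -2g\alpha \in \mathbb Z$, and one checks $2\alpha$ is an algebraic integer — being $\omega_j+\overline\omega_j$ — that is rational, hence a rational integer). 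I expect the only mildly delicate point to be this integrality argument; everything else is a direct substitution and the one-line variance identity.
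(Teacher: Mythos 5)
Your proof is correct and follows essentially the same route as the paper's: the same point-count identities reduce the defining equality to $g\sum_j\alpha_j^2=\bigl(\sum_j\alpha_j\bigr)^2$, which you read as vanishing of the variance while the paper reads it as the equality case of Cauchy--Schwarz, and both arguments conclude $2\alpha\in\mathbb{Z}$ by the same rational-plus-algebraic-integer observation. One small caveat: your aside about combining the $T$- and $T^2$-coefficients of $L_X$ does not by itself force $2\alpha\in\mathbb{Z}$ (for instance $g=8$, $2\alpha=\tfrac{1}{2}$ satisfies both integrality constraints), but the algebraic-integer argument you fall back on is exactly the paper's and suffices.
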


\begin{proof}
It is well-known that we have:
$$\sharp X({\mathbb F}_q)=q+1-\sum_{j=1}^g (\omega_j + {\overline \omega_j})\ \ 
{\rm and}
\ \ \sharp X({\mathbb F}_{q^2})=q^2+1-\sum_{j=1}^g (\omega_j^2 + {\overline \omega_j}^2).$$
Saying that $X$ is a HP-maximal curve is thus equivalent to saying that 
$$-\sum_{j=1}^g (\omega_j^2 + {\overline \omega_j}^2)=2gq-\frac{1}{g}\bigl(-\sum_{j=1}^g (\omega_j + {\overline \omega_j})\bigr)^2.$$
But $\omega_j + {\overline \omega_j}=2\alpha_j$, so
$\omega_j^2 + {\overline \omega_j}^2=4\alpha_j^2-2q$ and thus the equality comes down to
$g\sum_{j=1}^g\alpha_j^2=\bigl(\sum_{j=1}^g \alpha_j\bigr)^2.$
But Cauchy-Schwarz equality holds if and only if
all the $\alpha_j$'s are equal,
that is if and only if $L_X$
has the claimed form.

 If $\alpha$ denotes the common value of the $\alpha_j$'s,
 by the relation above we get
$\alpha=\frac{q+1-\sharp X({\mathbb F}_q)}{2g}$.
Therefore
$2 \alpha$ is both a rational number and
an algebraic integer,
because it is the sum of the algebraic integers $\omega_1$ and $\overline{\omega}_1$.
We deduce that $2 \alpha$ is an integer.
\end{proof}

\begin{remark}\label{remark:elliptic}
It is straighforward that any elliptic curve $E$ over ${\mathbb F}_q$ is a HP-maximal curve since $N_1:= \sharp E({\mathbb F}_q)=q+1-(\omega+\overline{\omega})$ and
$N_2:=\sharp E({\mathbb F}_{q^2})=q^2+1-(\omega^2+\overline{\omega}^2)$ 
and thus
$(N_1-(q+1))^2=(\omega+\overline{\omega})^2
=q^2+1+2q-N_2$.
Meanwhile the characterization of Proposition \ref{Alpha} gives again immediately that any elliptic curve is a HP-maximal curve.
\end{remark}

\begin{remark}\label{Weil-max-min}
As a consequence of the previous proposition, 
a Weil-maximal curve
 i.e. a curve which reaches the  Weil upper bound
(respectively a Weil-minimal curve  i.e. a curve which reaches the  Weil lower bound)
 over ${\mathbb F}_q$ 
is 
a HP-maximal curve.
Indeed, if $X$ is a Weil-maximal curve over ${\mathbb F}_q$ of genus $g$ then 
$\sharp X({\mathbb F}_q)=q+1+2g\sqrt{q}$ and $q$ must be a square. 
If we still denote by $\omega_1, \overline{\omega}_1,\ldots,\omega_g,\overline{\omega}_g$ its Frobenius eigenvalues, then we can write $\sharp X({\mathbb F}_q)=q+1-\sum_{j=1}^g (\omega_j + {\overline \omega_j})$ and we thus obtain
that $\sum_{j=1}^g (\omega_j + {\overline \omega_j})=-2g\sqrt{q}$.
But the Riemann Hypothesis says that $\vert \omega_j\vert=\sqrt{q}$ for any $j=1,\ldots,g$ which implies that all the $\omega_j$ are equal to $-\sqrt{q}$
 and then $X$ is a HP-maximal curve.
In the same way, if $X$ is Weil-minimal then all the $\omega_j$ are equal to $\sqrt{q}$ and then $X$ is also a HP-maximal curve.
\end{remark}

\begin{remark}\label{2et20}
A HP-maximal curve of genus $g\geq 2$ 
is not necessarily Weil-maximal
nor Weil-minimal:
the curve $X$ of genus 2 defined over ${\mathbb F}_3$ by the equation $y^2=(-1-x-x^3)(1-x+x^3)$ 
verifies $N_1:=\sharp X({\mathbb F}_3)=2$
 and $N_2:=\sharp X({\mathbb F}_9)=20$
 so $X$ is a HP-maximal curve 
  but is neither
   Weil-maximal nor Weil-minimal.
\end{remark}

\subsection{Geometric condition}

\begin{proposition}\label{Power_Simple_Abelian_Variety}
Let $X$ be a curve of genus $g$ defined over ${\mathbb F}_q$.
If $X$ is a Hallouin-Perret-maximal curve then
the Jacobian 
of $X$
 is ${\mathbb F}_q$-isogenous to a power of a ${\mathbb F}_q$-simple abelian variety.
\end{proposition}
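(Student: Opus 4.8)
The plan is to exploit the characterization of DM-curves given in Proposition~\ref{Alpha}: if $X$ is a DM-curve of genus $g$, then all the real parts $\alpha_1,\dots,\alpha_g$ of its Frobenius eigenvalues coincide with a common value $\alpha$, so that the characteristic polynomial of Frobenius factors as
$$f_{\Jac(X)}(T)=\bigl(T^2-2\alpha T+q\bigr)^g.$$
Thus the minimal polynomial of the Frobenius endomorphism $\pi$ acting on $\Jac(X)$ divides $T^2-2\alpha T+q$, which has degree at most $2$.

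First I would recall the general fact (Honda--Tate theory, or simply the structure theory of abelian varieties over finite fields) that if $A$ is an abelian variety over $\mathbb{F}_q$ and $A$ is $\mathbb{F}_q$-isogenous to $\prod_i B_i^{n_i}$ with the $B_i$ pairwise non-isogenous simple factors, then $f_A=\prod_i h_i^{n_i}$ where $h_i$ is the characteristic polynomial of Frobenius on $B_i$, and moreover each $h_i$ is a power of an $\mathbb{F}_q$-irreducible polynomial $m_i$ (the minimal polynomial of the Frobenius of $B_i$); the $m_i$ are the distinct $\mathbb{F}_q$-irreducible factors of $f_A$. The point is therefore that $\Jac(X)$ is a power of a single simple variety precisely when $f_{\Jac(X)}$ is a power of an irreducible polynomial. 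So it suffices to show that $\bigl(T^2-2\alpha T+q\bigr)^g$ is a power of an irreducible polynomial in $\mathbb{Z}[T]$ (equivalently, that $T^2-2\alpha T+q$ is a power of an irreducible polynomial).

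Now I would split into cases according to the factorization of $P(T):=T^2-2\alpha T+q$ over $\mathbb{Q}$. If $P$ is irreducible, we are done: $f_{\Jac(X)}=P^g$ is a power of an irreducible polynomial, so $\Jac(X)$ is isogenous to a power of a simple abelian variety. If $P$ is reducible, then since $2\alpha\in\mathbb{Z}$ it factors as $P(T)=(T-\omega)(T-\overline\omega)$ with $\omega,\overline\omega\in\mathbb{Z}$ (integer roots, being algebraic integers that are rational) and $\omega\overline\omega=q$, $|\omega|=|\overline\omega|=\sqrt q$. Hence either $\omega=\overline\omega$, giving $q$ a square and $P(T)=(T\mp\sqrt q)^2$ --- again a power of an irreducible (linear) polynomial, so $f_{\Jac(X)}=(T\mp\sqrt q)^{2g}$ and $\Jac(X)$ is isogenous to a power of the corresponding supersingular elliptic curve; or $\omega=-\overline\omega$, which forces $q=-\omega^2<0$, impossible. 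Therefore $P$ is always a power of an irreducible polynomial, and in every case $f_{\Jac(X)}$ is a power of an irreducible polynomial, which gives the claim.

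The only mildly delicate point is the appeal to the correspondence between the factorization of the characteristic polynomial of Frobenius and the isogeny decomposition into simple factors; I expect this to be the main thing to pin down carefully, citing Tate's theorem (an abelian variety over $\mathbb{F}_q$ is determined up to isogeny by $f_A$, and $A\sim\prod B_i^{n_i}$ is simple for each $B_i$ iff $f_A$ is a power of an irreducible polynomial, the irreducible factors of $f_A$ being the minimal polynomials of the Frobenii of the simple isogeny factors). Everything else is the elementary case analysis on the quadratic $P(T)$ above, using only that $2\alpha\in\mathbb{Z}$ and $|\alpha|\le\sqrt q$.
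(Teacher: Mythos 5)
Your proposal is correct and follows essentially the same route as the paper: use Proposition~\ref{Alpha} to write $f_{\Jac(X)}=(T^2-2\alpha T+q)^g$, observe that $T^2-2\alpha T+q$ is a power of a $\mathbb{Q}$-irreducible polynomial (the paper notes its discriminant is non-positive by the Riemann Hypothesis, which is just a more compact version of your case analysis on its roots), and conclude via Tate's theorem that an abelian variety is isogenous to a power of a simple one if and only if its characteristic polynomial is a power of an irreducible polynomial.
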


\begin{proof}
Let $f_{\Jac(X)}$ be the characteristic polynomial of the Jacobian of $X$
$$f_{\Jac(X)}=\prod_{j=1}^g(T^2-2\alpha_j T+q)$$
where the $\alpha_j$'s are the real parts of 
 its Frobenius eigenvalues
(and so $\vert \alpha_j\vert \leq \sqrt{q}$ by the Riemann Hypothesis).
By Proposition \ref{Alpha}
we know that if $X$ is a HP-maximal curve then
the $\alpha_j$'s are all equal.
If we denote by $\alpha$ this common value,  
it amounts to saying that $f_{\Jac(X)}$ has the form
$$f_{\Jac(X)}=(T^2-2\alpha T+q)^g.$$

But it is well-known that any abelian variety $A$ over ${\mathbb F}_q$ can factor uniquely, up to ${\mathbb F}_q$-isogeny, into a product of powers of non-${\mathbb F}_q$-isogenous ${\mathbb F}_q$-simple abelian varieties. 
And Tate stated in Theorem 2 (e) in
\cite{Tate} that $A$ is ${\mathbb F}_q$-isogenous to a power of a ${\mathbb F}_q$-simple abelian variety if and only if its characteristic polynomial is a power of a ${\mathbb Q}$-irreducible polynomial.

Since the  reduced discriminant of $T^2-2\alpha T+q$  is equal to $\alpha^2-q$, 
which is non-positive,
$f_{\Jac(X)}$ is  a power of a $\mathbb Q$-irreducible polynomial.
The result follows.
\end{proof}

\begin{remark}
The reciprocal
of Proposition \ref{Power_Simple_Abelian_Variety}
 is false
as we find many counterexamples in the database \cite{LMFDB}.
For instance, one can consider the curve $X$ of genus $2$
defined over $\mathbb{F}_{5}$ 
by the equation $y^2=x^5+3x$.
Its Jacobian is simple
as its characteristic polynomial $1+25x^4$ is irreducible in $\Q[x]$.
But it turns out that
we have $N_1=2$ and $N_2=26$ so $X$ is not a HP-maximal curve.
\end{remark}

\begin{remark}
The Jacobian of a Hallouin-Perret-maximal curve can be a simple abelian variety.
Indeed,
the LMFDB database \cite{LMFDB}
 provides\footnote{\url{https://www.lmfdb.org/Variety/Abelian/Fq/2/49/ao\_fr}}
the equations of many hyperelliptic curves of genus $2$
defined over ${\mathbb F}_{49}$
such that
$N_1=36$ and $N_2=2500$,
for example the curve of equation
$y^2
 =(3a+3)x^6 +(6a+1)x^5 +(2a+2)x^4 +(5a+6)x^3 +(5a+3)x^2 +3x+3a+4$
 where $a$ is a root of the Conway polynomial defining ${\mathbb F}_{49}$.
If we consider such a curve $X$,
one can easily check 
that the equality (\ref{HP-equality}) is verified, 
so $X$ is a HP-maximal curve. 
Moreover,
by Proposition \ref{Alpha} we recover that the $L$-polynomial of $X$ is
$L_X(T)=(1-7T+49T^2)^2.$
So,
if there existed
an elliptic curve $E$ defined over ${\mathbb F}_{49}$
with $L$-polynomial $L_E(T)=1-(\omega+\overline{\omega})T+qT^2=1-7T+49T^2$
we would have
$\sharp E({\mathbb F}_{49})=49+1-(\omega+\overline{\omega})
=43$.
But according 
to the LMFDB database 
once again,
such an elliptic curve does not exist.
So $X$ is an example of a HP-maximal curve whose Jacobian is a simple abelian surface.
\end{remark}

\begin{remark}
Let $X$ be a HP-maximal curve of genus $g$ defined over ${\mathbb F}_q$ with $q=p^a$ and let $f_{\Jac(X)}=(T^2-2\alpha T+q)^g$ be its characteristic polynomial 
where $\alpha=\frac{q+1-\sharp X({\mathbb F}_q)}{2g}$.
The Deuring-Waterhouse theorem (Theorem 4.1 in \cite{Waterhouse})
characterizes
the isogeny classes 
of elliptic curves 
over a finite field 
in terms of their characteristic polynomial.
It enables to
state that
 the Jacobian of $X$ is ${\mathbb F}_q$-isogenous to a power of an elliptic curve 
 if and only if one
of the following conditions is satisfied:
\begin{enumerate}[label=(\roman*)]
\item $(2\alpha,p)=1$ ;
\item $a$ is even and $2\alpha=\pm 2\sqrt{q}$ ;
\item $a$ is even and $p\not\equiv 1 \pmod 3$ and $2\alpha=\pm\sqrt{q}$ ;
\item $a$ is odd and $p=2$ or 3 and $2 \alpha=\pm p^{\frac{a+1}{2}}$ ;
\item either (v.i) $a$ is odd  or (v.ii) $a$ is even and $p\not\equiv 1\pmod 4$ and $\alpha=0$.
\end{enumerate}
\end{remark}

\subsection{Characterization by the number of points of the Jacobian}

Let $A$ be an abelian variety of dimension $g$ defined over ${\mathbb F}_q$
 with characteristic polynomial $f_A$ 
whose roots are denoted by
 $\omega_1, \overline{\omega}_1,\ldots,\omega_g,\overline{\omega}_g$.

If we set  $\tau_A:= -\sum_{j=1}^g(\omega_j+\overline{\omega}_j)=-2\sum_{j=1}^g \alpha_j$
 for the opposite of the trace of the Frobenius on $A$ (where the $\alpha_j$'s 
 are the real parts of the complex numbers $\omega_j$'s), then 
Aubry, Haloui and Lachaud proved in  Theorem 2.1 of \cite{AHL} 
the following bound on
  the number of rational points on  $A$: 
 \begin{equation}\label{Bound_AHL}
\sharp A({\mathbb F}_q)\leq (q+1+\tau_A/g)^g
\end{equation}
with equality if and only if 
the $\alpha_j$'s are equal.
 Theorem 2.1 of \cite{AHL} together with Proposition \ref{Alpha} lead  to the following statement.

\begin{proposition}\label{prop:carac_geom}
Let $X$ be a curve of genus $g$ defined over ${\mathbb F}_q$.
Then $X$ is a Hallouin-Perret-maximal curve 
if and only if
 the number of rational points of the Jacobian $\Jac(X)$
 of $X$ attains the upper bound (\ref{Bound_AHL}), namely 
\begin{equation}
\sharp \Jac(X)({\mathbb F}_q)= (q+1+\tau_{\Jac(X)}/g)^g.
\end{equation}
\end{proposition}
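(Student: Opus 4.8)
The plan is to obtain the statement as an immediate consequence of two facts already established: the characterization of DM-curves in Proposition \ref{Alpha}, and the equality case of the Aubry--Haloui--Lachaud bound (\ref{Bound_AHL}) quoted above from Theorem 2.1 of \cite{AHL}. The point is that both ``$X$ is a DM-curve'' and ``$\#\Jac(X)(\mathbb{F}_q)$ attains (\ref{Bound_AHL})'' turn out to be the same condition on the Frobenius eigenvalues, namely that their real parts all coincide.

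First I would recall that the characteristic polynomial $f_{\Jac(X)}$ of the Frobenius of $\Jac(X)$ is the reciprocal polynomial of $L_X(T)$, so that $f_{\Jac(X)}(T)=\prod_{j=1}^g(T^2-2\alpha_jT+q)$ with the very same real parts $\alpha_1,\ldots,\alpha_g$ attached to $X$. Applying Theorem 2.1 of \cite{AHL} to the abelian variety $A=\Jac(X)$, one has the inequality $\#\Jac(X)(\mathbb{F}_q)\leq (q+1+\tau_{\Jac(X)}/g)^g$, and the cited theorem further asserts that equality holds if and only if $\alpha_1=\cdots=\alpha_g$. I would state this explicitly as the criterion for the right-hand condition of the proposition.

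Next I would invoke Proposition \ref{Alpha}, which gives that $X$ is a DM-curve if and only if $\alpha_1=\cdots=\alpha_g$. Comparing the two criteria shows that the two assertions are equivalent to the very same condition on the $\alpha_j$'s, hence to one another; this finishes the proof. I do not expect any genuine obstacle here: the only thing to be careful about is matching the eigenvalue data of $X$ with that of $\Jac(X)$ (done via the reciprocal-polynomial relation) and citing the equality clause of \cite{AHL}, rather than merely the inequality. No further computation is needed.
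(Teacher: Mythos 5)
Your proposal is correct and matches the paper's argument: the paper likewise combines the equality clause of Theorem 2.1 in \cite{AHL} (attained if and only if all the $\alpha_j$'s coincide) with Proposition \ref{Alpha}, using that $f_{\Jac(X)}$ is the reciprocal polynomial of $L_X(T)$ so the two sets of real parts agree. Nothing further is needed.
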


\begin{proof}
Let $X$ be  a curve of genus $g$ defined over ${\mathbb F}_q$ 
and let $\alpha_1,\ldots, \alpha_g$ be
the real parts of the inverse roots $\omega_j$ of its $L$-polynomial $L_X(T)$.
Now consider the Jacobian $\Jac(X)$ of $X$
 whose characteristic polynomial 
 $f_{\Jac(X)}$ 
 is the reciprocal polynomial of the $L$-polynomial $L_X(T)$
 of $X$.
By  Proposition \ref{Alpha} we have that $X$ is a HP-maximal curve 
if and only if 
the $\alpha_j$ are all equal.
Now Theorem 2.1 of \cite{AHL} ensures that it is equivalent to saying
that  $\Jac(X)$
attains the upper bound (\ref{Bound_AHL}).
\end{proof}

\subsection{Genus of a Hallouin-Perret-maximal curve}
As a consequence of Ihara's results in \cite{Ihara}
a curve defined over $\mathbb{F}_q$ cannot be Weil-maximal if its genus is large with respect to $q$
(see also Theorem 5.1.1 in \cite{Serre}).
Precisely, if $X$ is a Weil-maximal curve defined over ${\mathbb F}_q$ of genus $g$ then $g\leq \frac{q-\sqrt{q}}{2}$.
In  Proposition \ref{prop:bound_g} 
we will obtain that the genus of
a HP-maximal curve 
defined over ${\mathbb F}_q$
is also bounded by a function of $q$.

The Frobenius angles of a curve are defined to be
the arguments of the complex inverse roots $\omega_j$ of the $L$-polynomial
of the curve.
Elkies, Howe and Ritzenthaler 
have given in \cite{EHR} 
an explicit upper bound 
on the genus of
a curve
depending on the set of the Frobenius angles.
More precisely they have proved 
in Theorem 1.1 of \cite{EHR}
that if a non-empty set 
$S \subset [0,\pi]$ of cardinal $s$ 
contains all non-negative Frobenius angles of $X$,
then the genus $g$ of $X$
satisfies $g\leq 23 s^2 q^{2s}\log q$ and $g<(\sqrt{q}+1)^{2r}(\frac{1+q^{-r}}{2})$
where  $r=1/2$ if $S=\{0\}$ and 
$r=\sharp(S\cap\{\pi\})+2\sum_{\theta\in S\setminus\{0,\pi\}} \lceil\frac{\pi}{2\theta}\rceil$ otherwise.

As a consequence we obtain the following bounds.

\begin{proposition}{\label{prop:bound_g}}
Let $X$ be a Hallouin-Perret-maximal curve defined over ${\mathbb F}_q$ of genus $g$.
Then $$g \leq 23q^2 \log q.$$
\noindent Moreover, 
when we write $L_X(T)=(1+a T +qT^2)^g$
 then

\begin{enumerate}[label=(\roman*)]
\item If 
$0<a<2\sqrt{q}$
then $g<(\sqrt{q}+1)^4(\frac{q^2+1}{2q^2})$.
\item 
If $a=2\sqrt{q}$
 then $g\leq \frac{q-\sqrt{q}}{2}$.
\item 
If $a=-2\sqrt{q}$
 then $g\leq \frac{(\sqrt{q}+1)^2}{2\sqrt{q}}$.
\end{enumerate}
\end{proposition}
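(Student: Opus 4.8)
The plan is to apply the Elkies--Howe--Ritzenthaler bounds from \cite{EHR} to a DM-curve, using the fact (Proposition \ref{Alpha}) that if $X$ is a DM-curve of genus $g$ then all its Frobenius eigenvalues share the same real part $\alpha$, hence the same Frobenius angle $\theta$ where $2\alpha=2\sqrt q\cos\theta$. Thus the set $S$ of non-negative Frobenius angles of $X$ is a single point $S=\{\theta\}$, so $s=1$ and the first EHR inequality $g\le 23 s^2 q^{2s}\log q$ immediately yields $g\le 23 q^2\log q$. This handles the first displayed inequality with no extra work.

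For the second EHR inequality $g<(\sqrt q+1)^{2r}\bigl(\tfrac{1+q^{-r}}{2}\bigr)$, I would split according to the value of $\theta$, matching the three cases of the proposition. In case (i), $2\alpha>0$ means $0\le\theta<\pi/2$, and as long as $\theta\neq 0$ (which holds since $2\alpha\le 2\sqrt q$ with equality only in case (ii)) we are in the ``otherwise'' branch with $S=\{\theta\}$, $S\setminus\{0,\pi\}=\{\theta\}$, and $S\cap\{\pi\}=\emptyset$, so $r=2\lceil\pi/(2\theta)\rceil$; since $0<\theta<\pi/2$ we get $\lceil\pi/(2\theta)\rceil\ge 2$, hence $r\ge 4$, and plugging $r=4$ into the EHR bound gives $g<(\sqrt q+1)^8\bigl(\tfrac{1+q^{-4}}{2}\bigr)$ --- but actually I want $r$ \emph{exactly}, so I should be careful: the statement claims the bound with exponent $4$, i.e.\ $r=2$, which corresponds to $\lceil\pi/(2\theta)\rceil=1$, i.e.\ $\pi/(2\theta)\le 1$, i.e.\ $\theta\ge\pi/2$; this is a contradiction with $2\alpha>0$. \emph{This is the main obstacle}: I need to re-read the EHR statement to pin down whether in case (i) one genuinely gets $r=2$ (which would require that the relevant bound uses the largest angle $\le\pi/2$, or a slightly different definition of $r$), and if not, restate (i) with the correct exponent. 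The cleanest resolution is probably that for $2\alpha\ge 0$, i.e.\ $\theta\le\pi/2$, one always has $\lceil\pi/(2\theta)\rceil\le 1$ precisely when $\theta=\pi/2$ (that is $\alpha=0$), and for $0<\theta<\pi/2$ one uses the monotonicity of the EHR bound in $r$ together with $r\le$ some explicit value; I will verify against \cite{EHR} and adjust the constant $4$ if needed, but the argument structure (single angle, read off $r$, substitute) is robust.

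For case (ii), $2\alpha=2\sqrt q$ corresponds to $\theta=0$, so $S=\{0\}$, hence by the EHR convention $r=1/2$, and the bound reads $g<(\sqrt q+1)^{1}\bigl(\tfrac{1+q^{-1/2}}{2}\bigr)=\tfrac{(\sqrt q+1)(1+q^{-1/2})}{2}=\tfrac{(\sqrt q+1)^2}{2\sqrt q}$; but this is strictly weaker than the claimed $g\le\tfrac{q-\sqrt q}{2}$ for large $q$, so here I would instead invoke Ihara's theorem directly as recalled at the start of the subsection: $2\alpha=2\sqrt q$ with all angles zero means $\omega_j=-\sqrt q$ for all $j$ (after the sign normalization in the proposition's display $L_X(T)=(1+2\alpha T+qT^2)^g$, $\omega_j=\sqrt q$), so $\sharp X(\mathbb F_q)=q+1+2g\sqrt q$ (Weil-minimal) --- wait, I must track the sign: with $L_X(T)=(1+2\alpha T+qT^2)^g$ and $2\alpha=2\sqrt q$ we get $\sharp X(\mathbb F_q)=q+1+2g\sqrt q$, which is Weil-\emph{maximal}; Ihara's bound then gives $g\le\tfrac{q-\sqrt q}{2}$. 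For case (iii), $2\alpha=-2\sqrt q$ gives the Weil-minimal curve, $\theta=0$ still (the angle is the argument's absolute value, or rather here $\omega_j=-\sqrt q$ so the Frobenius angle is $\pi$); then $S=\{\pi\}$, $S\cap\{\pi\}=\{\pi\}$ has one element, $S\setminus\{0,\pi\}=\emptyset$, so $r=1+0=1$, and the EHR bound gives $g<(\sqrt q+1)^2\bigl(\tfrac{1+q^{-1}}{2}\bigr)$; comparing with the claimed $g\le\tfrac{(\sqrt q+1)^2}{2\sqrt q}$, note $\tfrac{1+q^{-1}}{2}=\tfrac{q+1}{2q}$ versus $\tfrac{1}{2\sqrt q}$, and since $\tfrac{q+1}{2q}\ge\tfrac{1}{2\sqrt q}\iff \sqrt q(q+1)\ge q\iff q+1\ge\sqrt q$, the claimed bound is the sharper one, so again I expect one should argue via a Weil-minimal/Hermitian-type input rather than EHR. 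I would double-check which of EHR or the classical maximal/minimal genus bounds the authors actually intend in (ii) and (iii); the safest write-up invokes EHR for the $\log q$ bound and for case (i), and the Ihara/Serre--Weil bounds for the extremal cases (ii) and (iii), citing the sentence already in the text that recalls Ihara's $g\le\tfrac{q-\sqrt q}{2}$.

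In summary, the proof is: (1) DM $\Rightarrow$ single Frobenius angle (Proposition \ref{Alpha}); (2) apply EHR with $s=1$ to get $g\le 23 q^2\log q$; (3) compute $r$ for the angle $\theta$ determined by $\alpha$ and substitute into the second EHR inequality to get (i); (4) for (ii) and (iii), identify the curve as Weil-extremal and quote the classical genus bound. The only genuinely delicate point is reconciling the exponent in (i) with the precise definition of $r$ in \cite{EHR}; everything else is bookkeeping.
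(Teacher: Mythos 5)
Your overall strategy is the same as the paper's (Proposition \ref{Alpha} gives a single Frobenius angle, apply Theorem 1.1 of \cite{EHR} with $s=1$, and treat the extremal trace case via Ihara), and your derivation of $g\le 23q^2\log q$ is correct. But the two points you leave unresolved --- the exponent $4$ in (i) and the bound in (iii) --- are genuine gaps in your write-up, and both stem from one sign slip. In this proposition the $L$-polynomial is written $L_X(T)=(1+2\alpha T+qT^2)^g$ with a plus sign, unlike the minus sign in Proposition \ref{Alpha}; hence $2\alpha=-(\omega+\overline{\omega})=-2\sqrt{q}\cos\theta$, not $+2\sqrt{q}\cos\theta$ as you assume in case (i). So $2\alpha>0$ puts the angle in $(\pi/2,\pi)$ (the endpoint $\theta=\pi$ being the case (ii) value), whence $\frac{1}{2}<\frac{\pi}{2\theta}<1$, $\lceil\frac{\pi}{2\theta}\rceil=1$ and $r=2$ exactly; substituting $r=2$ in the second EHR inequality gives $g<(\sqrt{q}+1)^4\frac{1+q^{-2}}{2}=(\sqrt{q}+1)^4\frac{q^2+1}{2q^2}$, which is precisely (i). There is no contradiction and no constant to adjust, so your announced fallback (``restate (i) with the correct exponent'') is not needed, but as written your proof of (i) does not exist.

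The same sign fix repairs (iii): $2\alpha=-2\sqrt{q}$ gives $\omega=+\sqrt{q}$ (the Weil-minimal, i.e.\ Serre-minimal case), so $\theta=0$, $S=\{0\}$, the EHR convention $r=1/2$ applies, and the bound reads $g<(\sqrt{q}+1)\frac{1+q^{-1/2}}{2}=\frac{(\sqrt{q}+1)^2}{2\sqrt{q}}$ --- exactly the computation you performed but misattributed to case (ii). Conversely, in case (ii) one has $\omega=-\sqrt{q}$, the curve is Weil-maximal, and Ihara's theorem gives $g\le\frac{q-\sqrt{q}}{2}$, which is what the paper (and your own self-correction) uses there; no EHR input is needed for (ii). So every computation in your proposal is essentially correct but lands in the wrong case: as submitted, (i) is flagged as possibly false as stated and (iii) is deferred to an unspecified ``Weil-minimal/Hermitian-type'' argument, so the proof is incomplete until the convention $2\alpha=-(\omega+\overline{\omega})$ is imposed and the pieces are reassigned accordingly.
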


\begin{proof}
By Proposition \ref{Alpha}
we know that if $X$ is a HP-maximal curve 
then $f_{\Jac(X)}$ only admits one root $\omega$ and its conjugate.
With our usual conventions 
we have
 $a=-2\alpha=-(\omega+\overline{\omega})$.
The theorem of Elkies, Howe and Ritzenthaler quoted above
applies easily as $S$ is reduced to one element.
So $s=1$ and we obtain the general bound
$g \leq 23q^2 \log q$.

(i) 
When $0<a<2\sqrt{q}$
we know that the real part $\alpha$ of $\omega$ lies in the open interval $(-\sqrt{q},0)$.
So the Frobenius angle $\theta$
 which is defined as the unique 
 non-negative 
 argument of the  inverse roots of $L_X$
satisfies $\pi/2 < \theta < \pi$.
We deduce 
$\frac{1}{2} < \frac{\pi}{2\theta} < 1$,
so by definition we know that
$r=2\lceil\frac{\pi}{2\theta}\rceil=2$
and Theorem 1.1 of \cite{EHR} ensures that
$g<(\sqrt{q}+1)^4(\frac{q^2+1}{2q^2})$.

(ii) 
If $a=2\sqrt{q}$ then $\omega=-\sqrt{q}$ 
and we recognize  the case where the curve is  Weil-maximal.
The bound on $g$ is thus a consequence 
of the results of Ihara in \cite{Ihara}
(see also Theorem 5.1.1 in \cite{Serre}).

(iii) 
If $a=-2\sqrt{q}$ then $\omega=\sqrt{q}$ and
 $\theta=0$. 
 (This time the curve is  Weil-minimal.) 
 As $S$ is reduced to the set $\{0\}$,
by definition we have that $r=1/2$ and Theorem 1.1 of \cite{EHR} 
gives
$g\leq \frac{(\sqrt{q}+1)^2}{2\sqrt{q}}$.
\end{proof}

\subsection{In coverings}

Let $Y\longrightarrow X$ be a non-constant morphism of curves over ${\mathbb F}_q$. We know that if $Y$ attains the Weil upper bound (or the Weil lower bound), the same holds for $X$ (see Theorem 5.2.1. of \cite{Serre}).
More generally, it is proved in Corollary 13 of \cite{Aubry_Perret_FFA} that if $Y\longrightarrow X$ is a finite flat morphism between two varieties over a finite field, then the reciprocal polynomial of the characteristic polynomial of the Frobenius endomorphism on the $i$-th \'etale cohomology group of $X$ divides that of $Y$. In particular, if $Y\longrightarrow X$ is a non-constant morphism of curves, the $L$-polynomial $L_X(T)$ of $X$ divides the $L$-polynomial $L_Y(T)$ in ${\mathbb Z}[T]$.
We can deduce the following statement.
\begin{proposition}\label{prop:coverings}
Let $Y\longrightarrow X$ be a non-constant morphism of curves over ${\mathbb F}_q$.
If $Y$ is a Hallouin-Perret-maximal curve then $X$ is also a Hallouin-Perret-maximal curve.
\end{proposition}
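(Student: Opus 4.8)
The plan is to use the characterization of DM-curves via the shape of the $L$-polynomial together with the divisibility of $L$-polynomials under non-constant morphisms recalled just above the statement. First I would invoke Proposition~\ref{Alpha}: since $Y$ is a DM-curve of genus $g_Y$, its $L$-polynomial factors as $L_Y(T)=(1-2\alpha T+qT^2)^{g_Y}$ for a single half-sum $\alpha$ with $|\alpha|\le\sqrt q$. Then I would recall that $L_X(T)$ divides $L_Y(T)$ in $\mathbb{Z}[T]$, a consequence of Corollary~13 of \cite{Aubry_Perret_FFA} as stated in the paragraph preceding the proposition.

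The key step is then a divisibility argument over $\mathbb{Q}[T]$. The polynomial $P(T):=1-2\alpha T+qT^2$ has discriminant $4\alpha^2-4q\le 0$, so it is either irreducible over $\mathbb{Q}$ (the generic case, when $\alpha^2<q$ or when $P$ has no rational root) or a perfect square of a degree-one polynomial; in all cases $P$ is, up to a rational scalar, a power of a single irreducible polynomial $R(T)\in\mathbb{Q}[T]$. Hence $L_Y(T)$ is, up to scalar, a power of $R$. Since $\mathbb{Q}[T]$ is a unique factorization domain and $L_X$ is a nonconstant divisor of $L_Y$ of even degree $2g_X$, the polynomial $L_X$ must itself be (up to a rational scalar) a power of $R$, namely $L_X(T)=c\,R(T)^m$ for some $m\ge 1$; comparing constant terms forces $c$ to be the appropriate value so that $L_X(T)=(1-2\alpha T+qT^2)^{g_X}$ with the same $\alpha$ and with $m=g_X$ (using that $\deg L_X=2g_X$, so $m=g_X$ when $P$ is irreducible, and a matching count when $P$ is a square of a linear factor). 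The only mild subtlety is the degenerate case $\alpha=\pm\sqrt q\in\mathbb{Z}$, i.e. $q$ a square and $P(T)=(1\mp\sqrt q\,T)^2$: here one argues directly that the multiplicity of the linear factor $1\mp\sqrt q\,T$ in $L_X$ is even (since complex-conjugate reciprocal roots are equal, each contributing multiplicity two) and equals $2g_X$, giving again $L_X(T)=(1\mp\sqrt q\,T)^{2g_X}=(1-2\alpha T+qT^2)^{g_X}$.

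Once $L_X$ has the required form, Proposition~\ref{Alpha} immediately yields that $X$ is a DM-curve, which completes the argument. The main obstacle is purely bookkeeping: making sure the unique-factorization step in $\mathbb{Q}[T]$ is clean in the degenerate square case and that the scalar and degree comparisons pin down the exponent as exactly $g_X$; there is no serious analytic or geometric difficulty, since the divisibility input is already supplied by \cite{Aubry_Perret_FFA}. One could alternatively phrase the whole proof in terms of the multiset of Frobenius eigenvalues: the eigenvalues of $X$ form a subset (with multiplicity) of those of $Y$, all of which have the same real part $\alpha$, so all eigenvalues of $X$ have real part $\alpha$, and Proposition~\ref{Alpha} applies. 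I would likely present this eigenvalue version as the primary argument since it is the shortest, and mention the $\mathbb{Q}[T]$-divisibility version as the underlying justification for "subset with multiplicity".
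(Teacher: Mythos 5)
Your proposal is correct and, in its preferred form (the eigenvalue version), is essentially the paper's own proof: since $L_X(T)$ divides $L_Y(T)$, the Frobenius eigenvalues of $X$ are among those of $Y$, all of whose real parts are equal by Proposition~\ref{Alpha}, and the same proposition then gives that $X$ is a DM-curve. The longer unique-factorization argument in $\mathbb{Q}[T]$ is a valid but unnecessary elaboration of the same divisibility input.
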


\begin{proof}
If $Y$ is a HP-maximal curve, then by Proposition \ref{Alpha} the real parts of the eigenvalues of the Frobenius on $Y$ are all equal.
Since $L_X(T)$ divides $L_Y(T)$ the same holds for $X$.
\end{proof}

\section{HP-defect of a curve}
In this section we introduce the HP-defect of a curve over a finite field
as a measure of how far the curve is from being Hallouin-Perret-maximal,
that is of how far 
the inequality (\ref{HP-bound}) is from the case of equality.  
\subsection{Definition and first properties}
To define the HP-defect
we naturally consider the difference between
the right hand side and the left hand side of the inequality (\ref{HP-bound}) 
and we normalize
in order to work with integers.

\begin{definition}\label{definition:HP}
Let $X$ be a curve of genus $g\geq 1$ defined over ${\mathbb F}_q$.
The HP-defect of $X$, denoted by $\delta_{HP}(X)$, is defined by 
$$\delta_{HP}(X)=2qg^2-\left(\sharp X({\mathbb F}_q)-(q+1) \right)^2-g\left( \sharp X({\mathbb F}_{q^2}) -(q^2+1)\right).$$
\end{definition}

The following theorem aims to give different expressions of $\delta_{HP}$
in terms of the real parts $\alpha_j$'s of the 
  eigenvalues $\omega_j$'s of the Frobenius endomorphism on $X$.
It will prove useful to give alternative and elementary proofs of (\ref{HP-bound})
and to obtain other bounds on $\delta_{HP}$.

As we will provide an interpretation of $\delta_{HP}$ 
in terms of statistical tools, 
we briefly clarify the context.
When we consider a statistical sample
$z=(z_1,\ldots,z_g) \in \mathbb{R}^g$,
one naturally associates its mean $\overline{z}$
defined by $\overline{z} :=(z_1+\ldots+z_g)/g$.
The variance is another important numerical value associated to a sample 
which is defined as a measure of dispersion around the mean by the equality
$V(z):=\sum_{j=1}^g (z_j - \overline{z})^2/g$.

\begin{theorem} \label{delta}
Let $X$ be a curve of genus $g\geq 1$ defined over ${\mathbb F}_q$ and let
$\alpha_1,\ldots, \alpha_g$ be the real parts of its Frobenius eigenvalues.
Then we have:

\begin{enumerate}[label=(\roman*)]
\item \begin{equation*} \label{delta_1}
\delta_{HP}(X)=4 \left(g \sum_{j=1}^g \alpha_j^2- \left( \sum_{j=1}^g \alpha_j \right)^2 \right).
\end{equation*}

\item If we set $\displaystyle{ \sigma_1=\sum_{j=1}^g \alpha_j}$ and $\sigma_2=\displaystyle{ \sum_{1 \leq j <k \leq g} \alpha_j \alpha_k}$ 
then we have
\begin{equation*} \label{delta_3}
\delta_{HP}(X)=4 \left((g-1) \sigma_1^2 - 2g \sigma_2 \right) \textrm{ and}
\end{equation*}
\item \begin{equation*} \label{delta_2}
\delta_{HP}(X)=4 \sum_{1\leq j<k \leq g} \left( \alpha_j - \alpha_k \right)^2.
\end{equation*}
\item 
If we consider $(\alpha_j)_{1 \leq j \leq g}$ as a statistical sample
 whose mean is given by 
$\displaystyle{\overline{\alpha}:=\frac{1}{g} \sum_{j=1}^g \alpha_j}$ and  whose variance equals
 $\displaystyle{V({\alpha}):=\frac{1}{g} \sum_{j=1}^g \left(\alpha_j-\overline{\alpha} \right)^2}$,
then we have
\begin{equation*} \label{delta_4}
\delta_{HP}(X)=4g^2 V(\alpha).
\end{equation*}
\end{enumerate}
\end{theorem}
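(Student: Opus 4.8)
The plan is to prove all four identities by first establishing (i) directly from the definitions, and then deriving (ii), (iii) and (iv) by elementary algebraic manipulation of symmetric functions of the $\alpha_j$'s.

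First I would recall the point-count formulas already used in the proof of Proposition~\ref{Alpha}: writing $\omega_j+\overline{\omega}_j = 2\alpha_j$ and $\omega_j^2+\overline{\omega}_j^2 = 4\alpha_j^2 - 2q$, we get
$\sharp X(\mathbb{F}_q) - (q+1) = -2\sum_{j=1}^g \alpha_j$ and
$\sharp X(\mathbb{F}_{q^2}) - (q^2+1) = -\sum_{j=1}^g(4\alpha_j^2 - 2q) = 2gq - 4\sum_{j=1}^g \alpha_j^2$.
Substituting these into the definition of $\delta_{DM}(X)$ gives
\begin{equation*}
\delta_{DM}(X) = 2qg^2 - 4\Bigl(\sum_{j=1}^g \alpha_j\Bigr)^2 - g\Bigl(2gq - 4\sum_{j=1}^g \alpha_j^2\Bigr),
\end{equation*}
and the terms $2qg^2$ and $-2g^2 q$ cancel, leaving exactly $4\bigl(g\sum_{j=1}^g \alpha_j^2 - (\sum_{j=1}^g \alpha_j)^2\bigr)$, which is (i).

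For (ii), I would expand $\sigma_1^2 = (\sum \alpha_i)^2 = \sum \alpha_i^2 + 2\sigma_2$, so $\sum_{i=1}^g \alpha_i^2 = \sigma_1^2 - 2\sigma_2$. Plugging this into (i) yields $\delta_{DM}(X) = 4\bigl(g(\sigma_1^2 - 2\sigma_2) - \sigma_1^2\bigr) = 4\bigl((g-1)\sigma_1^2 - 2g\sigma_2\bigr)$. For (iii), I would use the classical identity $\sum_{1\le i<j\le g}(\alpha_i - \alpha_j)^2 = g\sum_{i=1}^g \alpha_i^2 - (\sum_{i=1}^g \alpha_i)^2$ (which follows by expanding the left side: each $\alpha_i^2$ appears $g-1$ times and the cross terms contribute $-2\sigma_2$, matching $g\sum\alpha_i^2 - \sigma_1^2$ since $\sigma_1^2 = \sum\alpha_i^2 + 2\sigma_2$). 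This identifies the parenthesis in (i) with $\sum_{i<j}(\alpha_i-\alpha_j)^2$, giving (iii). Finally for (iv), I would expand the variance: $g^2 V(\alpha) = g\sum_{i=1}^g(\alpha_i - E(\alpha))^2 = g\sum_{i=1}^g \alpha_i^2 - 2gE(\alpha)\sum_{i=1}^g\alpha_i + g^2 E(\alpha)^2$; using $\sum\alpha_i = gE(\alpha)$ the last two terms combine to $-g^2 E(\alpha)^2 = -(\sum\alpha_i)^2$, so $g^2 V(\alpha) = g\sum\alpha_i^2 - (\sum\alpha_i)^2$, and comparing with (i) gives $\delta_{DM}(X) = 4g^2 V(\alpha)$.

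There is essentially no serious obstacle here; the only thing requiring a little care is the bookkeeping of cancellations in the substitution step for (i), and making sure the combinatorial identity $\sum_{i<j}(\alpha_i-\alpha_j)^2 = g\sum\alpha_i^2 - (\sum\alpha_i)^2$ is stated cleanly, since (iii) and (iv) are really just two repackagings of the same quantity. I would present (i) in full and then note that (ii), (iii), (iv) all follow from the single algebraic fact that $g\sum_{i=1}^g\alpha_i^2 - (\sum_{i=1}^g\alpha_i)^2$ equals each of $(g-1)\sigma_1^2 - 2g\sigma_2$, $\sum_{1\le i<j\le g}(\alpha_i-\alpha_j)^2$, and $g^2 V(\alpha)$.
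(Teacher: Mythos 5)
Your proof is correct and follows essentially the same route as the paper: establish (i) by substituting the point-count formulas $\sharp X(\mathbb{F}_q)-(q+1)=-2\sum_j\alpha_j$ and $\sharp X(\mathbb{F}_{q^2})-(q^2+1)=2gq-4\sum_j\alpha_j^2$ into the definition of $\delta_{DM}$, then obtain (ii), (iii), (iv) as elementary algebraic reformulations of $g\sum_i\alpha_i^2-\bigl(\sum_i\alpha_i\bigr)^2$. The only difference is that you spell out the symmetric-function and variance identities that the paper treats as immediate or cites as classical, which is fine.
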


\begin{proof}
\textit{(i)}
 If we still denote by $\omega_j$'s the Frobenius eigenvalues of  $X$, we get
from the definition of the HP-defect:
$$\delta_{HP}(X)=2qg^2- \bigl(-\sum_{j=1}^g (\omega_j + {\overline \omega_j})\bigr)^2
-g \bigl(-\sum_{j=1}^g (\omega_j^2 + {\overline \omega_j}^2)\bigr)$$
which implies that
$$\delta_{HP}(X)=2qg^2- 4\bigl(
\sum_{j=1}^g \alpha_j\bigr)^2
+g \sum_{j=1}^g\bigl(4\alpha_j^2-2q\bigr)$$
and the result follows.\\
\noindent \textit{(ii) and (iii)} These points are direct consequences of point (i). \\ 
\noindent \textit{(iv)} If we factorize the equality (i) by $4g^2$ 
we get 
$\displaystyle{4g^2 
\bigg( \frac{1}{g}\sum_{j=1}^g \alpha_j^2 -  \Big( \frac{1}{g} \sum_{j=1}^g \alpha_j  \Big)^2} \bigg)$,
and we recognize
 $\overline{\alpha^2}-\overline{\alpha}^2$,
that is a classical formulation of the variance of  ${\alpha}$.
Note that the equality (iii) is also a well-known expression for the variance of ${\alpha}$
(see Section 2.4 in \cite{Mood} for example).
\end{proof}
The previous theorem gives
 alternative and elementary proofs of the Hallouin-Perret bound (\ref{HP-bound}) 
 which simply reads $\delta_{HP}(X)\geq 0$.
 It also provides an interpretation of this bound in terms of non-negativity
 of the variance of the  sample $(\alpha_j$).
From points (i) or (iv) one can also
 immediately deduce an upper bound on $\delta_{HP}(X)$ and then 
 a symmetric formulation which completes inequality (\ref{HP-bound}).
\begin{corollary}\label{intervalle}
Let $X$ be a curve of 
genus $g\geq 2$
defined over ${\mathbb F}_q$.
We have 
$$ \delta_{HP}(X) \in[0, 4qg^2]
 $$
that is
\begin{equation}\label{G2IHP}
\left|  \# X(\mathbb{F}_{q^2}) - (q^2+1) + \frac{1}{g} \left( \# X(\mathbb{F}_q) -(q+1) \right)^2 \right|  \leq 2qg.
\end{equation}
\end{corollary}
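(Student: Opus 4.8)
The plan is to deduce both inequalities directly from the expressions for $\delta_{DM}(X)$ established in Theorem \ref{delta}, using only the Riemann Hypothesis bound $|\alpha_j|\leq\sqrt q$. There is essentially no obstacle here; the work is a two-line estimate plus an algebraic reformulation.

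First I would recall that the lower bound $\delta_{DM}(X)\geq 0$ is immediate: by Theorem \ref{delta}(iii) we have $\delta_{DM}(X)=4\sum_{1\leq i<j\leq g}(\alpha_i-\alpha_j)^2\geq 0$ (equivalently, by Theorem \ref{delta}(iv), it equals $4g^2V(\alpha)$ and a variance is non-negative). For the upper bound, I would start from Theorem \ref{delta}(i),
\begin{equation*}
\delta_{DM}(X)=4\left(g\sum_{i=1}^g\alpha_i^2-\Bigl(\sum_{i=1}^g\alpha_i\Bigr)^2\right)\leq 4g\sum_{i=1}^g\alpha_i^2,
\end{equation*}
and then use $|\alpha_i|\leq\sqrt q$, hence $\alpha_i^2\leq q$ and $\sum_{i=1}^g\alpha_i^2\leq gq$, to conclude $\delta_{DM}(X)\leq 4qg^2$. (The same estimate can be phrased through Theorem \ref{delta}(iv) as $V(\alpha)=E(\alpha^2)-E(\alpha)^2\leq E(\alpha^2)\leq q$.)

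Finally I would translate $\delta_{DM}(X)\in[0,4qg^2]$ into the symmetric inequality (\ref{G2IHP}). Dividing the defining identity of Definition \ref{definition:DM} by $g$ gives
\begin{equation*}
\#X(\mathbb{F}_{q^2})-(q^2+1)+\frac{1}{g}\bigl(\#X(\mathbb{F}_q)-(q+1)\bigr)^2=2qg-\frac{\delta_{DM}(X)}{g}.
\end{equation*}
Since $\delta_{DM}(X)/g$ ranges in $[0,4qg]$, the right-hand side lies in $[-2qg,2qg]$, which is exactly (\ref{G2IHP}). The only place where $g\geq 2$ is used is to make the statement non-trivial (for $g=1$ the defect is identically zero by Remark \ref{remark:elliptic}); the chain of estimates itself is valid for all $g\geq 1$.
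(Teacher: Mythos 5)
Your proof is correct and follows essentially the same route as the paper, which deduces the corollary directly from Theorem \ref{delta} (i) or (iv): non-negativity of the defect (equivalently of the variance), the bound $\alpha_i^2\leq q$ from the Riemann Hypothesis to get $\delta_{DM}(X)\leq 4qg^2$, and then the algebraic rewriting of Definition \ref{definition:DM} to obtain (\ref{G2IHP}). Nothing to add.
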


Let us exploit the variance formulation of $\delta_{HP}(X)$.
It is a classical statistical problem to give bounds 
for the variance of a sample $(\alpha_j)_{1 \leq j \leq g}$ 
of elements in a range $\left[ \alpha_{\textrm{min}};\alpha_{\textrm{max}} \right]$.
In \cite{KaiSpan} (see Lemma 1)
Kaiblinger and Spangl 
state that we always have 
$$
V(\alpha) \leq \frac{1}{4} (\alpha_{\textrm{max}}-\alpha_{\textrm{min}})^2,$$
and that,
if $g$ is odd 
the bound  
can be improved this way
$$V(\alpha) \leq \frac{1}{4} \left( 1-\frac{1}{g^2}  \right) (\alpha_{\textrm{max}}-\alpha_{\textrm{min}})^2.$$
Lemma 1 in \cite{KaiSpan}
 also indicates that when $g$ is even this upper bound
 on $V(\alpha)$
  is reached
if and only if half of the values $\alpha_j$ equal $- \sqrt{q}$
whereas the other half equal $ \sqrt{q}$.
When $g$ is odd, the upper bound is reached if and only if 
$(g-1)/2$ or $(g+1)/2$  values equal $- \sqrt{q}$ whereas the others equal $\sqrt{q}$.
We deduce the following proposition.
 \begin{proposition} \label{improve_HP}
Let $X$ be a curve of 
genus $g\geq 2$ 
defined over ${\mathbb F}_q$
and let
$\alpha_1,\ldots, \alpha_g$ be the real parts of its Frobenius eigenvalues.
\begin{enumerate}[label=(\roman*)]
\item If $g$ is even then
\begin{equation} \label{GIHPpair}
-2qg \leq  \# X(\mathbb{F}_{q^2}) - (q^2+1) + \frac{1}{g} \left( \# X(\mathbb{F}_q) -(q+1) \right)^2
\end{equation} 
and equality holds if and only if $g/2$ values $\alpha_j$ equal $-\sqrt{q}$
whereas the others equal $\sqrt{q}$.
\item  If $g$ is odd then
\begin{equation} \label{GIHPimpair}
-2q \left(g-\frac{2}{g} \right) \leq  \# X(\mathbb{F}_{q^2}) - (q^2+1) + \frac{1}{g} \left( \# X(\mathbb{F}_q) -(q+1) \right)^2 
\end{equation} 
and equality holds if and only if $(g-1)/2$ or $(g+1)/2$ values  $\alpha_j$ equal $-\sqrt{q}$
whereas the others equal $\sqrt{q}$. 
\end{enumerate}
\end{proposition}

Actually the inequality (\ref{GIHPpair})
is less accurate than the Weil bound applied to the curve 
considered over $\mathbb{F}_{q^2}$, 
but we indicate it 
for the sake of completeness.
However, the bound (\ref{GIHPimpair}) can sometimes yield valuable information.
For example, consider a genus $3$ curve defined over $\mathbb{F}_4$ such that $N_1=5$.
The Weil bound (\ref{Weilbound}) yields $N_2 \geq 5$
whereas (\ref{GIHPimpair}) enables to deduce from the extra information 
about $N_1$
that $N_2 \geq 9$.
Actually the LMFDB database \cite{LMFDB}
provides the existence of such curves\footnote{\url{https://www.lmfdb.org/Variety/Abelian/Fq/3/4/a_ad_ac}} 
with  $N_2=11$
(for example the curve of equation
$a x^4+x y^3+x z^3+y^4=0$
where $a \in \mathbb{F}_{4}$ satisfies 
$a^2+a+1=0$).
Moreover it indicates the existence 
of an abelian variety\footnote{\url{https://www.lmfdb.org/Variety/Abelian/Fq/3/4/a_ae_a}}   
whose associated virtual curve
would have $9$ rational points over $\mathbb{F}_{16}$.
(For reference, this abelian variety is defined over $\mathbb{F}_4$,
has dimension $3$,
and its $L$-polynomial is given by
$ (1-2T)^2(1+2T)^2(1+4T^2)$.)

\begin{remark}\label{remark:totof}
The necessary conditions on the $\alpha_j$'s established
in  point (i) of Proposition \ref{improve_HP} help us to find,
with the help of the  LMFDB database \cite{LMFDB},
curves of genus $2$
which reach the lower bound
$-2qg$.
Let us give some examples when $g$ is even.
For instance 
the (Diophantine-stable) hyperelliptic curve $X$ of equation $y^2=x^5+4x$ 
defined over $\F_5$ satisfies $\#X(\F_q)=\#X(\F_{q^2})=6$ 
and we can check the equality
$\# X(\mathbb{F}_{q^2}) - (q^2+1)+\frac{1}{g}\left( \# X(\mathbb{F}_q) -(q+1) \right)^2 =-2qg$.
We have also found examples when $q=7$, $q=8$ or $q=11$.
For the case $q=9$ the necessary conditions lead us to
$N_1=10$, $N_2=46$, and to an isogeny class which does not contain any Jacobian
according to the LMFDB database\footnote{\url{https://www.lmfdb.org/Variety/Abelian/Fq/2/9/a\_as}}.
(For reference, this isogeny class is associated to the $L$-polynomial 
$ (1-3T)^2(1+3T)^2$.)
For the case $q=13$ one can find (at least) two hyperelliptic curves of genus $2$ 
with equations $y^2=x^5+12x$ and
$y^2=x^6+2x^3+8$ such that
$\alpha_1=-\sqrt{13}$ and $\alpha_2=\sqrt{13}$.
We thus have
$N_1=14$, $N_2=118$ and the lower bound is reached. 
 \end{remark}

\begin{remark}\label{remark:christophe}
We are indebted to Christophe Ritzenthaler for pointing out 
the following example of a non-elliptic curve of odd genus which reaches the lower bound
$-2q(g-\frac{2}{g})$.
We take $q=49$ and we consider a generator $a$ of the multiplicative group $\mathbb{F}_q^{\ast}$.
Thus the curve of equation 
$x^{4} + a^{43} x^{3} y + a^{36} x^{3} z + a^{27} x^{2} y^{2} + a^{10} x^{2} y z 
+ a^{31} x^{2} z^{2} + a^{9} x y^{3} + a^{47} x y^{2} z + a^{6} x y z^{2} + a^{19} x z^{3} 
+ a^{9} y^{4} + 6 y^{3} z + a^{46} y^{2} z^{2} + a^{22} y z^{3} + 6 z^{4} = 0$
has genus $3$ and is such that $N_1=2108$ whereas $N_2=36$,
and so we can check
$\# X(\mathbb{F}_{q^2}) - (q^2+1)+\frac{1}{g}\left( \# X(\mathbb{F}_q) -(q+1) \right)^2 =-2q(g-\frac{2}{g})$.
The curve is obtained by twisting the curve of equation $x^4+y^4+z^4=0$
so that it only changes one elliptic factor.
\end{remark}

\subsection{Weil polynomials and Hallouin-Perret-maximal curves}\label{subsection:qWeil}
In this subsection
 we relate the topic of HP-maximal curves
 with the question of determining whether
a polynomial 
$f(T)=1+a_1T+a_2T^2+\cdots+a_2 q^{g-2} T^{2g-2} + a_1 q^{g-1} T^{2g-1}+q^g T^{2g} \in {\mathbb Z}[T]$
can be the reciprocal polynomial 
of the characteristic polynomial 
of a dimension $g$
abelian variety over ${\mathbb F}_q$. 
We provide necessary conditions in terms of the two first coefficients $a_1$ and $a_2$ 
and in terms of $\delta_{HP}$.

Recall that a $q$-Weil number is an algebraic integer such that its image under every embedding 
has absolute value $\sqrt{q}$ and
that a monic polynomial of ${\mathbb Z}[T]$ is called a $q$-Weil polynomial 
if all its roots are $q$-Weil numbers.
The Honda-Tate theorem (see \cite{Honda-Tate})  
establishes a bijection between the simple abelian varieties over ${\mathbb F}_q$
up to isogeny 
and the $q$-Weil numbers up to conjugation
so that a well-known necessary condition for $f$ is to be a $q$-Weil polynomial.

When  $g=2$
R\"uck has proved  (Theorem 1.1 in \cite{Ruck})
that a necessary condition 
for $f$ to be a $q$-Weil polynomial
is that
$a_2 \leq \frac{a_1^2}{4}+2q$.
In the same direction, 
Haloui has proved  (Theorem 1.1 in \cite{Haloui_JNT})
that for $g=3$ 
a necessary condition is that
$a_2 \leq \frac{a_1^2}{3}+3q$. 
Haloui and Singh have also proved  (Theorem 1.1 in  \cite{Haloui-Singh})
that if
$g=4$
a necessary condition is given by
$a_2 \leq \frac{3}{8}a_1^2+4q$.
Lastly, when  $g=5$ Sohn has stated
 (Theorem 1.2  in \cite{Sohn})
 that the condition becomes
$a_2 \leq \frac{2}{5}a_1^2+5q$.
We will propose a generalization
 of this series of necessary
  conditions for any $g$,
  as well as a lower bound for the coefficient $a_2$.

We start expliciting relations between
 the coefficients $a_1, a_2$ and the defect $\delta_{HP}(X)$
 in the case where $f(T)$ is 
the $L$-polynomial of a curve $X$. 
\begin{proposition}\label{somme_des_a_i}
Let $X$ be a curve defined over ${\mathbb F}_q$ of genus $g\geq 2$
 and with HP-defect $\delta_{HP}$.
For any $j \in \{1,\ldots,g \}$ we also note $\alpha_j$ for the real part of the inverse root $\omega_j$ 
of the $L$-polynomial
$L_X(T)=1+a_1T+a_2T^2+\cdots+a_2 q^{g-2} T^{2g-2} + a_1 q^{g-1} T^{2g-1}+q^g T^{2g}.$
We have
\begin{equation}\label{equation:somme_alpha_i}
\sum_{j=1}^g \alpha_j^2=\frac{a_1^2+\delta_{HP}}{4g}.
\end{equation}
We deduce
\begin{equation}\label{equation:inegalite_a2}
a_2 \leq \frac{a_1^2(g-1)}{2g} +gq
\end{equation}
with equality if and only if $X$ is a Hallouin-Perret-maximal curve. 
We also have the inequality
\begin{equation}\label{equation:lower_bound_a_2}
a_2 \geq 
\begin{cases}
\frac{g-1}{2g} a_1^2 -gq  & \text{when } g \geq 2 \text{ is even}, \\
\frac{g-1}{2g} a_1^2 +\frac{(2-g^2)}{g}q & \text{when } g  \geq 3 \text{ is odd}.
\end{cases}
\end{equation}
\end{proposition}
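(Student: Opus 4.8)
The plan is to read everything off the factorisation $L_X(T)=\prod_{j=1}^g(1-2\alpha_jT+qT^2)$ together with Theorem~\ref{delta}. First I would expand this product and match coefficients: the coefficient of $T$ gives $a_1=-2\sum_{j=1}^g\alpha_j$, so that $\bigl(\sum_{j}\alpha_j\bigr)^2=a_1^2/4$, and the coefficient of $T^2$ gives $a_2=gq+4\sum_{1\le i<j\le g}\alpha_i\alpha_j$. Substituting $\bigl(\sum_j\alpha_j\bigr)^2=a_1^2/4$ into the formula $\delta_{DM}(X)=4\bigl(g\sum_j\alpha_j^2-(\sum_j\alpha_j)^2\bigr)$ of Theorem~\ref{delta}(i) immediately yields the identity (\ref{equation:somme_alpha_i}), namely $\sum_{j=1}^g\alpha_j^2=\frac{a_1^2+\delta_{DM}}{4g}$.

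For the bound on $a_2$, I would use the Newton relation $\bigl(\sum_j\alpha_j\bigr)^2=\sum_j\alpha_j^2+2\sum_{i<j}\alpha_i\alpha_j$ to rewrite
\[
a_2=gq+4\sum_{i<j}\alpha_i\alpha_j=gq+2\Bigl(\sum_j\alpha_j\Bigr)^2-2\sum_j\alpha_j^2=gq+\frac{a_1^2}{2}-\frac{a_1^2+\delta_{DM}}{2g},
\]
that is $a_2=\frac{a_1^2(g-1)}{2g}+gq-\frac{\delta_{DM}}{2g}$. From here the upper bound (\ref{equation:inegalite_a2}) is nothing but $\delta_{DM}(X)\ge 0$, which is the Hallouin--Perret inequality (\ref{DM-bound}) (equivalently the Cauchy--Schwarz step underlying Theorem~\ref{delta}), and equality holds precisely when $\delta_{DM}(X)=0$, i.e.\ when $X$ is a DM-curve.

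For the lower bound (\ref{equation:lower_bound_a_2}) I would feed into the same identity the upper estimates for $\delta_{DM}$ already available. Corollary~\ref{intervalle} gives $\delta_{DM}(X)\le 4qg^2$ for every $g\ge2$, which yields the even-$g$ case $a_2\ge\frac{g-1}{2g}a_1^2-gq$. When $g$ is odd, Theorem~\ref{improve_HP}(ii) sharpens this to $\delta_{DM}(X)\le 4q(g^2-1)$ (multiply (\ref{GIHPimpair}) by $-g$ and add $2qg^2$, or argue directly from $\delta_{DM}=4g^2V(\alpha)$, Theorem~\ref{delta}(iv), the refined bound $V(\alpha)\le\frac14(1-g^{-2})(\alpha_{\max}-\alpha_{\min})^2$ of \cite{KaiSpan}, and $|\alpha_j|\le\sqrt q$); substituting gives $a_2\ge\frac{g-1}{2g}a_1^2+\frac{(2-g^2)q}{g}$.

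The computation is entirely elementary and there is no genuine obstacle beyond bookkeeping; the one point deserving a little care is the translation of the $\#X(\mathbb F_{q^2})$, $\#X(\mathbb F_q)$ inequalities of Theorem~\ref{improve_HP} into the clean bound $\delta_{DM}(X)\le 4q(g^2-1)$ used in the odd case, and keeping straight the two conventions for the signs of $a_1$ and $\sum_j\alpha_j$.
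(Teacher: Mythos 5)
Your proof is correct and follows essentially the same route as the paper: the same identity $a_2=\frac{(g-1)a_1^2}{2g}+gq-\frac{\delta_{DM}}{2g}$ (the paper derives it from Theorem~\ref{delta}(ii), you from Theorem~\ref{delta}(i) plus Newton's relation, a trivial difference), the upper bound from $\delta_{DM}\geq 0$ with equality exactly for DM-curves, and the lower bounds from $\delta_{DM}\leq 4qg^2$ (even $g$) and $\delta_{DM}\leq 4q(g^2-1)$ (odd $g$, via Theorem~\ref{improve_HP}/Kaiblinger--Spangl), just as in the paper.
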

\begin{proof}
For the first point  
we express equality (\ref{delta_1})
in Theorem \ref{delta}
in terms of the coefficient $a_1$ 
which satisfies $a_1=-2 \sum_{j=1}^g \alpha_j$.

For the second point we start from equality (\ref{delta_3}) in Theorem \ref{delta}.
We use $a_1=-2 \sigma_1$ and $a_2=gq+4\sigma_2$
to get  
\begin{equation}\label{equation:delta_HP_a1_a2}
2ga_2+\delta_{HP}=(g-1)a_1^2+2g^2q
\end{equation}
and thus
$a_2=a_1^2(g-1)/2g+gq-\delta_{HP}/2g$.
But $\delta_{HP}$ is non-negative
and equals zero if and only if the curve is HP-maximal.

For the third point we just exploit the inequalities
$\delta_{HP} \leq 4 g^2 q$ when $g$ is even
and 
$\delta_{HP} \leq 4 (g^2-1) q$ when $g$ is odd 
together with (\ref{equation:delta_HP_a1_a2}).
\end{proof}

\begin{remark}
Let us point out a geometric interpretation in the euclidean space ${\mathbb R}^g$
if we associate to a curve $X$ the point $P_X$ of coordinates  $(\alpha_j)_{1\leq j \leq g}$.
Thanks to the Riemann Hypothesis we know that
this point belongs to the closed ball 
$\overline{B}_{\infty}(0,\sqrt{q})$.
Now we fix the value of $a_1$.
A way to translate Proposition (\ref{somme_des_a_i}) is to say that 
the point $P_X$ belongs to the affine plane $\mathcal{P}$ of equation $ \sum_{j=1}^g x_j=-\frac{a_1}{2}$ 
as well as to the sphere  $\mathcal{S}$ of equation $\sum_{j=1}^g x_j^2=\frac{a_1^2+\delta}{4g}$.
Since the radius 
$r:=\sqrt{\frac{a_1^2+\delta_{HP}}{4g}}$
of $\mathcal S$  
is greater than or equal to the distance $d:=\frac{|a_1|}{2\sqrt{g}}$
 from the origin to the plane $\mathcal P$, 
 we deduce that the intersection of $\mathcal S$ and $\mathcal P$ is nonempty.
Moreover, $d=r$ if and only if $\delta_{HP}=0$.
In other words, $X$ is a HP-maximal curve if and only if $\mathcal{P}$ is tangent to $\mathcal{S}$.
\end{remark}

Until the end of this section,
we no longer consider any algebraic curve $X$.
We just assume that $f$
is a $q$-Weil polynomial
and we still denote by $\alpha_j$
the real parts of its complex roots $\omega_j$. 
We still consider
$\tilde{\delta}:=4g^2V(\alpha)$
where  $V(\alpha)$ is the variance of the $\alpha_j$'s.
The real number $\tilde{\delta}$ no longer expresses 
in terms of the number of points of a curve over a finite field
and over its quadratic extension, 
but the different results of Theorem \ref{delta}
(with $\tilde{\delta}$ in place of $\delta_{HP}$) 
as well as 
the inequalities (\ref{equation:inegalite_a2}) and (\ref{equation:lower_bound_a_2}) remain valid.
We thus obtain the following generalization
of the results of 
R\"uck,
Haloui,
Haloui and Singh, and Sohn
together with a lower bound on the coefficient $a_2$.

\begin{theorem}\label{th:bound_a_2}
We consider $g \geq 2$.
If
 $T^{2g}+a_1T^{2g-1}+a_2T^{2g-2}+\cdots+a_2 q^{g-2} T^{2} + a_1 q^{g-1} T+q^g $
is  a $q$-Weil polynomial then
\begin{equation*}
a_2 \leq \frac{a_1^2(g-1)}{2g} +gq
\end{equation*}
and 
\begin{equation}\label{equation:lower_bound_a2}
a_2 \geq
\begin{cases}
\frac{g-1}{2g} a_1^2 -gq  & \text{ when } g\geq 2 \text{ is even}, \\
\frac{g-1}{2g} a_1^2 +\frac{(2-g^2)}{g}q & \text{ when } g  \geq 3 \text{ is odd}.
\end{cases}
\end{equation}
\end{theorem}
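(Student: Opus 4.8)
The plan is to reduce this statement to Proposition~\ref{somme_des_a_i} by the standard observation that a $q$-Weil polynomial, although it need not be the $L$-polynomial of a curve, shares with such $L$-polynomials all the features actually used in the proof of that proposition: the roots come in conjugate pairs $\omega_j, \overline{\omega}_j$ with $|\omega_j|=\sqrt q$, so each pair contributes a real factor $T^2 - 2\alpha_j T + q$ with $|\alpha_j|\le \sqrt q$, and the symmetric functions $\sigma_1 = \sum_j \alpha_j$, $\sigma_2 = \sum_{i<j}\alpha_i\alpha_j$ relate to $a_1, a_2$ exactly as before via $a_1 = -2\sigma_1$ and $a_2 = gq + 4\sigma_2$. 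Here I would first spell out this dictionary: writing $f(T) = \prod_{j=1}^g (T^2 - 2\alpha_j T + q)$ and expanding, the coefficient of $T^{2g-1}$ is $-2\sum_j \alpha_j$ and the coefficient of $T^{2g-2}$ is $\sum_{i<j} 4\alpha_i\alpha_j + gq$ (the $gq$ coming from choosing the constant $q$ in all but one factor). This gives the two identities above, with no appeal to any curve.

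Next I would introduce, purely formally, the quantity $\delta_{DM} := 4g^2 V(\alpha)$ where $V(\alpha) = \frac1g\sum_j \alpha_j^2 - \bigl(\frac1g\sum_j\alpha_j\bigr)^2$ is the variance of the real parts, exactly as announced in the paragraph preceding the theorem. By Theorem~\ref{delta}(i)--(iii) — whose proof is a formal manipulation of the $\alpha_j$'s and never uses that they come from a curve — we still have
\begin{equation*}
\delta_{DM} = 4\Bigl(g\sum_j \alpha_j^2 - \bigl(\sum_j\alpha_j\bigr)^2\Bigr) = 4\bigl((g-1)\sigma_1^2 - 2g\sigma_2\bigr) = 4\sum_{i<j}(\alpha_i-\alpha_j)^2 \ge 0.
\end{equation*}
Substituting $a_1 = -2\sigma_1$ and $a_2 = gq + 4\sigma_2$ into the middle expression yields $2g\,a_2 + \delta_{DM} = (g-1)a_1^2 + 2g^2 q$, hence $a_2 = \frac{(g-1)a_1^2}{2g} + gq - \frac{\delta_{DM}}{2g}$. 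Since $\delta_{DM}\ge 0$ this gives the upper bound immediately.

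For the lower bound I would invoke the Kaiblinger--Spangl estimates quoted before Theorem~\ref{improve_HP}: the $\alpha_j$ lie in $[-\sqrt q, \sqrt q]$, so $V(\alpha) \le \frac14(2\sqrt q)^2 = q$ in general, and $V(\alpha) \le \frac14(1 - \frac1{g^2})(2\sqrt q)^2 = q(1-\frac1{g^2})$ when $g$ is odd. Thus $\delta_{DM} \le 4g^2 q$ for $g$ even and $\delta_{DM} \le 4(g^2-1)q$ for $g$ odd, and plugging these into $a_2 = \frac{(g-1)a_1^2}{2g} + gq - \frac{\delta_{DM}}{2g}$ gives $a_2 \ge \frac{(g-1)a_1^2}{2g} - gq$ in the even case and $a_2 \ge \frac{(g-1)a_1^2}{2g} + \frac{(2-g^2)}{g}q$ in the odd case, as claimed. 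I do not expect any serious obstacle here; the only thing to be careful about is making explicit that every ingredient — the root structure, the coefficient identities, Theorem~\ref{delta}(i)--(iii), and the variance bounds — is purely a statement about a multiset of $g$ real numbers in $[-\sqrt q,\sqrt q]$, so the hypothesis "$q$-Weil polynomial" is used only to guarantee this root structure and nothing more. One should also note that when $g$ is odd the two bounds in \eqref{equation:lower_bound_a2} could in principle be compared with the trivial bound coming from $|\sigma_2|$ being controlled by $|\sigma_1|$, but since the statement only claims the displayed inequalities this comparison is not needed.
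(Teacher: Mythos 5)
Your proposal is correct and follows essentially the same route as the paper: the authors likewise observe that the identities $a_1=-2\sigma_1$, $a_2=gq+4\sigma_2$, the formal relation $2ga_2+\delta_{DM}=(g-1)a_1^2+2g^2q$ with $\delta_{DM}=4g^2V(\alpha)\ge 0$, and the Kaiblinger--Spangl variance bounds depend only on the root structure of a $q$-Weil polynomial, not on its being the $L$-polynomial of a curve. You merely make explicit (correctly) the dictionary that the paper leaves implicit.
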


\begin{remark}
For instance let us consider the case where $g=3$ 
and  compare with the lower bound
$a_2 \geq 4 \sqrt{q} \left| a_1 \right| -9q$
 provided in \cite{Haloui_JNT}.
The bound (\ref{equation:lower_bound_a2}) reads
$ a_2 \geq a_1^2/3 - \frac{7}{3}q$
and 
thus
is better if and only if
$a_1^2/3 - 7q/3 \geq 4 \sqrt{q} \left| a_1 \right| -9q$.
Taking into account that $\left| a_1 \right| \leq 6 \sqrt{q}$ 
(see Condition (1) in Theorem 1.1 in \cite{Haloui_JNT})
the condition becomes
$\left| a_1 \right| \leq 2 \sqrt{q}$.
This may occur as 
the 
example\footnote{\url{https://www.lmfdb.org/Variety/Abelian/Fq/3/4/a\_ae\_a}}
 already given above
of 
an isogeny class of an abelian variety 
defined over $\mathbb{F}_4$,
of dimension $3$ and 
whose $L$-polynomial is given by
$ (1-2T)^2(1+2T)^2(1+4T^2)$
leads to 
$a_1=0$ and $a_2=-4$.
In this case our bound (\ref{equation:lower_bound_a2})  yields $a_2 \geq -28/3$
whereas the bound $a_2 \geq 4 \sqrt{q} \left| a_1 \right| -9q$ in \cite{Haloui_JNT}
gives $a_2 \geq -36$.

To be fair, we stress that 
(\ref{equation:lower_bound_a2}) 
only provides a lower bound on $a_2$,
whereas the sets of inequalities given in 
Theorem 1.1 in \cite{Ruck}, 
Theorem 1.1 in \cite{Haloui_JNT},
Theorem 1.1 in \cite{Haloui-Singh},
Theorem 1.2 in \cite{Sohn}
and studied in depth by Marseglia in \cite{Marseglia}
completely characterize the $q$-Weil polynomials of small degrees.

\end{remark}
\section{Ihara-maximal curves}\label{section:Ihara}

This section is devoted to the study
of curves whose number of rational points
reaches the Ihara bound (\ref{Iharabound}), which 
 are
called Ihara-maximal curves.

\subsection{A characterization of Ihara-maximal curves}

As stated in Proposition \ref{Ihara-max} below,
the Ihara-maximal curves will appear
as the curves which are both Hallouin-Perret-maximal
and Diophantine-stable.

\begin{proposition}\label{Ihara-max}
Let $X$ be a curve of
 genus $g\geq 1$ 
defined over ${\mathbb F}_q$.
The following assertions are equivalent.
\begin{enumerate}[label=(\roman*)]
\item $X$ is a Ihara-maximal curve.
\item$X$ is both a Hallouin-Perret-maximal curve and a Diophantine-stable curve with respect to the extension 
${\mathbb F}_{q^2} / {\mathbb F}_q$.
\item $Z_X(T)=\frac{(1-2\alpha T +qT^2)^g}{(1-T)(1-qT)}$
 where $\alpha=\frac{1}{4}- \frac{\sqrt{ (8q+1)g^2+ (4q^2-4q)g}}{4g}$.
\end{enumerate}
\end{proposition}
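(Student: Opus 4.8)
The plan is to prove the equivalence by establishing $(i)\Leftrightarrow(ii)$ first, since this is essentially an algebraic reformulation of the derivation of the Ihara bound, and then to obtain $(iii)$ by an explicit computation of the common value $\alpha$ under the assumptions of $(ii)$. The Ihara bound $(\ref{Iharabound})$ was derived (as recalled in the introduction, following Subsection 2.3 of \cite{Hallouin-Perret}) by combining the non-negativity of the Gram determinant $\Gram(\gamma_0,\gamma_1,\gamma_2)$ — equivalently, the inequality $(\ref{DM-bound})$, i.e.\ $\delta_{DM}(X)\geq 0$ — with the arithmetic constraint $\sharp X(\mathbb{F}_{q^2})\geq \sharp X(\mathbb{F}_q)$. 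Equality in the resulting bound forces equality in \emph{both} of these ingredients simultaneously: saturating $(\ref{DM-bound})$ means $X$ is a DM-curve, and saturating $\sharp X(\mathbb{F}_{q^2})\geq \sharp X(\mathbb{F}_q)$ means $X$ is a DS-curve. Conversely, if both hold, chasing the same chain of inequalities shows the Ihara bound is attained.

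First I would make this precise by working directly with the quantities $N_1 = \sharp X(\mathbb{F}_q)$ and $N_2 = \sharp X(\mathbb{F}_{q^2})$. Set $m = N_1 - (q+1)$ and $M = N_2 - (q^2+1)$. Inequality $(\ref{DM-bound})$ reads $M \leq 2gq - m^2/g$, and $N_2 \geq N_1$ reads $M \geq m + q^2 - q$. Hence $m + q^2 - q \leq 2gq - m^2/g$, i.e.\ $m^2/g + m + (q^2 - q - 2gq) \leq 0$; solving this quadratic inequality in $m$ yields exactly the upper bound $m \leq \frac{\sqrt{(8q+1)g^2 + (4q^2-4q)g} - g}{2}$ of $(\ref{Iharabound})$. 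Now, $X$ is Ihara-maximal iff this last inequality is an equality, which happens iff the quadratic $m^2/g + m + (q^2-q-2gq)$ vanishes at $m = N_1-(q+1)$; but this quadratic is the sum of the (non-negative) slack $\delta_{DM}(X)/(4g) \geq 0$ coming from $(\ref{DM-bound})$ and the (non-negative) slack $N_2 - N_1 \geq 0$ coming from the arithmetic constraint. A sum of two non-negative quantities vanishes iff each does: $\delta_{DM}(X) = 0$ (i.e.\ $X$ is a DM-curve, by Definition \ref{definition:DM} and Proposition \ref{Alpha}) and $N_2 = N_1$ (i.e.\ $X$ is a DS-curve). This gives $(i)\Leftrightarrow(ii)$.

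For $(ii)\Leftrightarrow(iii)$ I would use Proposition \ref{Alpha}: $X$ is a DM-curve iff $Z_X(T) = \frac{(1-2\alpha T + qT^2)^g}{(1-T)(1-qT)}$ with $2\alpha = \frac{q+1-N_1}{g} = -\frac{m}{g}$ an integer. Under this form, the DS condition $N_2 = N_1$ becomes a single equation in $\alpha$: since $N_1 = q+1 - 2g\alpha$ and $N_2 = q^2 + 1 - g(4\alpha^2 - 2q)$ (using $\omega_j^2 + \bar\omega_j^2 = 4\alpha_j^2 - 2q$ as in the proof of Proposition \ref{Alpha}), the equality $N_2 = N_1$ reads $q^2 + 1 - 4g\alpha^2 + 2gq = q + 1 - 2g\alpha$, i.e.\ $4g\alpha^2 - 2g\alpha - (q^2 + 2gq - q) = 0$. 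This is a quadratic in $\alpha$ with discriminant $4g^2 + 16g(q^2 + 2gq - q) = 4g^2(1 + 4(4q+1)) + \ldots$; solving and selecting the root for which $N_1$ is maximal (equivalently $\alpha$ minimal, equivalently $2\alpha = -m/g$ with $m$ as large as the Ihara bound permits) yields $\alpha = \frac14 - \frac{\sqrt{(8q+1)g^2 + (4q^2-4q)g}}{4g}$. Conversely this value of $\alpha$ clearly satisfies both the DM-shape and the DS-equation.

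\textbf{The main obstacle} I anticipate is purely bookkeeping: keeping the algebra with the two radicals consistent — i.e.\ checking that the root selected in solving the quadratic for $\alpha$ in the $(ii)\Rightarrow(iii)$ direction is precisely the one corresponding to the larger value of $N_1$, and hence to the Ihara \emph{upper} bound rather than a spurious lower solution, and verifying that this chosen $\alpha$ automatically satisfies $|\alpha| \leq \sqrt q$ (so that the claimed zeta function is genuinely realizable in shape). One should also double-check that the expression under the square root matches $(\ref{Iharabound})$ verbatim after substituting $m = -2g\alpha$; I expect a short computation $\bigl(\tfrac{m}{g}\bigr)^2 g + \tfrac{m}{g}\cdot g\cdot(-2) + \cdots$ to reconcile the two forms, but it must be carried out carefully to match signs.
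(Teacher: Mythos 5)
Your strategy is sound and is essentially the paper's own: the paper proves $(i)\Rightarrow(ii)$ by noting that Ihara's derivation chains a quadratic inequality, the arithmetic inequality $N_1\leq N_2$ and the Cauchy--Schwarz inequality $g\sum_j\alpha_j^2\geq\bigl(\sum_j\alpha_j\bigr)^2$, all forced to equality at the Ihara bound; since by Theorem \ref{delta} the Cauchy--Schwarz slack is exactly $\delta_{DM}(X)/4$, your ``total slack $=$ DM-slack $+$ DS-slack'' decomposition is the same mechanism written out in terms of $N_1,N_2$, and your $(ii)\Leftrightarrow(iii)$ (solve a quadratic, keep the admissible root) is the paper's computation with $\alpha$ in place of $N_1$. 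What your version buys is a self-contained derivation of (\ref{Iharabound}) from (\ref{DM-bound}) that does not require inspecting Ihara's original proof.

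However, the central identities are wrong as written and must be repaired. First, $N_2\geq N_1$ gives $M\geq m-(q^2-q)$, not $M\geq m+q^2-q$; with your sign the quadratic $m^2/g+m+(q^2-q-2gq)\leq 0$ has largest root $\tfrac{1}{2}\bigl(\sqrt{(8q+1)g^2-(4q^2-4q)g}-g\bigr)$, which is \emph{not} the Ihara bound (it would be a strictly stronger bound, violated e.g.\ by Hermitian curves). Second, the slack of (\ref{DM-bound}) is $2gq-m^2/g-M=\delta_{DM}(X)/g$, not $\delta_{DM}(X)/(4g)$, and the correct identity is $-\bigl(m^2/g+m-(q^2-q)-2gq\bigr)=\delta_{DM}(X)/g+(N_2-N_1)$; note that your phrasing makes the quadratic simultaneously $\leq 0$ and a sum of non-negative terms, which is inconsistent unless it vanishes identically. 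With these corrections the argument goes through verbatim. Finally, in the directions $(ii)\Rightarrow(i)$ and $(ii)\Rightarrow(iii)$ you must actually discard the other root, $\alpha=\tfrac14+\tfrac{1}{4g}\sqrt{(8q+1)g^2+(4q^2-4q)g}$, equivalently $m=\tfrac{1}{2}\bigl(-g-\sqrt{(8q+1)g^2+(4q^2-4q)g}\bigr)$: for $g\geq 2$ it forces $N_1<0$, and for $g=1$ it forces $N_1=N_2=0$, impossible since the Weil bound over ${\mathbb F}_{q^2}$ gives $N_2\geq(q-1)^2\geq 1$. You flag this as bookkeeping (and the paper's proof is equally terse, invoking only ``non-negative solution''), but it is the one point that requires an argument rather than a choice.
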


\begin{proof}
We still write $\alpha_1,\ldots,\alpha_g$ for the  real parts of the Frobenius eigenvalues of $X$.
For the purpose of proving the first implication $(i) \Rightarrow (ii)$,
we recall that
the original proof of Ihara (\textsection 2 in \cite{Ihara})
rests on three inequalities.
More precisely, 
the Cauchy-Schwarz inequality
$g\sum_{j=1}^g\alpha_j^2\geq \bigl(\sum_{j=1}^g \alpha_j\bigr)^2$
together with
the arithmetic inequality $N_1 \leq N_2$ which reads
\begin{equation}\nonumber
N_1 \leq q^2+1+2qg -4\sum_{j=1}^g \alpha^2_j
\end{equation}
lead to the following quadratic inequality in $N_1$
\begin{equation}\label{equation:quadratic}
N_1^2-(2q+2-g)N_1 +(q+1)^2-(q^2+1)g-2qg^2 \leq 0.
\end{equation}
When $X$ is Ihara-maximal, the three inequalities become equalities.
But the equality $N_1=N_2$ defines the Diophantine-stability.
And the Cauchy-Schwarz inequality becomes an equality if and only if
 the $\alpha_j$'s are all equal,
 which characterizes HP-maximal curves by Proposition \ref{Alpha}.
 
To prove 
that
$(ii) \Rightarrow (iii)$
one can notice that
$N_2 - (q^2+1) = 2gq-\frac{1}{g}\bigl(N_1 - (q+1)\bigr)^2$
and
$N_1=N_2$ imply 
that inequality (\ref{equation:quadratic}) 
becomes an equality.
The only non-negative solution is
$N_1=  q+1+\frac{1}{2}\left(\sqrt{ (8q+1)g^2+ (4q^2-4q)g} -g\right)$
and thus one can use Proposition \ref{Alpha}
to obtain the claimed form for the zeta function of $X$.

Finally, if $(iii)$ is verified 
we are again in the context of Proposition  \ref{Alpha}
and then the knowledge of the value 
$\alpha=\frac{q+1-\sharp X({\mathbb F}_q)}{2g}$ enables
to determine $N_1$ and to conclude that $X$ is Ihara-maximal.
\end{proof}

\begin{remark}
As a straightforward application we identify a family 
of Ihara-maximal curves.
Indeed, 
for $t \geq 1$ and 
$q=2^{2t+1}$ 
let us consider the Deligne-Lusztig curve of Suzuki type (called Suzuki curve from now), 
associated to the Suzuki group
$Sz(q)$, that is the
curve defined over ${\mathbb F}_q$
as the non-singular model $S$
of the plane curve given by the equation
$y^q-y=x^{q_0}(x^q-x)$
where $q_0=2^t$. 
It is well-known 
(see \cite{H-S}, \cite{HKT_book} or Proposition 4.3 of \cite{Hansen})
 that this curve has genus
$g=\frac{\sqrt{q}(q-1)}{\sqrt{2}}$,
satisfies $L_{ S}(T)=(1+ 2q_0 T + qT^2)^g$
and has $q^2+1$ rational points over ${\mathbb F}_{q}$
and over  ${\mathbb F}_{q^2}$.

So this curve is Diophantine-stable, and Proposition (\ref{Alpha})
ensures the Hallouin-Perret-maximality. 
A Suzuki curve is thus Ihara-maximal.
\end{remark}

\subsection{An analog of a theorem of R\"uck and Stichtenoth}\label{subsection:analog}

Ihara has proved in \cite{Ihara} that a Weil-maximal curve $X$ defined over ${\mathbb F}_q$ 
 has a genus
  less than or equal  to $\frac{\sqrt{q}(\sqrt{q}-1)}{2}$.
 Indeed, 
 the Ihara bound $N_2^*:=q+1+(\sqrt{(8q+1)g^2+4qg(q-1)}-g)/2 $ 
 becomes sharper 
 than the Weil-bound 
 $N_1^*:=q+1+2g\sqrt{q} $
 for $g > g_2:=\frac{\sqrt{q}(\sqrt{q}-1)}{2}$. 
 Hallouin and Perret have proposed an even sharper bound
 $N_3^*$ (see \cite{Hallouin-Perret} and \cite{A-I}) valid when $g \geq g_3:=\frac{\sqrt{q}(q-1)}{\sqrt{2}}$,
which implies that the genus of a Ihara-maximal curve is less than or equal to $g_3$.

And in this direction they provide
(Theorem 14 of \cite{Hallouin-Perret})
an increasing sequence $(g_n)_{n \geq 1}$ of integers
and a sequence of upper bounds $N^*_n(g)$
such that $N^*_n(g)$ is a valid 
bound for $\#X(\mathbb{F}_q)$ when the genus $g$ 
of the curve is greater than or equal to $g_n$.
The bounds are proven to be sharper and sharper 
(see point 18 of Theorem 14 in \cite{Hallouin-Perret})
and the following expression
for $g_n$ is 
established:
 $
 \displaystyle{g_n=\sqrt{q}^{n+1} \sum_{k=1}^{n}\frac{1}{\sqrt{q}^k}\cos\left( \frac{k\pi}{n+1}\right)}.$

Let us now come back to the case of the Weil bound.
R\" uck and Stichtenoth have characterized the Weil-maximal curves of maximal genus
the following way.

 \begin{theorem}\label{th:RS} \textbf{(R\" uck and Stichtenoth, main theorem in \cite{R-S})}
We suppose that $q$ is a square
and we consider a curve $X$ defined over ${\mathbb F}_q$.
Suppose that $X$ has genus $g=\frac{\sqrt{q}(\sqrt{q}-1)}{2}$.
Then $X$ is Weil-maximal if and only if $X$ is ${\mathbb F}_q$-isomorphic to the 
Hermitian curve  whose equation is
$y^{\sqrt{q}}+y=x^{\sqrt{q}+1}.$
\end{theorem}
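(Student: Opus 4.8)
The plan is to establish the two implications separately. For the ``if'' direction I would check directly that the smooth plane curve $H\colon x^{q_0+1}+y^{q_0+1}+z^{q_0+1}=0$, where $q_0:=\sqrt q\in{\mathbb Z}$, has genus $q_0(q_0-1)/2=g$ (the degree--genus formula for a smooth plane curve of degree $q_0+1$), and that a count of its projective ${\mathbb F}_{q_0^2}$-points gives $\sharp H({\mathbb F}_q)=q_0^3+1=q+1+2g\sqrt q$; hence $H$ attains the Weil bound (\ref{Weilbound}) and is a Weil-maximal curve of the prescribed genus.

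For the converse, the starting point is that by Remark \ref{Weil-max-min} a Weil-maximal curve $X$ over ${\mathbb F}_q={\mathbb F}_{q_0^2}$ has $L_X(T)=(1+q_0T)^{2g}$, so its characteristic polynomial of Frobenius is $(T+q_0)^{2g}$; since Frobenius is semisimple, the $q$-power Frobenius endomorphism $\Phi$ acts on $\Jac(X)$ as multiplication by $-q_0$. Translating this into linear equivalence of divisors, for a rational point $P_0\in X({\mathbb F}_q)$ (which exists since $\sharp X({\mathbb F}_q)>0$) and any closed point $P$ one obtains the R\"uck--Stichtenoth \emph{fundamental equivalence} $\Phi(P)+q_0P\sim(q_0+1)P_0$; in particular $(q_0+1)P\sim(q_0+1)P_0$ whenever $P$ is rational. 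The first real step is to use these equivalences, together with Riemann--Roch and the exact value $g=q_0(q_0-1)/2$, to pin down the Weierstrass semigroup of $X$ at $P_0$: the goal is to show it equals $\langle q_0,q_0+1\rangle$, equivalently that there are functions $x,y\in{\mathbb F}_q(X)$ with pole divisors exactly $q_0P_0$ and $(q_0+1)P_0$.

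Granting this, the morphism $\pi=(1:x:y)\colon X\to{\mathbb P}^2$ has degree dividing both $[{\mathbb F}_q(X):{\mathbb F}_q(x)]=q_0$ and $[{\mathbb F}_q(X):{\mathbb F}_q(y)]=q_0+1$, two coprime integers, hence is birational onto its image $C$; pulling back a general line shows that $C$ has degree $q_0+1$. The geometric genus of $C$ is $g=q_0(q_0-1)/2$, which is exactly the arithmetic genus of a plane curve of degree $q_0+1$, so $C$ is smooth. It remains to bring $C$ to Hermitian normal form: choosing projective coordinates adapted to the linear series $|(q_0+1)P_0|$ with $P_0$ placed at a convenient point, the fundamental equivalence rigidifies the defining ternary form of $C$ so that its coordinate functions satisfy, after an ${\mathbb F}_q$-linear change of variables, a relation of ``Hermitian shape''; a short descent argument (Hilbert~90, to the effect that any two such forms over ${\mathbb F}_{q_0^2}$ are ${\mathbb F}_q$-projectively equivalent) then identifies $C$, and therefore $X$, with $x^{q_0+1}+y^{q_0+1}+z^{q_0+1}=0$ over ${\mathbb F}_q$.

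The main obstacle will be the middle step: deducing that the Weierstrass semigroup at $P_0$ is precisely $\langle q_0,q_0+1\rangle$ from maximality and the value of $g$, which is where one genuinely needs estimates relating $\sharp X({\mathbb F}_q)$ to the genus of a Weierstrass semigroup (Lewittes/St\"ohr--Voloch-type bounds), followed by the concluding normalization, i.e.\ upgrading the plane model from being merely of ``Hermitian shape'' to being ${\mathbb F}_q$-projectively equivalent to the standard Hermitian curve.
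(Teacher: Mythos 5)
The paper does not prove this statement at all: Theorem \ref{th:RS} is quoted as the R\"uck--Stichtenoth theorem from \cite{R-S} and is used as a black box (in the proof of Proposition \ref{th:list_Ihara_g}, and as the model for Theorem \ref{theorem:suzuki_maximal}), so there is no internal argument to compare yours against. Your outline is, in substance, the skeleton of the original R\"uck--Stichtenoth proof: Weil-maximality forces $L_X(T)=(1+\sqrt{q}\,T)^{2g}$, semisimplicity of Frobenius gives $\Phi=-\sqrt{q}$ on $\Jac(X)$, hence the fundamental equivalence $\Phi(P)+\sqrt{q}\,P\sim(\sqrt{q}+1)P_0$, from which one extracts a birational degree-$(\sqrt{q}+1)$ plane model and identifies it with the Hermitian curve. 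The ``if'' direction and the coprimality/birationality and genus-equals-arithmetic-genus bookkeeping are correct as written.

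Judged as a proof, however, the two steps that carry all the content are missing, and you say so yourself. First, the determination of the Weierstrass semigroup at $P_0$ (equivalently $\ell(\sqrt{q}\,P_0)\geq 2$ and $\ell((\sqrt{q}+1)P_0)\geq 3$) is announced as ``the goal'' but never derived; this is precisely where maximality and the exact value $g=\sqrt{q}(\sqrt{q}-1)/2$ must be exploited, and the Lewittes/St\"ohr--Voloch-type estimates you invoke are not set up, so nothing rules out a different gap sequence at $P_0$. Second, the final normalization is asserted, not proved: the uniqueness up to projective equivalence of non-degenerate Hermitian forms in three variables over ${\mathbb F}_q$ is indeed standard, but the substantive claim --- that the plane model is cut out by such a form (R\"uck and Stichtenoth derive an equation of the shape $y^{\sqrt{q}}z+yz^{\sqrt{q}}=x^{\sqrt{q}+1}$ by analyzing how $\Phi$ and the equivalence $\Phi(P)+\sqrt{q}\,P\sim(\sqrt{q}+1)P_0$ act on the functions $x,y$) --- is exactly what ``rigidifies the defining ternary form'' would have to mean, and no argument for it is given. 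So you have a faithful reconstruction of the known strategy with the two hard steps left open; within this paper the appropriate move is simply to cite \cite{R-S}, as the authors do.
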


We notice that it is possible to obtain
an analogue  of this result 
for Ihara-maximal curves
by reformulating a maximality theorem of 
Fuhrmann and Torres (Theorem 2 in \cite{F-T})
for Suzuki curves. This theorem asserts that if
$t\geq 1$, $q=2^{2t+1}$
 and $q_0=2^t$ then
any curve 
of genus $g=q_0(q-1)$ and
 such that $\#X(\mathbb{F}_q)=q^2+1$ is isomorphic 
 to the Suzuki curve.
 It is thus sufficient to 
 verify (with a tedious but straightforward computation)
 that in this setting
 $q^2+1$ equals the Ihara-bound 
 $ q+1+(\sqrt{(8q+1)g^2+4qg(q-1)}-g)/2$ 
 to obtain the following
analogue of the R\" uck and Stichtenoth theorem.

\begin{theorem}\label{theorem:suzuki_maximal}
\textbf{(Reformulation of Theorem 2 in \cite{F-T})}
We consider $t\geq 1$ and $q=2^{2t+1}$. 
Let $X$ be a curve defined over ${\mathbb F}_q$.
Suppose that $X$ has
 genus $g=\frac{\sqrt{q}(q-1)}{\sqrt{2}}$.
Then
$X$ is Ihara-maximal 
if and only if $X$ is ${\mathbb F}_q$-isomorphic to the Suzuki curve $S$
which is the non-singular model of the curve of
 equation 
$y^q-y=x^{q_0}(x^q-x)$
where $q_0=2^t$.
\end{theorem}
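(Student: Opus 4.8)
The plan is to deduce the theorem from the maximality statement of Fuhrmann and Torres (Theorem 2 in \cite{F-T}): I would show that, for the genus prescribed here, being Ihara-maximal is \emph{exactly} the same as having $q^2+1$ rational points over $\mathbb{F}_q$, and then quote F-T, which classifies such curves.

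First I would record the relation that drives everything. Since $q=2^{2t+1}=2q_0^2$ with $q_0=2^t$, we have $\sqrt{q}/\sqrt{2}=q_0$, hence
\[
g=\frac{\sqrt{q}(q-1)}{\sqrt{2}}=q_0(q-1),
\]
a positive integer --- precisely the genus in the hypothesis of Theorem 2 of \cite{F-T} --- and moreover $2g^2=2q_0^2(q-1)^2=q(q-1)^2$.

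Next comes the one computation. Using $2g^2=q(q-1)^2$ one checks by expansion that $(8q+1)g^2+(4q^2-4q)g=\bigl(g+2q(q-1)\bigr)^2$: the cross term $4q(q-1)g$ matches, and what remains is $8qg^2=4q^2(q-1)^2$, i.e.\ $2g^2=q(q-1)^2$. Hence $\sqrt{(8q+1)g^2+(4q^2-4q)g}-g=2q(q-1)$, so the Ihara inequality (\ref{Iharabound}) for a curve of genus $g$ reads simply $\sharp X(\mathbb{F}_q)\le q^2+1$. (Running the same simplification through Proposition \ref{Ihara-max} one finds that the corresponding value $\alpha=\tfrac14-\tfrac{1}{4g}\sqrt{(8q+1)g^2+(4q^2-4q)g}$ equals $-q/(2q_0)=-q_0$, so an Ihara-maximal curve of genus $g$ has $L$-polynomial $(1+2q_0T+qT^2)^g$, matching $L_S$; this is consistent but not strictly needed.) The two implications now follow: the Suzuki curve $S$ has genus $q_0(q-1)=g$ and $\sharp S(\mathbb{F}_q)=q^2+1$ (see \cite{H-S}, \cite{HKT_book}, or Proposition 4.3 of \cite{Hansen}), so it attains the bound and is Ihara-maximal, as is anything $\mathbb{F}_q$-isomorphic to it; and conversely, if $X$ has genus $g$ and is Ihara-maximal, then $\sharp X(\mathbb{F}_q)=q^2+1$, whereupon Theorem 2 of \cite{F-T} --- any curve over $\mathbb{F}_q$ of genus $q_0(q-1)$ with exactly $q^2+1$ rational points is $\mathbb{F}_q$-isomorphic to $S$ --- gives that $X$ is $\mathbb{F}_q$-isomorphic to $S$.

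I do not expect a genuine obstacle here: the substantive content, the classification of genus-$q_0(q-1)$ curves over $\mathbb{F}_q$ with $q^2+1$ points, is entirely supplied by \cite{F-T}, and on our side the work is merely the bookkeeping of the radical in (\ref{Iharabound}), which the prescribed genus is rigged to collapse. The only points to double-check are that $g=q_0(q-1)$ is genuinely a positive integer (so the hypothesis of \cite{F-T} is met verbatim) and that the Ihara bound at this genus is an honest integer, $q^2+1$, so that ``$\le$'' forces equality with no rounding slack.
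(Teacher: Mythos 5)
Your proposal is correct and follows essentially the same route as the paper: the paper likewise observes that $g=q_0(q-1)$ and that a (tedious but straightforward) computation shows the Ihara bound at this genus equals $q^2+1$, so Ihara-maximality is equivalent to $\sharp X(\mathbb{F}_q)=q^2+1$, after which Theorem 2 of \cite{F-T} gives the classification; your explicit verification that $(8q+1)g^2+(4q^2-4q)g=\bigl(g+2q(q-1)\bigr)^2$ is exactly that computation made explicit.
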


\subsection{Determination of Ihara-maximal curves 
for small values of $g$ or $q$}

\begin{proposition}\label{th:list_Ihara_g}
We give in the  Table \ref{table:ihara_g} the complete list, up to isomorphism, 
of Ihara-maximal curves defined 
over ${\mathbb F}_q$ of genus $g\leq 18$,
except for the case $g=7$.
In this case, 
we know that $q=7$ and
that there exists at least one Ihara-maximal curve,
but we do not know if there is unicity.
\end{proposition}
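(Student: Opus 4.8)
The plan is to combine the arithmetic characterization of Ihara-maximal curves (Proposition \ref{Ihara-max}) with the genus bounds recalled above, reducing the statement to finitely many pairs $(g,q)$, and then to settle existence and uniqueness in each of them.

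First I would record that, by Proposition \ref{Ihara-max} together with Proposition \ref{Alpha}, a curve $X/\mathbb{F}_q$ of genus $g$ is Ihara-maximal if and only if its $L$-polynomial is $(1-2\alpha T+qT^2)^g$ with $2\alpha=\tfrac12-\tfrac{\sqrt{D}}{2g}$ and $D:=(8q+1)g^2+4qg(q-1)$, the number $2\alpha$ being moreover a rational integer. The integrality of $2\alpha$ forces $D$ to be a perfect square with $\sqrt{D}/g$ an odd integer (equivalently, $8q+1+4q(q-1)/g$ is the square of an odd integer), and the Riemann Hypothesis forces $|2\alpha|\le 2\sqrt q$. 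On the other hand, the number $N_1$ of rational points of an Ihara-maximal curve attains both the Ihara and the Weil bounds, so the Ihara bound cannot exceed the Weil bound; a direct computation shows this is equivalent to $g\ge g_2(q)=\tfrac{\sqrt q(\sqrt q-1)}{2}$, and one also has $g\le g_3(q)=\tfrac{\sqrt q(q-1)}{\sqrt 2}$ by the Hallouin--Perret refinement recalled in Subsection~\ref{subsection:analog}. For each fixed $g\le 18$ the inequality $g\ge g_2(q)$ already confines $q$ to an explicitly bounded set of prime powers; running through them and keeping those for which $D$ is a square, $\sqrt D/g$ is odd and $|2\alpha|\le 2\sqrt q$ yields a short, explicit list of admissible triples $(g,q,\alpha)$, hence of candidate $L$-polynomials $(1-2\alpha T+qT^2)^g$.

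It then remains, for each candidate, to decide whether a curve with that $L$-polynomial exists and, if so, how many there are up to isomorphism. The candidates fall into three types. When $g=g_2(q)$ --- which forces $q$ to be a square and $\alpha=-\sqrt q$ --- Ihara-maximality coincides with Weil-maximality, so Theorem~\ref{th:RS} of R\"uck and Stichtenoth yields existence and uniqueness (the Hermitian curve); in the range $g\le 18$ this produces, for instance, the rows with $(q,g)\in\{(4,1),(9,3),(16,6),(25,10)\}$. When $g=g_3(q)$ --- which forces $q=2^{2t+1}$ --- Theorem~\ref{theorem:suzuki_maximal} yields existence and uniqueness (the Suzuki curve), contributing only $q=8$, $g=14$ for $g\le 18$. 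For the remaining ``intermediate'' candidates $g_2(q)<g<g_3(q)$, which occur only for small genus, one argues case by case: either a curve realizing the value is exhibited --- from a known family (a quotient of a Hermitian or Suzuki curve, a fibre product of elliptic curves) or via an explicit model drawn from the database \cite{LMFDB} --- and shown to be unique by the available classification of curves of that genus with that number of rational points, or the candidate is discarded by proving that the isogeny class $(1-2\alpha T+qT^2)^g$ contains no Jacobian (because the implied point count exceeds the known maximum, or by a Howe-type obstruction). Assembling all the cases produces Table~\ref{table:ihara_g}.

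The main obstacle is this last step: for a prescribed candidate $(1-2\alpha T+qT^2)^g$, deciding existence and, above all, uniqueness of a realizing curve. Uniqueness is within reach in the two extreme regimes only through the rigidity theorems of R\"uck--Stichtenoth and of Fuhrmann--Torres, and otherwise depends on genus-by-genus classification results that become scarce as $g$ grows. This is exactly why the case $g=q=7$ must be left partially open: here the reduction leaves the single candidate $L_X(T)=(1+4T+7T^2)^7$ with $N_1=36$, and although a genus-$7$ curve over $\mathbb{F}_7$ with $36$ rational points is known to exist, no classification is available to decide whether it is unique.
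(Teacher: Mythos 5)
Your proposal follows essentially the same route as the paper: enumerate the finitely many admissible pairs $(g,q)$ with $g\geq\frac{\sqrt q(\sqrt q-1)}{2}$ and $(8q+1)g^2+4qg(q-1)$ a perfect square, discard candidates whose Ihara bound exceeds known upper bounds on $N_q(g)$, invoke the R\"uck--Stichtenoth and Fuhrmann--Torres rigidity theorems at the extreme genera, settle the remaining low-genus cases (genus $1$ and $(g,q)=(3,4)$) via the classification of Diophantine-stable curves as in \cite{BLV}, and leave uniqueness open for $g=q=7$ while quoting an explicit existence example. One small slip: an Ihara-maximal curve does not in general attain the Weil bound (see the rows of Table \ref{table:ihara_g} not marked Weil-maximal); the constraint $g\geq\frac{\sqrt q(\sqrt q-1)}{2}$ follows simply because $N_1$ equals the Ihara bound while still satisfying the Weil bound, so the Ihara bound cannot exceed the Weil bound --- your conclusion is correct, only the stated justification needs this adjustment.
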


\begin{center}
\begin{table}[h]
\footnotesize
\centering
\begin{tabular}{|c|c|c|c|c|c|}
\hline
$g$ & $q$ & $N_1=N_2$ & $L_X(T)$ &  Ihara-maximal curve & Weil-max   \\ 
\hline
\hline
 & $2$ & $5$ & $1+2T+2T^2$ &  $y^2+y=x^3+x$ &   \\
& & & & & \\
$1$ & $3$ & $7$ & $1+3T+3T^2$ & $y^2=x^3+2x+1$  &   \\
& & & & & \\
 & $4$ & $9$ & $1+4T+4T^2$ & $x^3+y^3+z^3=0$  & $\times$   \\
\hline
 & $4$ & $14$ & $(1+3T+4T^2)^3$ &  $x^4+x^2y+xy^3+x+y^2=0$ &   \\
& & & & & \\
& $4$ & $14$ & $(1+3T+4T^2)^3$ & $x^4+x^2y^2+y^4+x^2y+xy^2+$  &  \\
$3$ & & & & $+x^2+xy+y^2+1=0$  &\\
& & & & & \\
 & $9$ & $28$ & $(1+3T)^6$ & $x^4+y^4+z^4=0$  & $\times$   \\
\hline
$6$ & $16$  & $65$   & $(1+8T+16T^2)^6$ & $x^5+y^5+z^5=0$ & $\times$  \\
\hline
$7$ & $7$ & $36$ & $(1+4T+7T^2)^7$ & There exists at least one curve: & \\
& & & &the fibre product &  \\
& & & & $y_1^3=5(x+2)(x+5)/x$ &  \\
& & & & $y_2^3=3x^2(x+5)/(x+3)$ &  \\
\hline
$10$ & $25$ & $126$ & $(1+10T+25T^2)^{10}$  & $x^6+y^6+z^6=0$ & $\times$  \\
\hline
$14$ & $8$ & $65$ & $(1+4T+8T^2)^{14}$  &  $y^8-y=x^2(x^8-x)$ &  \\
\hline
\end{tabular}
\caption{
\footnotesize{List (up to isomorphism) of all  Ihara-maximal curves  of genus $g \leq 18$ except for $g=7$.}}
\label{table:ihara_g}
\end{table}
\end{center}

We also provide in each case
the number $N_1=N_2$ of rational points over $\mathbb{F}_q$ 
(and $\mathbb{F}_{q^2}$),
the $L$-polynomial of the curve,
an equation of an affine model of the curve 
and we indicate with a $\times$ in the sixth column
whether the curve is  Weil-maximal.

\begin{proof}
We notice that if $g < (q-\sqrt{q})/2$
then the Weil bound $N_1^*$ (with the notations of Subsection \ref{subsection:analog})
is sharper than the Ihara bound $N_2^*$,
and so a curve cannot be Ihara-maximal.
It enables to discard these cases.
Thus, with the help of a Python program
 we list all the couples $(g,q)$ for which
 $(q-\sqrt{q})/2 \leq g \leq 18$ and for which $(8q+1)g^2+4qg(q-1)$ is a square. 
 We also list the corresponding values of $N_1(=N_2)$.

Some of these couples can be discarded because
the Ihara bound is greater than a known upper bound 
 of 
the maximal number
$N_q(g)$
of rational points of a curve of genus $g$ over ${\mathbb F}_q$.
We sum up in 
Table \ref{table:discard}
these discarded couples with
the helpful references (obtained for the most part thanks to ManyPoints \cite{manypoints}).
 
\begin{center}
\begin{table}[h]
\footnotesize
\begin{tabular}{|l|c|c|c|c|c|c|c|c|c|c|c|c|}
\hline
$g$ & 4 & 6 & 8 & 8 & 8 & 10 & 10 & 15 & 16 & 16 & 18 & 18 \\
\hline
$q$ & 8 & 9 & 11 & 11 & 19 & 5 & 16 & 25 & 4 & 13 & 29 & 41 \\
\hline
Ihara bound & 29 & 40 & 56 & 29 & 88 & 36 & 87 & 161 & 45 & 102 & 204 & 270 \\ 
\hline
Up. bound  & 25 & 38 & 55 & 88 & 84 & 33 & 86 & 160 & 38 & 101 & 203 & 258 \\
on $N_q(g)$ & & & & & & & & & & & & \\
\hline
Reference & \cite{Savitt} & \cite{Howe-Lauter-EMS} & \cite{kohnlein} & \cite{kohnlein} & \cite{Serre-CRAS} 
& \cite{Howe-Lauter-EMS} & \cite{Howe-Lauter-EMS} & \cite{Howe-Lauter-EMS} & \cite{Serre-CRAS} & \cite{kohnlein} & \cite{Howe-Lauter-EMS} & \cite{Serre} \\
\hline
\end{tabular}
\caption{
\footnotesize{List of discarded cases in proof of Proposition \ref{th:list_Ihara_g}}}
\label{table:discard}
\normalsize
\end{table}
\end{center}
 Let us now treat the remaining cases with the help of 
 Proposition \ref{Ihara-max}
 which ensures that
 a curve is Ihara-maximal
 if and only if it is both HP-maximal and Diophantine-stable.
 The case of curves of genus one is easily handled
 as such a curve is always HP-maximal 
 (see Remark \ref{remark:elliptic}) and as 
 Bars, Lario and Vrioni provide 
 in Proposition 3.1 in \cite{BLV}
 the only isomorphism classes of Diophantine-stable
 elliptic curves.

 When $g=3$ and $q=4$,
 we should have $N_1=N_2=14$. In \cite{BLV} the authors indicate
 the two only isomorphism classes of Diophantine-stable curves
 for the extension $\mathbb{F}_{16} / \mathbb{F}_{4}$.
 
 When $(g,q)= (3,9),(6,16)$ or $(10,25)$ 
 we notice that $g=\sqrt{q}(\sqrt{q}-1)/2$ and 
 that the candidate curve must be Weil-maximal.
So by the Theorem of R\"uck and Stichtenoth
 quoted in Subsection \ref{subsection:analog}
 we know that we deal with an Hermitian curve.
 
 If $g=14$ and $q=8$, it is remarkable that 
 $g=\sqrt{q}(q-1)/\sqrt{2}$, so Theorem \ref{theorem:suzuki_maximal} ensures
 that in this case a Ihara-maximal curve is a Suzuki curve.
 
 Finally, \"Ozbudak, Tem\"ur and Yayla
 provide in \cite{OTY2013} 
 a fibre product of Kummer extensions
 which is 
 an example of Ihara-maximal curve
 of genus $7$ defined over $\mathbb{F}_7$ and such that $N_1=36$.
 \end{proof}

\begin{remark}\label{Ihara_max_q_petit}
We can proceed in the same way
to obtain the  list
of the Ihara-maximal curves defined over $\mathbb{F}_q$
for $q \leq 13$.
The point is to bound the possible values of $g$ for a given value of $q$.
As in the proof of Proposition \ref{th:list_Ihara_g}
we know that $(q-\sqrt{q})/2 \leq g$.
If we refer to the discussion of Subsection \ref{subsection:analog}
and to Theorem 14 in \cite{Hallouin-Perret},
we know that a curve 
of genus $g > \sqrt{q}(q-1)/\sqrt{2}$
cannot be Ihara-maximal, so $g \leq \sqrt{q}(q-1)/\sqrt{2}$.
So this time
we list all the couples $(g,q)$ for which
 $(q-\sqrt{q})/2 \leq g  \leq \sqrt{q}(q-1)/\sqrt{2}$,
 $q \leq 13$ 
  and for which $(8q+1)g^2+4qg(q-1)$ is a square. 
In comparison with the proof of Proposition \ref{th:list_Ihara_g}
the only additional case is that of
 the couple 
$(g,q)=(25,13)$ which would lead to a curve with $144$
rational points. 
But it is known that in this context $N_1 \leq 142$ (see \cite{kohnlein}). 

Thus the complete list, up to isomorphism, 
of Ihara-maximal curves defined 
over ${\mathbb F}_q$ for $q \leq 13$
of genus $g \geq 1$,
except for the case $q=7$, is given by the eight curves of Table \ref{table:ihara_g} corresponding to
the couples $(g,q)\in\{(1,2), (1,3), (1,4), (3,4), (3,9), (7,7), (14,8)\}$.

\end{remark}

\begin{remark}
So we have managed to discuss the existence
of  Ihara-maximal curves of genus $g$ 
for any $g \leq 18$ and any value of $q$.
In this direction, 
the first case we do not know how to treat 
is the one of a possible curve of genus 
$g=19$. 
With the method described above, 
we know that this curve shall be defined
over $\mathbb{F}_{19}$ and such that $N_1=N_2=153$,
 but we are unable to determine whether there exists such a curve.
 
 In the other direction,
 we know if there exists a Ihara-maximal curve
 defined over $\mathbb{F}_q$ 
 for any $q \leq 13 $. 
 To make further progress
 we would need to know if there exists
 a curve of genus
 $24$ defined over $\mathbb{F}_{16}$
with $N_1=N_2=161$.
 \end{remark}

\section{Abelian surfaces isogenous to Jacobians of Hallouin-Perret-maximal curves}\label{section:structure}

We focus in this section on 
abelian surfaces 
which are isogenous to Jacobians of 
genus 2
HP-maximal curves.
Recall that by Proposition \ref{Alpha}, the $L$-polynomial of
such a curve
reads $(1+aT+qT^2)^2$, 
and so the characteristic polynomial of the Jacobian of such a curve
expresses $(T^2+aT+q)^2$.
The next result
 will answer the natural question:
among the polynomials $(T^2+aT+q)^2$,
which ones do correspond to the
characteristic polynomial of the  Jacobian of a HP-maximal curve?

The proof 
will be based upon 
successive works which aim to describe
the characteristic polynomials 
which can be associated to
abelian and Jacobian surfaces. 
Maisner and Nart have characterized in \cite{MNH} 
when a $q$-Weil polynomial of degree 4 corresponds to an abelian surface defined over ${\mathbb F}_q$ 
and when this abelian surface is simple.
Furthermore, Howe, Nart and Ritzenthaler have determined in \cite{HNR} 
when simple abelian surfaces are isogenous to a Jacobian
and also when the isogeny class of a square of an ordinary elliptic curve contains a Jacobian.
Finally, 
we will make use of
the characterization of abelian surfaces
 whose isogeny class contains 
 the product of two supersingular elliptic curves given
 in characteristic 2 by Maisner and Nart (see \cite{M-N}),
 in characteristic 3 by Howe (see \cite{Howe})
 and in greater characteristic  by Howe, Nart and Ritzenthaler  (see \cite{HNR}).
When the abelian surface is not simple, 
we use the characterization of Waterhouse (see \cite{Waterhouse}) of a  polynomial of degree 2 arising as the characteristic polynomial of an elliptic curve.
Putting all of this together,
one obtains
 the following characterization of HP-maximal curves of genus 2
  related to
  the structure of their Jacobians.

\begin{theorem}\label{HP_curves_of_genus_2}
Let $q=p^n$ be a power of a prime $p$.
We consider a polynomial $f(T)=(T^2+aT+q)^2 \in{\mathbb Z}[T]$
(which amounts to saying that $a\in{\mathbb Z}$).
Then $f(T)$ is the characteristic polynomial of a Jacobian of a Hallouin-Perret-maximal curve defined over ${\mathbb F}_q$ if and only if
one of the following conditions holds. \\

\medskip
\begin{small}
\noindent
 {\bf 1) Simple abelian surface case}
\begin{enumerate}
\item[(1.1)] $n$ is even and $p\equiv 1\pmod 4$ and $a=0$.
\item[(1.2)] $n$ is even and $p\equiv 1\pmod 3$  and  $a=\pm \sqrt{p^n}$. 
\end{enumerate}
In these cases, $f(T)$ is the characteristic polynomial of a simple abelian surface defined over ${\mathbb F}_q$ which is supersingular and isogenous to the Jacobian of a Hallouin-Perret-maximal curve.

\bigskip

\noindent
{\bf 2) Split ordinary case}  \\ \ \\
$\vert a \vert \leq 2 \sqrt{q}, $
$(a,p)=1$ and $a^2-4q \notin \{-3, -4, -7 \}.$ \\

In this case, $f(T)$ is the characteristic polynomial of an abelian surface defined over ${\mathbb F}_q$ which is isogenous to 
$E \times E$ where $E$ is an ordinary elliptic curve and its isogeny class contains the Jacobian of a Hallouin-Perret-maximal curve.

\bigskip
 
\noindent
{\bf 3) Split supersingular case} 

\medskip

\begin{enumerate}
\item[(3.1)]   $p =2$  and  $n > 1$:

\begin{enumerate}
\item[(i)] $n$ odd and  $a=0$.
\item[(ii)] $n$ odd and $a=\pm  \sqrt{2p^n}$.
\item[(iii)] $n$ even and $a=0$.
\item[(iv)] $n$ even and $a=\pm p^{\frac{n}{2}}$.
\item[(v)] $n \geq 4$ even and $a=\pm 2 p^{\frac{n}{2}}$.
\end{enumerate}

\bigskip

\item[(3.2)]  $p=3$:
\begin{center}
\begin{enumerate}
\item[(i)] $n \geq 3$ odd and $a=0$.
\item[(ii)]  $n$ even and $a$ verifies one of the following conditions:

\begin{center} $ \left( a=0 \right)$ \ \ \ or \ \ \ $\left( a=\pm p^{\frac{n}{2}} \right)$ \ \ \ or \ \ \ $\left( a =\pm 2 p^{\frac{n}{2}} \ \ \mbox{ and } \ \ 
n \geq 4  \right).
$
\end{center}
\end{enumerate} 
 \end{center}
 
\bigskip

\item[(3.3)] $p > 3$:

\begin{enumerate}
\item[(i)] $n$ even and $a=\pm 2 p^{\frac{n}{2}}$.
\item[(ii)] $n$ even, $p \not\equiv 1 \pmod{3}$ and $a=\pm p^{\frac{n}{2}}$.
\item[(iii)] $n$ odd and $a=0$.
\item[(iv)] $n$ even, $p \not\equiv 1 \pmod{4}$ and $a=0$.
\end{enumerate}

 \end{enumerate}

\noindent
In all these cases, $f(T)$ is the characteristic polynomial of an abelian surface defined over ${\mathbb F}_q$ which is isogenous to 
$E \times E$ where $E$ is a supersingular elliptic curve and its isogeny class contains the Jacobian of a Hallouin-Perret-maximal curve.
\end{small}

\end{theorem}

\begin{proof}
Throughout the proof we will consider a \textit{square} polynomial
$f(T)=(T^2+aT+q)^2=T^4+(2a) T^3 + (a^2+2q)T^2 + (2a) qT +q^2 \in{\mathbb Z}[T]$.
By R\" uck's result (see Theorem 1.1 of \cite{Ruck} or Lemma 2.1 of \cite{MNH})
we know that $f$ is a $q$-Weil polynomial if and only if 
 $\vert a\vert \leq 2\sqrt{q}$ and $4\vert a\vert \sqrt{q} \leq a^2+4q$.

(1) Let us first consider the case of simple abelian surfaces.
Theorem 2.9 in \cite{MNH} provides necessary and sufficient conditions 
for a $q$-Weil
polynomial $f(T)=T^4+a_1T^3+a_2T^2+qa_1T+q^2 \in{\mathbb Z}[T]$
to be the characteristic polynomial 
of a simple abelian surface defined over ${\mathbb F}_q$.

This theorem classifies such surfaces 
in four
families
 called mixed (M), ordinary (O) and supersingular (SS1) and (SS2).
This classification involves the integer $\Delta=a_1^2-4a_2+8q$,
but 
when $f(T)=(T^2+aT+q)^2$
we have $\Delta=4a^2-4(a^2+2q)+8q=0$.
In our case $\Delta$ is a square and so the cases (M) and (O) are discarded.
It is not possible to fulfill the conditions of the cases
(SS1) or (SS2), except when
$a=0$, $n$ is even and $p\equiv 1\pmod 4$ 
or when
$a=\pm \sqrt{q}$, $n$ is even and $p\equiv 1\pmod 3$.
For these values we check that $f(T)=(T^2+aT+q)^2$
does correspond to a Weil polynomial thanks to R\" uck's result.

Moreover, Howe, Nart and Ritzenthaler have given in Theorem 1.2 of
\cite{HNR} 
necessary conditions on a Weil polynomial 
to be 
the characteristic polynomial of a simple abelian surface defined over ${\mathbb F}_q$
which is not isogenous to a Jacobian.
Since 
 $a_2=a^2+2q$ 
the  condition $a_2<0$ of Table 1.2  of  \cite{HNR} is never satisfied.
 We conclude that 
in the considered cases 
$f(T)$ is indeed the characteristic polynomial of a simple abelian surface $A$ isogenous to a Jacobian.
This concludes the case (1) concerning simple abelian surfaces.

(2) If $(a,q)=1$  
then by Deuring and Waterhouse
the polynomial $T^2+aT+q$ is the
characteristic polynomial of an ordinary elliptic curve $E$ 
and thus $(T^2+aT+q)^2$ is the characteristic polynomial 
of an abelian surface isogenous to $E\times E$.
Moreover, by Theorem 2.3 of  \cite{HNR}, 
$E\times E$ is isogenous to the Jacobian of a curve $X$
 if and only if $a^2-4q$ is neither $-3$ nor $-4$ nor $-7$.
In this case, we know that $L_X(T)=(T^2+aT+q)^2$
and so $X$ is
indeed a
 HP-maximal curve by Proposition \ref{Alpha}.

(3) It remains to consider abelian surfaces 
which are isogenous 
to the product of two supersingular elliptic curves.

(3.1) We first treat the characteristic 2 case.
Maisner and Nart have given  in Table 1 
(respectively Table 3) in
\cite{M-N} 
the list of the 6 (respectively 15) isogeny classes 
of abelian surfaces in characteristic 2 that contain
the product of two supersingular elliptic curves over ${\mathbb F}_{2^n}$ 
when $n$ is odd (respectively even), 
together with
the numbers of ${\mathbb F}_{2^n}$-isomorphism 
classes of supersingular curves of genus 2 
 whose Jacobian
 lies
  in each isogeny class.
Theirs tables involve the couples $(a_1,a_2)=(2a,a^2+2q)$.

The cases in Table 1 and Table 3 for
which the condition $a_2 = \frac{a_1^2}{4}+2q$ holds are the following:
$$
\left\{
\begin{array}{cclll}
(a_1,a_2)&=&(0,2q) & \textrm{for odd } n > 1 & \textrm{(so } a=0),  \\
(a_1,a_2)&=&(\pm 2 \sqrt{2q},4q) & 
 \textrm{for odd }
n >1 & \textrm{(so } a=\pm  \sqrt{2q}), \\
(a_1,a_2)&=&(0,2q) &\textrm{for even } n & \textrm{(so } a=0), \\
(a_1,a_2)&=&(\pm 2\sqrt{q},3q) &\textrm{for even } n & \textrm{(so } a=\pm 
\sqrt{q}),  \textrm{ and} \\ 
(a_1,a_2)&=&(\pm 4 \sqrt{q},6q) &\textrm{for even }  n > 2 & \textrm{(so } a=\pm 
2\sqrt{q}).
\end{array}
\right.
$$
So when $n$ and $a$ fulfill one of these conditions
we know that there exists an elliptic curve $E$ such 
that $E \times E$ is isogenous to the Jacobian of a curve
and admits 
$f(T)=(T^2+aT+q)^2$ as characteristic polynomial.
Since $a$ and $p$ are not coprime, the elliptic curve $E$ is supersingular according
to Theorem 4.1 of \cite{Waterhouse}.

(3.2) We now deal with the characteristic 3 case.
In this context, 
Howe has determined 
all the polynomials that occur as 
characteristic polynomials 
of abelian surfaces and
whose isogeny class contains the Jacobian of a curve of genus $2$
(see Theorem 1.1 in \cite{Howe}).
We can identify among them
the only polynomials
which are square polynomials $f(T)=(T^2+aT+q)^2$: 
 $$
\left\{
\begin{array}{lll}
f(T)=(T^2+q)^2 & \textrm{for odd } n \geq 3 & \textrm{(so } a=0),  \\
f(T)=(T^2+q)^2 & \textrm{for even }n  & \textrm{(so } a=0), \\
f(T)=(T^2\pm \sqrt{q}T+q)^2&\textrm{for even } n & \textrm{(so } a=\pm \sqrt{q}), \\
f(T)=(T^2\pm 
2\sqrt{q}T+q)^2
 &\textrm{for even } n \geq 4 & \textrm{(so }
a=\pm  2 \sqrt{q}). \\ 
\end{array}
\right.
$$
What is left is to show
that in any case 
the corresponding abelian surface is not simple
and that 
$T^2+aT+q$ is indeed
the characteristic polynomial of a supersingular elliptic curve.
To show that the abelian surface is not simple, 
it is sufficient to check that none of the conditions {\sl (1.1)} and {\sl (1.2)} of the theorem is satisfied.
Now, according to 
Deuring-Waterhouse theorem
(Theorem 4.1
in \cite{Waterhouse}), there is a natural bijection between 
the isogenous classes of elliptic curves 
over ${\mathbb F}_q$ and the set of integers $m$ 
with $\lvert m \lvert \leq 2\sqrt{q}$
 that satisfy one of the following conditions:
\begin{enumerate}
\item $(q,m)=1$;
\item $q$ is a square and $m=\pm2\sqrt{q}$;
\item $q$ is a square, $p\not\equiv 1\pmod 3$, and $m=\pm\sqrt{q}$;
\item $q$ is not a square, $p=2$ or $3$, and $m=\pm\sqrt{pq}$;
\item $q$ is not a square and $m=0$; or $q$ is a square, $p\not\equiv 1\pmod 4$, and $m=0$.
\end{enumerate}
And this bijection is such that
$\sharp E({\mathbb F}_q)=q+1-m$ for all curves $E$ from the isogeny class corresponding to $m$.
Furthermore
only the first item does not correspond to supersingular elliptic curves.
So 
we just have
 to check that $\vert a \vert \leq 2 \sqrt{q}$ and 
 that 
 the conditions of one of the cases (2), (3), (4) or (5) are satisfied.


(3.3) The case of characteristic $p>3$ will be a conclusion
of works undertaken  by Howe, Nart and Ritzenthaler. 
Indeed, in this setting,
Theorem 2.4 of \cite{HNR} asserts that 
there is a Jacobian isogenous 
to the product of two supersingular elliptic 
curves $E_1$ and $E_2$ 
of characteristic polynomials $f_{E_1}(T)=T^2-rT+q$
and $f_{E_2}(T)=T^2-sT+q$ if and only if $r^2=s^2$.
When $E_1=E_2$ this condition is certainly satisfied.
Furthermore, if the Jacobian of a curve $X$ is isogenous to $E \times E$,
then the $L$-polynomial of $X$ has form 
$L_X(T)=(1-2\alpha T + qT^2)^2$ 
which implies
by Proposition 2.1 that $X$ is HP-maximal.

So 
we are reduced to discuss under which conditions there exists a supersingular elliptic curve $E$.
The answer is given
by the Deuring-Waterhouse theorem
 (Theorem 4.1 of \cite{Waterhouse}) already quoted above,
namely by the conditions (2), (3), (4) and (5) of the list given in point (3.2).
\end{proof}

Recall that any  curve of genus 1 is a HP-maximal curve (see Remark \ref{remark:elliptic}).
As a consequence, there exists a HP-maximal curve 
whatever the base  field.
What about the question for non-elliptic curves?
The previous theorem brings   the following answer.

\begin{proposition}\label{prop:HPexistence}
Over any finite field   there exists a non-elliptic Hallouin-Perret-maximal curve.

More precisely, 
there exists a Hallouin-Perret-maximal curve of genus 2 defined over ${\mathbb F}_q$ if and only if $q>2$,
and
 there exists a Hallouin-Perret-maximal curve of genus 3 over ${\mathbb F}_2$.
\end{proposition}

\begin{proof}
Maisner, Nart and Howe have provided in \cite{MNH}
a complete description  
of the curves of genus 2  defined over ${\mathbb F}_2$
up to ${\mathbb F}_2$-isomorphism 
and quadratic twist. 
None of the couples $(a_1,a_2)$ given in Table 2 in \cite{MNH} 
satisfies the condition $a_2=a_2/4+2q$,
so there is no  HP-maximal curve of genus 2 over ${\mathbb F}_2$.

Now we assume that $q>2$,
and we first consider the case of characteristic 2.
When $q=4$, Table 7  
of \cite{MNH} shows (first row and last column) 
the existence of a curve
such that $a_1=0$ and $a_2=8=\frac{a_1^2}{4}+2q$ 
so the existence of a HP-maximal curve of genus 2 over ${\mathbb F}_4$
is established.

If $q=2^n$ with $n\geq 3$,
the choice $a=3$ gives
$\vert a\vert\leq2\sqrt{q}$ 
and fulfills the conditions $(a,p)=1$ and $a^2-4q\not\in\{-3,-4,-7\}$
of point (2) of Theorem  \ref{HP_curves_of_genus_2}. 
Thus the polynomial $(T^2+3T+q)^2$
is the characteristic polynomial
of an abelian surface 
(isogenous to $E\times E$ where $E$ is an ordinary elliptic curve) 
whose isogeny class contains the Jacobian of a HP-maximal curve of genus 2 defined over ${\mathbb F}_q$.

Finally 
in the case of characteristic  greater than or equal to $3$
one can take $a=2$, so once again the conditions 
of point (2) of Theorem  \ref{HP_curves_of_genus_2} are fulfilled,
and so there exists a HP-maximal curve of genus 2 defined over ${\mathbb F}_q$.

For the remaining case $q=2$, 
the LMFDB database (see \cite{LMFDB})  provides a genus-$3$ HP-maximal curve,
namely\footnote{\url{https://www.lmfdb.org/Variety/Abelian/Fq/3/2/ad_j_an}}
the pointless curve defined
by the equation 
$x^4+x^2y^2+x^2yz+x^2z^2+xy^2z+xyz^2+y^4+y^2z^2+z^4=0$ 
which admits $14$ points over $\mathbb{F}_{4}$.
\end{proof}

{\bf Acknowledgement.} The authors would like to thank Christophe Ritzenthaler
for useful comments
 and for pointing out 
 the curve of Remark \ref{remark:christophe}.
 They are also grateful to the referee for the careful reading and useful comments.

\printbibliography
\end{document}